\numberwithin{equation}{section}
\newtheorem{thm}{Theorem} 
\newtheorem{lem}{Lemma} 
\newtheorem{cor}[thm]{Corollary}
\newtheorem{prop}[thm]{Proposition}  
\theoremstyle{definition}
\def\bye{\end{document}} \def\by{\end{proof}\end{document}}
\def\R{{\mathbb R}}
\def\N{{\mathbb N}}
\def\diam{\operatorname{diam}}  
\def\tr{\operatorname{tr}}
\newcommand{\overbar}[1]{\mkern 1.9mu\overline{\mkern-1.9mu#1\mkern-0.1mu}
\mkern 0.1mu}
\def\mid{\,:\,}
\def\dist{\mathrm{dist}}
\def\supp{\mathrm{supp}\,}
\def\USC{\operatorname{USC}}
\def\LSC{\operatorname{LSC}}
\DeclareMathOperator{\argmin}{arg\, min}
\DeclareMathOperator{\EXP}{\mathbb{E}}
\DeclareMathOperator{\PROB}{\mathbb{P}}
\DeclareMathOperator{\e}{\mathrm{e}}
\DeclareMathOperator{\di}{\mathrm{d}}
\def\1{\mathbf{1}}
\def\cS{\mathcal{S}}
\def\Lip{\operatorname{Lip}}
\def\L{\mathcal{L}}
\author{Hitoshi Ishii$^{1,*}$ and Panagiotis E. Souganidis${}^{2}$}
\thanks{${}^*$ Corresponding author}
\thanks{${}^{1}$ Faculty of Education and Integrated Arts and Sciences, Waseda University, Nishi-Waseda, Shinjuku, Tokyo 169-8050, Japan / Faculty of Science, King Abdulaziz University, P. O. Box 80203 Jeddah, 21589 Saudi Arabia. 
Partially supported by the KAKENHI \#21224001,
\#23340028 and \#23244015, JSPS} 
\thanks{${}^{2}$ Department of Mathematics, The University of Chicago, 5734 S. University Avenue, Chicago, IL 60657, USA. 
Partially supported by the National Science Foundation grants DMS-0901802 and DMS-1266383}
\keywords{parabolic equation, asymptotic behavior, metastability, stochastic perturbation}
\subjclass[2010]{Primary 35B40; Secondary 35K20, 37H99}
\email{hitoshi.ishii@waseda.jp (Hitoshi Ishii),
souganidis@math.uchicago.edu (Panagiotis E. Souganidis)}
\date{\today}
\def\bbS{\mathbb{S}}  
\title{Metastability for parabolic equations 
with drift: part 1}
\begin{document}
\maketitle
\begin{abstract}We provide a self-contained analysis, based entirely on pde methods, of the
exponentially long time behavior of solutions to linear uniformly parabolic
equations which are small perturbations of
a transport equation with vector field having a globally stable point. We show that the solutions converge to a constant, which is either the initial value at the stable point or the boundary value at the minimum of the associated
quasi-potential. This work extends previous results of Freidlin and
Wentzell and Freidlin and Koralov and applies also to semilinear elliptic
pde. 
\end{abstract}

\tableofcontents    

\section{Introduction}\label{sec:int}

\noindent In this paper we provide a self-contained  analysis, based on entirely pde methods, of the long time behavior (at scale $\exp \lambda/\varepsilon)$, as $\varepsilon\to 0$, of the solution $u^\varepsilon=u^\varepsilon(x,t)$ of the the parabolic equation 
\begin{equation}\label{eq:int1}
u_t^\varepsilon=L_\varepsilon u^\varepsilon\ \ \ \text{ in } \ Q:=\varOmega\times(0,\,\infty),  
\end{equation}
where, for $\phi$ smooth,   the elliptic operator $L_\varepsilon$ is given by 
$$
L_\varepsilon\phi(x)
:=\varepsilon\tr[a(x)D^2\phi(x)]+b(x)\cdot D\phi(x)
$$
and the initial-boundary condition
\begin{equation}\label{eq:boundary}
u^\varepsilon= g \  \text{ on } \  \partial_{\mathrm{p}} Q:= (\overbar \varOmega \times \{0\}) \cup (\partial \varOmega \times (0,\infty)).
\end{equation}
 
\noindent Here $\varepsilon>0$,  $\Omega \subset \R^n$, $a(x)=(a_{ij}(x))_{1\leq i,j\leq n}\in \bbS^n$, the space of $n\times n$ symmetric matrices, is positive, 
``$\tr$'' and ``$\cdot$'' denote the trace of square matrices 
and the inner product in Euclidean spaces respectively  and the vector field $b$ has some $x_0 \in \varOmega$ as an asymptotically stable equilibrium. Exact assumptions are stated below. 
\smallskip

\noindent Roughly speaking the result states that there exists $m_0>0$ and some $x^*\in \partial \varOmega$ such that, as $\varepsilon\to 0$ and locally uniformly in $\varOmega$, 
$$ u^\varepsilon(x, \lambda/\varepsilon) \to g(x_0)  \ \text{ if } \ \lambda < m_0 \  \text{ and }  \   u^\varepsilon(x, \lambda/\varepsilon) \to g(x^*)  \ \text{ if } \ \lambda > m_0.$$
 
\noindent Our work extends previous results of Freidlin and Wentzell \cite[Chap. 4]{FW2012} (see also Freidlin and Koralov \cite{FK2010,FK2012}) who studied, using probabilistic 
techniques, the asymptotic behavior of the $u^\varepsilon$'s  for $a=I$, the identity matrix of order $n$. 
\smallskip

\noindent To 
make precise statements as well as to provide an interpretation of the results in terms of the metastability properties of random perturbations of some ordinary differential equations (ode for short), 
we introduce next the assumptions (A$1$)--(A$5$) which will hold throughout. In what follows, 
$B_r(x) $ is the open ball in $\R^n$ centered at the $x$ with radius $r$ and $B_r:=B_r(0)$. Moreover to simplify the notation throughout the paper we take $x_0=0$.

\begin{itemize} 
\item[(A1)] (Regularity) The symmetric matrix  $a(x)$ and the vector field $b(x)$ are Lipschitz continuous in $\R^n$. 
\item[(A2)](Uniform ellipticity) There exists a constant $\theta\in (0,\,1)$ such that
\[
\theta I\leq a(x)\leq \theta^{-1} I \  \text{ for all } x\in\R^n
\]
\item[(A3)] The set $\varOmega$ is a bounded, open, connected subset of $\R^n$ with $C^1$-boundary.
\end{itemize}
\smallskip

\noindent We consider the dynamical system generated by the ode
\begin{equation}\label{ds}
\dot X=b(X),
\end{equation}
where $\dot X$ denotes the derivative of the function $t\to X(t)$. The solution of  \eqref{ds} 
with  initial condition $X(0)=x\in\R^n$ is denoted by $X(t;x)$. 
The assumptions on $b$ are: 

\begin{itemize} 
\item[(A4)](Global asymptotic stability)\\
(i)  For any $x\in\R^n$, \ ${\displaystyle \lim_{t\to \infty}X(t;x)=0}$.\\
(ii)  For any $\delta>0$ there exists $r>0$  such 
that,  for all $x\in B_r$ and  $t\geq 0$,  
\ $X(t;x)\in B_\delta$.\\
\item[(A5)] $b(x)\cdot\nu(x) <0$ \ on  \ $\partial\varOmega$, 
where $\nu(x)$ is the 
exterior unit normal at $\,x\in\partial\varOmega$. 
\end{itemize}
\smallskip

\noindent We remark that (A4) implies that 
$$b(0)=0 \ \text{ and } \ b\not=0 \ \text{in} \  
\R^n\setminus\{0\},$$ 
and that (A5) ensures  
that  $\varOmega$ (resp.  $\overbar\varOmega$) 
is positively invariant under the flow\\ $X:\R\times\R^n\to\R^n$, that is, for all $(x,t)\in\varOmega\times[0,\,\infty)$ 
(resp.  $(x,t)\in\overbar\varOmega\times[0,\,\infty)$),
$$X(t;x)\in\varOmega \ \  (\text{resp. } X(t;x)\in\overbar\varOmega).$$ 

\begin{center}
\includegraphics[height=5cm]{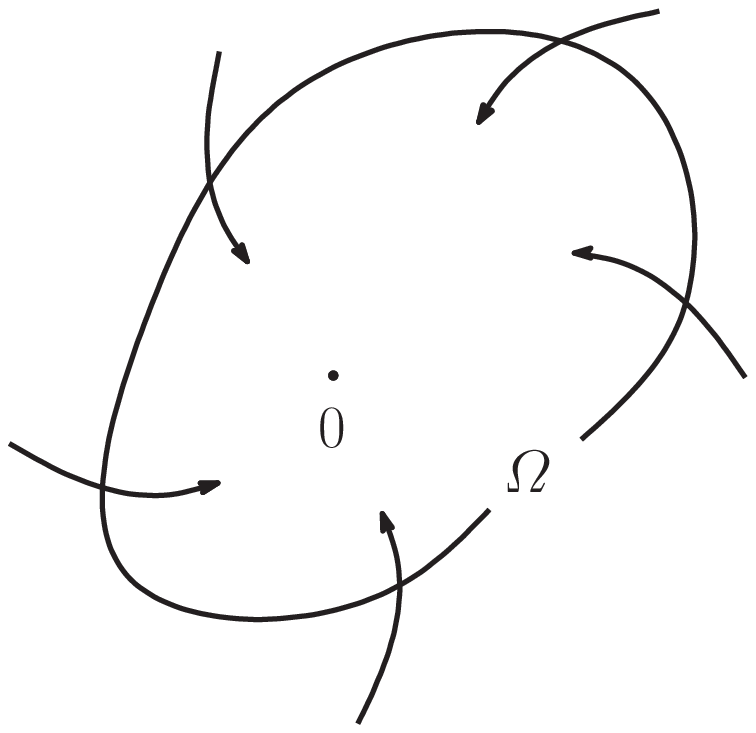}
\end{center}




\noindent The asymptotic behavior of the $u^\varepsilon$'s, as $\varepsilon\to 0$,   is closely related to 
the behavior, as $\varepsilon\to 0$, of the following random perturbation of \eqref{ds} 
\begin{equation}\label{eq:rp1}
\di X_t^\varepsilon=b(X_t^\varepsilon)\di t+\sqrt{2\varepsilon}\sigma (x)\di W_t,
\end{equation}
where $(W_t)_{t \in \R}$ is a standard $n$-dimensional Brownian motion and the matrix $\sigma$ is the square root of $a$, that is, $a=\sigma\sigma^t$, 
and is, in view of (A1) and (A2), also Lipschitz continuous in $\R^n$. In what follows  we write $X_t^\varepsilon(x)$ for the solution of \eqref{eq:rp1} with initial condition $x$. 
\smallskip

\let\gep=\varepsilon
\noindent For each $x\in\varOmega$, let $\tau^\varepsilon_x$ be 
the first exit time of $X_t^\varepsilon(x)$ from $\varOmega$, that is 
\[
\tau^\gep_x:=\inf\{t\geq 0\mid X_t^\gep\not\in\varOmega\}. 
\]
Consider the Hamilton-Jacobi equation
\begin{equation}\label{hj1}
H(x,Du)=0 \ \ \ \text{ in }\ \varOmega,
\end{equation}
where, for $x,p \in \R^n$,
\begin{equation}\label{hj2}
H(x,p)=a(x)p\cdot p+b(x)\cdot p,  
\end{equation}
let $V\in C(\overbar\varOmega)$ be the maximal sub-solution of \eqref{hj1} satisfying 
\ $V(0)=0$, and set \ $m_0=\min_{\partial\varOmega}V$ ---throughout the paper when we refer to solutions of 
Hamilton-Jacobi and ``viscous'' Hamilton-Jacobi equations we mean viscosity solutions. 
\smallskip

\noindent For $a=I$ 
the results of \cite[Chap. 4]{FW2012} state roughly that,  
in average, for any $x\in\varOmega$ and as $\varepsilon\to 0$, 
\begin{equation}\label{eq:rp2}
\tau^\gep_x \approx \e^{m_0/\gep}  \ \text{ and } 
X_t^\varepsilon(x)  \text{ exits from } \ \varOmega \ \text{ near } \ \argmin(V|\partial\varOmega),
\end{equation}
where  
$\argmin(V|\partial\varOmega)$ is the subset of $\partial\varOmega$ where $V$ attains its minimum over 
$\partial\varOmega$. 
\smallskip

\noindent A simple  example that gives an  idea for what is happening is to take 
$a=I$,  $b(x)=-x$ and $\varOmega=B_1$. In this case 
$u(x)=|x|^2/2$ obviously satisfies $H(x,Du(x))=|x|^2-x\cdot x=0$ and, hence, 
$V(x)=|x|^2/2$.  It also follows from elementary stochastic calculus considerations  that, for every $x\in B_1$,  $X^\varepsilon_t(x)= x\exp( {-t}) + \sqrt{2\varepsilon} \int_0^t \exp(s-t) dW_s$. 
\smallskip


\noindent Given $H$ as in \eqref{hj2},  let $L\in C(\R^{2n})$ be its convex conjugate, that is 
$$L(x,\xi):=\frac 14 a(x)^{-1}(\xi-b(x))\cdot (\xi-b(x)),$$
where $a(x)^{-1}$ denotes the inverse matrix of $a(x)$. 
\smallskip

\noindent Following Freidlin-Wentzell \cite{FW2012},  we introduce the quasi-potential $V_\varOmega$ on $\overbar\varOmega\times\overbar\varOmega$ 
\begin{equation}\label{eq:mt1}
\begin{aligned}
V_\varOmega(x,y):=\inf\Big\{\int_0^T L(X(t),\dot X(t))\di t: \ &T>0,\ X\in \Lip([0,\,T],\,\overbar\varOmega), 
\\ 
&X(0)=x,\ X(T)=y \Big\},
\end{aligned}  
\end{equation}
where $\Lip([0,\,T],\overbar\varOmega)$ is the set of all Lipschitz continuous functions $t \mapsto  
X(t)$
such that $X(t)\in\overbar\varOmega$ for all $t\in[0,\,T]$. 
\smallskip

\noindent Next we define the function $V$ and the constant $m_0$ by
$$ V(y):=V_\varOmega(0,y) \  \  \text{and } \ \  m_0=\min_{\partial\varOmega}V.$$ 


\smallskip

\noindent Our main theorem is as follows.

\begin{thm} \label{thm:mt1}
Assume (A1)-(A5) and $g\in C(\overbar\varOmega)$ . For each $\varepsilon>0$, let $u^\varepsilon\in C(\overbar Q)\cap C^{2,1}(Q)$ be the solution of 
\eqref{eq:int1}, \eqref{eq:boundary}. 
\begin{enumerate}
\item[(i)] Fix $\lambda\in(0,\,m_0)$. For any compact subset $K$ of  $\varOmega$ and $\sigma (\varepsilon) >0$ 
such that  $\sigma(\varepsilon) \leq \exp {\lambda/\varepsilon}$ and 
$\lim_{\varepsilon\to 0+} \sigma(\varepsilon)=\infty$, 
\begin{equation}
\lim_{\varepsilon\to 0+}u^\varepsilon(\cdot ,t)=g(0)\quad \text{ uniformly on }\  K\times[\sigma(\varepsilon),\,\e^{\lambda/\varepsilon}].   
\label{eq:mt3}
\end{equation}
\item[(ii)] Assume that $g=g(0)$ on $\argmin(V|\partial\varOmega)$.
For any  compact subset  $K$ of $\varOmega\cup\argmin(V|\partial\varOmega)$ and $\sigma (\varepsilon) >0$ 
such that 
$\lim_{\varepsilon\to 0+} \sigma(\varepsilon)=\infty$,
\begin{equation}
\lim_{\varepsilon\to 0+}u^\varepsilon(\cdot,t)=g(0)\quad \text{ uniformly on } K\times[\sigma(\varepsilon), \infty). 
\label{eq:mt4}
\end{equation}
\item[(iii)]  Fix  $\lambda\in(m_0,\,\infty)$ and assume that $g=g_0$ on $\argmin(V|\partial\varOmega)$ for some constant 
$g_0$.  Then, for every  compact subset $K$ of $\varOmega\cup\argmin(V|\partial\varOmega)$, 
\begin{equation}\label{eq:mt5}
\lim_{\varepsilon\to 0+}u^\varepsilon(\cdot,t)=g_0\quad\text{ uniformly on } \ K\times[\e^{\lambda/\varepsilon},\,\infty). 
\end{equation}
\end{enumerate}
\end{thm}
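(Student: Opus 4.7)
The proof naturally splits into two time regimes: a short time $t\sim\sigma(\varepsilon)$, in which the parabolic smoothing and attraction to $0$ bring $u^\varepsilon$ close to $g(0)$ uniformly on compact subsets of $\varOmega$; and an exponential time $t\sim\e^{\lambda/\varepsilon}$, in which the competition between interior stability and boundary leakage---both controlled by the same exponential rate $\e^{-m_0/\varepsilon}$---determines the limit. My approach is to treat both regimes by the parabolic comparison principle, using time-dependent barriers built from a smooth approximation $V_\delta$ of $V$ that satisfies $V_\delta(0)=0$, $V_\delta\ge m_0-\delta$ on $\partial\varOmega$, $V_\delta\le V+o_\delta(1)$ near $\argmin(V|\partial\varOmega)$, and the classical Hamilton--Jacobi inequality $H(x,DV_\delta)\le\eta(\delta)\to 0$; such $V_\delta$ exists because $V$ is the maximal viscosity subsolution of $H(x,DV)=0$ with $V(0)=0$, and standard regularization arguments (sup-convolution followed by mollification, with a local correction near $\argmin V$) apply.

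\smallskip

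For (i), the short-time equilibration $u^\varepsilon(\cdot,\sigma(\varepsilon))\to g(0)$ on compact $K\subset\varOmega$ is obtained by comparing $u^\varepsilon$ with the frozen profile $g(X(\cdot\,;x))$ plus a diffusive correction of order $\sqrt{\varepsilon\sigma(\varepsilon)}$, invoking $X(\sigma(\varepsilon);x)\to 0$ from (A4). For the range $t\in[\sigma(\varepsilon),\e^{\lambda/\varepsilon}]$ with $\lambda<m_0$, I would decompose $u^\varepsilon=U_1^\varepsilon+U_2^\varepsilon$, where $U_1^\varepsilon$ solves $u_t=L_\varepsilon u$ with interior datum $g$ and $0$ on $\partial\varOmega$, and $U_2^\varepsilon$ with $0$ interior datum and boundary datum $g$. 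A Perron--Frobenius-type bound obtained by comparison with $\e^{-V_\delta(x)/\varepsilon}$ yields the asymptotics $\varepsilon\log\mu^\varepsilon\to -m_0$ for the principal eigenvalue $\mu^\varepsilon$ of $-L_\varepsilon$ on $\varOmega$ with Dirichlet data; hence $U_1^\varepsilon(\cdot,t)$ remains uniformly close to $g(0)$ on $K$ for $t\le\e^{\lambda/\varepsilon}$, while $U_2^\varepsilon(\cdot,t)\le C\|g\|_\infty\mu^\varepsilon t=O(\e^{(\lambda-m_0)/\varepsilon})=o_\varepsilon(1)$. For (ii), the extra assumption $g=g(0)$ on $\argmin(V|\partial\varOmega)$ permits a localized comparison that replaces the $\|g\|_\infty$ factor in $U_2^\varepsilon$ by $\|g-g(0)\|_{L^\infty(\partial\varOmega\setminus N_\rho)}\,\e^{-\gamma(\rho)/\varepsilon}$ plus a $\rho$-modulus coming from continuity of $g$ near $\argmin V$; this extends the estimate to all $t\ge\sigma(\varepsilon)$ and to compact $K\subset\varOmega\cup\argmin(V|\partial\varOmega)$.

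\smallskip

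For (iii), the argument is dual: for $t\ge\e^{\lambda/\varepsilon}$ with $\lambda>m_0$, the eigenvalue asymptotics give $U_1^\varepsilon(\cdot,t)\to 0$; separately, $U_2^\varepsilon(\cdot,t)\to u^\varepsilon_\infty$, the unique solution of $L_\varepsilon u=0$ in $\varOmega$ with $u=g$ on $\partial\varOmega$; and finally $u^\varepsilon_\infty\to g_0$ uniformly on compact $K\subset\varOmega\cup\argmin(V|\partial\varOmega)$, because the harmonic measure associated to $L_\varepsilon$ from interior points concentrates on $\argmin(V|\partial\varOmega)$ as $\varepsilon\to 0$, a large-deviations fact proved by two-sided comparison with $\e^{-V_\delta/\varepsilon}$-type barriers, and $g\equiv g_0$ there. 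The main obstacle throughout is the construction of $V_\delta$: since $V$ is only Lipschitz, with gradient singularities at $0$, on the cut locus of the control problem defining $V_\varOmega$, and at $\argmin(V|\partial\varOmega)$, simultaneously securing $H(x,DV_\delta)\le\eta(\delta)\to 0$, $V_\delta\ge m_0-\delta$ on $\partial\varOmega$, and the matching $V_\delta\le V+o_\delta(1)$ near $\argmin V$ is delicate and must respect the boundary geometry from (A3) and the inflow condition (A5). A secondary obstacle specific to (iii) is the quantitative harmonic-measure concentration, which requires careful analysis of the Dirichlet spectral problem and uses uniform ellipticity (A2) essentially.
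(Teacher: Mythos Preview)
Your spectral route---splitting $u^\varepsilon=U_1^\varepsilon+U_2^\varepsilon$ by interior versus lateral data and invoking the principal-eigenvalue asymptotics $\varepsilon\log\mu^\varepsilon\to-m_0$---is genuinely different from the paper's, which never touches eigenvalues or eigenfunctions. The paper works entirely with time-dependent comparison functions built from \emph{two} distinct auxiliary constructions: a smooth $W_r\approx V$ with $H(x,DW_r)\le-\eta$ on $\varOmega\setminus B_r$, giving the super-solution $\e^{(W_r-\mu)/\varepsilon}+R_\varepsilon t$ that controls $u^\varepsilon$ on $B_r\times[0,\e^{\lambda/\varepsilon}]$ for $\lambda<m_0$; and a semiconcave $W$ with $H(x,-DW)\ge\eta$ on all of $\varOmega$ and $0<\min W\le\max W<\lambda$ for $\lambda>m_0$, built not from $V$ but from $U(x)=\min_{y\in\partial\varOmega}V_\varOmega(x,y)$, which yields a super-solution forcing decay on $[\e^{\lambda/\varepsilon},\infty)$ when the lateral data vanish. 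Passage from $B_r$ to an arbitrary compact $K\subset\varOmega$ is done by a half-relaxed-limit argument for the transport limit $u_t=b\cdot Du$ (the ``asymptotic constancy'' step), after which a bootstrapping restarts the barrier argument from a fixed time $T$. Your route is closer to the probabilistic picture and would yield the eigenvalue asymptotics as a byproduct; the paper's is more elementary and entirely self-contained.

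That said, your outline has real gaps. The short-time correction ``of order $\sqrt{\varepsilon\sigma(\varepsilon)}$'' fails for the stated range of $\sigma(\varepsilon)$: since one may take $\sigma(\varepsilon)=\e^{\lambda/\varepsilon}$, this quantity blows up; the naive characteristic comparison with $g(X(t;x))$ is only good on bounded time intervals. More seriously, the asymptotics $\varepsilon\log\mu^\varepsilon\to-m_0$ alone does \emph{not} imply that $U_1^\varepsilon$ remains close to $g(0)$ on $K\times[\sigma(\varepsilon),\e^{\lambda/\varepsilon}]$. For that you also need (a) the principal eigenfunction $\phi_1^\varepsilon\to 1$ locally uniformly in $\varOmega$, (b) the adjoint eigenfunction to concentrate at $0$ so that the projection coefficient tends to $g(0)$, and (c) a spectral-gap estimate guaranteeing higher modes are negligible after time $\sigma(\varepsilon)$; none of these follow from the eigenvalue rate, and each needs its own barrier argument. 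Finally, a single $V_\delta$ with $H(x,DV_\delta)\le\eta(\delta)$ cannot do double duty: since $H(x,-p)\ge -H(x,p)$, the best you get from it is $H(x,-DV_\delta)\ge -\eta(\delta)$, which is useless for the lower eigenvalue bound and for the long-time decay in (iii). That direction requires a \emph{strict super-solution} of $H(x,-Du)=0$, and such a function does not arise from regularizing $V$ (the maximal sub-solution of $H(x,Du)=0$); it must be built from the dual quantity $U$, as the paper does. Your program is salvageable, but it needs two separate auxiliary functions and a genuine analysis of the eigenfunction pair, not just the eigenvalue.
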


\noindent Next we use the assertions of the theorem to make precise the statement of \eqref{eq:rp2} for the general random perturbation \eqref{eq:rp1}.  
The solution $u^\varepsilon$ of \eqref{eq:int1}, 
\eqref{eq:boundary} is 
given by  
\[
u^\varepsilon(x,t)=\EXP_x g(\min (t,\tau^\varepsilon_x)),
\] 
where $\EXP_x$ denotes the expectation conditioned on $X_0^\varepsilon=x$.
\smallskip

\noindent For any closed subset $\varGamma$ of $\partial\varOmega$,  let  $g:=\1_\varGamma$ be its characteristic function, 
which, of course,  is not continuous on $\varGamma$ unless $\varGamma=\emptyset$. For this choice of $g$, 
\[
u^\varepsilon(x,t)=\PROB_x(\tau^\varepsilon_x<t,\, X_{\tau^\varepsilon_x}^\varepsilon\in\varGamma),
\] 
where $\PROB_x$ denotes the probability conditioned on $X_0^\varepsilon=x$. 
Note that the  argument here is heuristic 
in the sense that we use  \eqref{eq:mt3} and \eqref{eq:mt5} 
even  for a discontinuous $g$.
\smallskip

\noindent Assume that $\argmin(V|\partial\varOmega)\subset \varGamma$, which 
implies that $g(0)=0$ and $g(x)=1$ for all $x\in\argmin(V|\partial\varOmega)$.   
Then \eqref{eq:mt3} and \eqref{eq:mt5} roughly say that, 
for any $\delta>0$ and any compact $K\subset\varOmega$, as $\varepsilon\to 0$, 
\begin{equation}\label{eq:mt6}
\begin{cases}
\PROB_x\left(\tau^\varepsilon_x<t,\, X_{\tau^\varepsilon_x}^\varepsilon\in\varGamma\right) \ \to \ 0 
&\text{ uniformly on } \ K\times[\e^{\delta/\varepsilon},\,
\e^{(m_0-\delta)/\varepsilon}], \\[3pt] 
\PROB_x\left(\tau^\varepsilon_x<t,\, X_{\tau^\varepsilon_x}^\varepsilon\in\varGamma\right) \ \to \ 1 &\text{ uniformly on } \ K\times
[\e^{(m_0+\delta)/\varepsilon},\,\infty).
\end{cases}
\end{equation}

\noindent It follows from \eqref{eq:mt6}, with $\varGamma=\partial\varOmega$, that for any $\delta>0$ and any compact $K\subset\varOmega$, as $\varepsilon\to 0$, 
\[
\PROB_x\left(\e^{(m_0-\delta)/\varepsilon}<\tau^\varepsilon_x<\e^{(m_0+\delta)/\varepsilon}
\right) \ \to \ 1 \ \ \ \text{ uniformly  on } \  K.  
\]
Finally, this last observation and \eqref{eq:mt6}, with $\varGamma=\argmin(V|\partial\varOmega)$, 
yield that,  for any $\delta>0$ and any compact $K\subset\varOmega$, as $\varepsilon\to 0$,  
\[
\PROB_x\left(\e^{(m_0-\delta)/\varepsilon}<\tau^\varepsilon_x<\e^{(m_0+\delta)/\varepsilon},\, 
X_{\tau^\varepsilon_x}^\varepsilon\in \argmin(V|\partial\varOmega)\right)
\ \to \  1 \ \ \ \text{ uniformly on } \ K.
\]
This is a crude probabilistic interpretation of Theorem \ref{thm:mt1}, which may be used as 
 a justification of \eqref{eq:rp2}.   
\smallskip

\noindent We continue with a brief discussion of the main steps of the proof of Theorem \ref{thm:mt1} as well as an outline of the paper.  
\smallskip

\noindent It turns out,  
and this is explained in Section \ref{sec:qp}, that,
if $u(x):=V_\varOmega(x,y)$ and $v(y):=V_\varOmega(x,y)$, then 
$H(x,-Du)=0 \ \text{ in }\ \varOmega\setminus\{y\} 
 \ \text{ and } \ 
H(y,Dv)=0 \  \text{ in } \ \varOmega\setminus\{x\}.$
\smallskip

\noindent The second property is used in Section \ref{sec:qp} to find, for each $r>0$, a smooth approximation $W_r\in C^2(\overbar \varOmega)$ of $V=V_\varOmega(0,\cdot)$ 
such that   $H(x,DW_r)\leq -\eta \ \text{ in } \ \varOmega\setminus B_r$ 
for some $\eta>0$.
This approximation is used in Section \ref{sec:sts} to analyze the asymptotic behavior of  the $u^\varepsilon$'s in a ``smaller time scale'', that is to prove in Theorem \ref{thm:sts1} that, if  $\lambda>0$ is such that 
$\{V\leq \lambda\}\subset\{g\leq 0\}$, 
then, for each $\delta>0$, there exists $r>0$ such that $\lim_{\varepsilon\to 0+} (u^\varepsilon-\delta)_+=0 \ \text{ uniformly on } \ B_r\times [0,\e^{\lambda/\varepsilon}].$
\smallskip

\noindent The first property of the quasi-potential is used in Section \ref{sec:lts} to find (Proposition  \ref{prop:lts1}), for 
$\lambda>m_0$,  a semiconcave function 
$W\in \Lip(\overbar\varOmega)$  
such that  $ 0<\min_{\overbar\varOmega}W\leq \max_{\overbar\varOmega}W<\lambda$ and 
$H(x,-DW)\geq \eta\,$ 
for some $\eta>0$.
The existence of such $W$ 
allows us to study the asymptotics of the $u^\varepsilon$'s at a ``larger time scale'', that is to prove in Theorem \ref{thm:lts1} that,  for fixed $\lambda>m_0$, any solution $u^\varepsilon\in C(\overbar Q)\cap C^{2,1}(Q)$ of 
\eqref{100} below satisfies $\,
\lim_{\varepsilon\to 0+}u^\varepsilon=0\,$ 
uniformly on $\, \overbar\varOmega\times[\e^{\lambda/\varepsilon},\,\infty).$ To use this result in the proof of the main theorem, we analyze in Section  \ref{sec:sp}
the stationary version of \eqref{eq:int1}, that is the boundary value problem \eqref{eq:sp1}.
\smallskip

\noindent  In Section \ref{sec:ac} we extend  the convergence result  of Theorem \ref{thm:sts1} from convergence on $B_r \times [0,\exp{\lambda/\varepsilon}]$ to uniform convergence on $K \times [T, \exp{\lambda/\varepsilon} - \tau_0]$, for some large $T(r)>0$ and any compact subset $K$ of $\varOmega$ and $\tau_0 >0$. This  ``asymptotic constancy'' is based on the rigorous justification of the fact that the limit, as $\varepsilon\to 0$, of \eqref{eq:int1} is the transport equation $u_t=b\cdot D_xu$.  \smallskip

\noindent The proof of the main theorem is the topic of Section \ref{sec:pmt}. In Section \ref{sec:spe}  we present a generalization of Theorem \ref{thm:mt1} 
to a class to semilinear parabolic equations. Finally, in the Appendix we present a new existence and uniqueness result of viscosity solutions for the class of the semilinear equations considered in Section \ref{sec:spe}. 
\smallskip

\noindent
\emph{Notation and terminology.} We write 
$\overbar  B_R(y,s)$  for the closure of $B_R(y,s)$. We denote by $a\vee b$ and $a\wedge b$ the larger and smaller of $a,\,b\in\R$
respectively and, for $a\in\R$,  $a_+:=a\vee 0$ and $a_-:=(-a)_+$.
For $A\subset\R^m$ and $B\subset\R^k$,  $\Lip(A,B)$ denotes the set of all Lipschitz continuous functions $f:A\to B$ and $\Lip(A)=\Lip(A,\R)$.
For any function $f:A\to B$ we write $\|f\|_{\infty,A}$ for 
$\sup_{x\in A}|f(x)|$ and, if $B=\R$, $\{f<\alpha\}$ (resp. $\{f\leq \alpha\}$) 
for $\{x: f(x)<\alpha\}$ (resp. $\{x: f(x)\leq \alpha\}$). 
Let $f: A\to B$, and let $\{f_\varepsilon\}_{\varepsilon>0}$ 
and $\{K_\varepsilon\}_{\varepsilon>0}$ be collections of functions $f_\varepsilon: A\to B$ and of subsets $K_\varepsilon\subset A$. 
We say that 
$\lim_{\varepsilon\to 0+} f_\varepsilon=f \  \text{ uniformly on }K_\varepsilon$, 
if \ $\lim_{\varepsilon\to 0+}\|f_\varepsilon-f\|_{\infty,K_\varepsilon}=0$. 
\smallskip

\noindent Throughout the paper sub- and super-solutions should be taken to be in the Crandall-Lions viscosity sense. 
In this direction, given $S\subset\R^n$, and  $u:S \to \R^n$ and 
$F, G: S\times \R\times\R^n\times \bbS^n$
we say that the inequality 
\[F(x,u,Du,D^2u)\leq G(x,u,Du,D^2u) \ \ \ \text{ in } \ S
\]
holds in the (viscosity) sub-solution (resp. super-solution) sense if 
we have 
\[
F(x,u(x),D\phi(x),D^2\phi(x))\leq G(x,u(x),D\phi(x),D^2\phi(x))
\] 
for all $(x,\phi)\in S\times C^2(S)$ such that $u-\phi$ takes a maximum (resp. minimum ) 
at $x$. We also use the term ``in the (viscosity) sub-solution sense''
or ``in the (viscosity) super-solution sense'' for strict inequalities, reversed inequalities and sequences of inequalities.

\section{The Quasi-potential and a smooth approximation}\label{sec:qp}
\noindent Here we recall some classical facts about the quasi-potential and then we construct a smooth approximation, which
plays an important role in the rest of the analysis.
\smallskip

\noindent It is well-known from the theory of viscosity solutions (\cite{Li1982, Ba1994, BC1997, 
CIL}) as well the  weak KAM theory (\cite{Fa, FS2005}, 
\cite[Prop. 7.2]{Mi2008})  that 
$V_\varOmega\mid \overbar\varOmega\times\overbar\varOmega\to\R$ is given by
\begin{equation}\label{eq:qp1}
V_\varOmega(x,y)=\sup\{\psi(x)-\psi(y):  \psi\in\cS^-(\varOmega)\},
\end{equation}
where $\cS^-(\varOmega)$ denotes the set of all sub-solutions $\psi\in C(\overbar\varOmega)$ of 
\[
H(x,-D\psi)=0 \ \ \ \text{ in }\ \varOmega;
\]
note that the coercivity of the Hamiltonian implies that $\cS^-(\varOmega)\subset\Lip(\overbar\varOmega)$. 
\smallskip

\noindent Let $u(x):=V_\varOmega(x,y)$ and $v(y):=V_\varOmega(x,y)$. It is immediate that 
$u \in \cS^-(\varOmega)$, that is,  $u$ is a sub-solution of \ $H(x,-Du(x))\leq 0$ \ in $\varOmega$, 
and, for any $\psi\in\Lip(\overbar\varOmega)$,  
\[
\psi\in\cS^-(\varOmega) \ \ \ \text{if and only if } \ \ \ H(x,-D\psi)\leq 0 \ \ \text{ a.e..}
\] 
Moreover, the function $v(y):=V_\varOmega(x,y)$ is a sub-solution of 
\[
H(y,Dv(y))\leq 0 \ \ \ \text{ in } \ \varOmega.
\]
Finally,
\[
H(x,-Du)=0 \ \text{ in }\ \varOmega\setminus\{y\} 
 \ \text{ and } \ 
H(y,Dv)=0 \  \text{ in } \ \varOmega\setminus\{x\}.
\]

\noindent It is obvious from \eqref{eq:mt1} that, for all \  $x,y\in\overbar\varOmega$ \  and  \  $t\geq 0$,  \ $V_\varOmega(x,y)\geq 0$ \ 
and  \  $V_\varOmega(x,X(t;x))=0$. 
Moreover, letting  $t\to\infty$,  it follows that,  for all \ $x\in\overbar\varOmega$, \ $V_\varOmega(x,0)=0$.  It is also easily seen from the definition of $V_\varOmega$ that, for all  $ x,y,z\in\overbar\varOmega$,
\[
V_\varOmega(x,y)\leq V_\varOmega(x,z)+V_\varOmega(z,y).
\]

\noindent Next we state a technical fact that we need for the construction of the above mentioned auxiliary function.

\begin{prop}\label{prop:qp1}There exists $\psi\in C(\overbar\varOmega\setminus\{0\})$ 
such that, for all $r>0$,
\vskip.075in
\quad  $\psi\in\Lip(\overbar\varOmega\setminus B_r), \  b\cdot D\psi= -1$ \ a.e. in  $\varOmega\setminus\{0\}$, \  and \  $\lim_{x\to 0}\psi(x)=-\infty.$
\end{prop}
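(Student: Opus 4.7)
My plan is to set $\psi(x) := -t_0(x)$ where
\[
t_0(x) := \inf\{\,s\geq 0 : X(-s;x) \notin \overbar\varOmega\,\}
\]
is the backward-exit time of $x$ from $\overbar\varOmega$ under the flow $\dot X=b(X)$ (well-defined in both time directions since $b$ is Lipschitz). For $x\in\overbar\varOmega\setminus\{0\}$ and $s\geq 0$, the group property yields $t_0(X(s;x))=t_0(x)+s$, so $\psi(X(s;x))=\psi(x)-s$; differentiating at $s=0$ at any point of differentiability of $\psi$ produces $b\cdot D\psi=-1$, and Rademacher's theorem upgrades this to an a.e.\ identity once the Lipschitz regularity of $\psi$ is known.

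Three facts remain to verify: (a) $t_0(x)<\infty$ for every $x\in\overbar\varOmega\setminus\{0\}$; (b) $t_0(x)\to\infty$ as $x\to 0$; and (c) $t_0\in C(\overbar\varOmega\setminus\{0\})$ and $t_0\in\Lip(\overbar\varOmega\setminus B_r)$ for every $r>0$. Note that $b$ is nonzero on $\overbar\varOmega\setminus\{0\}$---any additional zero would contradict (A4)(i)---so $|b|\geq\beta_r>0$ on $\overbar\varOmega\setminus B_r$.

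For (a), I argue by contradiction. Suppose $X(-s;x)\in\overbar\varOmega$ for all $s\geq 0$, and set $\alpha:=\liminf_{s\to\infty}|X(-s;x)|$. If $\alpha=0$, choose $s_n\to\infty$ with $X(-s_n;x)\to 0$; applying (A4)(ii) with $\delta=|x|/2$ yields $r>0$ such that $X(-s_n;x)\in B_r$ for large $n$ and hence $X(t;X(-s_n;x))\in B_{|x|/2}$ for every $t\geq 0$; choosing $t=s_n$ forces $x=X(s_n;X(-s_n;x))\in B_{|x|/2}$, which is absurd. If $\alpha>0$, select a limit point $z$ of the backward orbit with $|z|\geq\alpha$, use (A4)(i) to pick $T>0$ with $X(T;z)\in B_{\alpha/2}$, and invoke continuous dependence on initial data along a subsequence $s_n\to\infty$ to obtain $X(T-s_n;x)=X(T;X(-s_n;x))\to X(T;z)$, so that $|X(-(s_n-T);x)|<3\alpha/4$ for large $n$, contradicting $\liminf|X(-\cdot;x)|=\alpha$. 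Hence the backward trajectory must leave $\overbar\varOmega$ in finite time.

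For (b), if some sequence $x_n\to 0$ had $t_0(x_n)\leq T$, extract a subsequence with $t_0(x_n)\to t^*\in[0,T]$; Gronwall on $[0,T]$ gives $X(-t_0(x_n);x_n)\to X(-t^*;0)=0$, which is incompatible with $X(-t_0(x_n);x_n)\in\partial\varOmega$ and $0\in\varOmega$. For (c), the uniform transversality $b\cdot\nu\leq-\eta_0<0$ on $\partial\varOmega$ (from (A5) and compactness) together with Gronwall on $[0,M(r)]$, $M(r):=\sup_{\overbar\varOmega\setminus B_r}t_0<\infty$, yield continuity on $\overbar\varOmega\setminus\{0\}$ and the Lipschitz bound on $\overbar\varOmega\setminus B_r$. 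The chief obstacle is step (a): handling it with only (A4) and without invoking an auxiliary Lyapunov function requires the two-case split above, in which (A4)(i), (A4)(ii), and continuous dependence on initial data must be coordinated carefully.
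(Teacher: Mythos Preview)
Your construction via the backward exit time $\psi(x)=-t_0(x)$ is correct and takes a route different from the paper's. The paper instead fixes $R>0$ with $\overbar\varOmega\subset B_R$, chooses a Lipschitz cutoff $f$ with $f\geq 0$, $f=1$ on $\overbar\varOmega$ and $f=0$ outside $\overbar B_R$, and sets
\[
\psi(x):=-\int_0^\infty f(X(-t;x))\,\di t.
\]
Invoking Lemma~\ref{lem:qp1} in contrapositive form (if $x\notin B_r$ then $X(-t;x)\notin B_R$ for all $t\geq T(r,R)$), the integral reduces to one over the fixed interval $[0,T(r,R)]$, so Lipschitz regularity on $\overbar\varOmega\setminus B_r$ follows immediately from Gronwall and the Lipschitz continuity of $f$, \emph{without using the transversality (A5)}. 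The blow-up at the origin is then the one-line Gronwall estimate $|x|\leq |X(-t;x)|\e^{Lt}$. Your choice is in a sense the sharp one (indeed, taking $f=\1_{\overbar\varOmega}$ in the paper's formula recovers $-t_0$), but you pay for it by needing the two-case $\omega$-limit argument for (a) and the transversality argument for the Lipschitz bound in (c). One minor gap: step (c) uses $M(r)=\sup_{\overbar\varOmega\setminus B_r}t_0<\infty$, whereas (a) only yields pointwise finiteness; you can close this either by noting that transversality plus continuous dependence make $t_0$ upper semicontinuous (so its supremum on the compact set $\overbar\varOmega\setminus B_r$ is attained), or directly by the contrapositive of Lemma~\ref{lem:qp1}, which gives $t_0\leq T(r,R)$ on $\overbar\varOmega\setminus B_r$ at once.
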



\noindent Before proving the proposition, we show in the next lemma a localization property of the flow $X(t;x)$. 

\begin{lem}\label{lem:qp1} For any $0<r<R$, there exists  $T=T(r,R)>0$ such that, for all $x\in B_R$ and  $ t\geq T$,
$X(t;x)\in B_r.$ 
\end{lem}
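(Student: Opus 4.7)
\medskip

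\noindent\textbf{Proof sketch.} The plan is to combine the pointwise convergence provided by (A4)(i) with the local uniform stability of (A4)(ii) and the continuous dependence of the flow on initial data, and then to extract a finite subcover of $\overbar{B_R}$ to obtain a time $T$ that works uniformly.

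First, apply (A4)(ii) with $\delta=r$ to obtain some $r_0\in(0,r)$ such that $X(s;y)\in B_r$ for every $y\in B_{r_0}$ and every $s\geq 0$. This gives an absorbing small ball: once a trajectory enters $B_{r_0}$ it stays inside $B_r$ forever.

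Next, for each fixed $x\in\overbar{B_R}$, (A4)(i) produces a time $T_x>0$ with $X(T_x;x)\in B_{r_0/2}$. Since $b$ is Lipschitz by (A1), a standard Gronwall estimate yields that the map $y\mapsto X(T_x;y)$ is continuous at $x$, so there exists an open neighborhood $U_x$ of $x$ such that $X(T_x;y)\in B_{r_0}$ for every $y\in U_x$. Using the semigroup property and the absorbing property above, we get $X(t;y)=X(t-T_x;X(T_x;y))\in B_r$ for every $y\in U_x$ and every $t\geq T_x$.

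By compactness of $\overbar{B_R}$, cover it by finitely many such neighborhoods $U_{x_1},\ldots,U_{x_N}$ and set $T:=\max_{1\leq i\leq N} T_{x_i}$. Given any $x\in B_R$, it lies in some $U_{x_i}$, so for $t\geq T\geq T_{x_i}$ we obtain $X(t;x)\in B_r$, which is the claim. The only nontrivial ingredient is the continuous dependence of $X(t;\cdot)$ on initial data on the finite time interval $[0,T_x]$; everything else is bookkeeping with (A4) and the flow property.
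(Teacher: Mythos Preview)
Your argument is correct and matches the paper's proof essentially step for step: both use (A4)(ii) to find an absorbing small ball, (A4)(i) plus continuous dependence to push each point of $\overbar{B_R}$ into that ball in finite time, and compactness to make the time uniform. The only cosmetic difference is that you pass through $B_{r_0/2}$ before landing in $B_{r_0}$, whereas the paper lands directly in its $B_\delta$.
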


\begin{proof}  In view of the  asymptotic stability of the origin, there exists  $\delta>0$ such that, for all  $x\in B_\delta$ and  $t\geq 0$, 
\ $X(t;x)\in B_r$, 
while  the global asymptotic stability gives that, for each $x\in \overbar B_R$, there exists $t_x>0$ such that 
$X(t_x;x)\in B_\delta$. Moreover the continuous dependence on the initial data of the solutions of \eqref{ds}  
implies that  $X(t_x;y)\in B_\delta$ \ for all $y$ in a neighborhood 
of $x$. Finally, using the compactness of $\overbar B_R$, we find some $T>0$ such that, for each 
$x\in\overbar B_R$ there exists $\bar t_x\in[0,\,T]$ such that 
$X(\bar t_x;x)\in B_\delta. $
This  implies that, for all $t\geq \bar t_x$,  $X(t;x)\in B_r$, 
and, hence,
 $X(t;x)\in B_r$ \ for all $x\in\overbar B_R$ and $t\geq T$.  
\end{proof} 

\noindent We continue with the 

\begin{proof}[Proof of Proposition \ref{prop:qp1}] Fix $R>0$ so that $\overbar\varOmega\subset B_R$,
select $f\in \Lip(\R^n)$ such that 
$f \geq 0, \ f=1 \ \text{on } \ \overbar \varOmega \ \text{and } \ f=0 \ \text{in } \ \R^n \setminus \overbar B_R,$
and consider the transport equation
\[
b\cdot D\psi=-f \ \ \ \text{ in }\ B_R\setminus\{0\}.
\]

\noindent Lemma \ref{lem:qp1} yields 
that, for each $r\in(0,\,R)$,  there exists $T(r,R)>0$ such that, 
for all $x\in \R^n\setminus B_r$  and $t\geq T(r,R)$,
\[
X(-t;x)\in \R^n\setminus B_R. 
\]  

\noindent Next define $\psi: B_R\setminus\{0\}\to\R$ by
\[
\psi(x):=-\int_0^\infty f(X(-t;x))\di t,
\]
and note that, if $r$ and $T(r,R)$ are as above, then, for all  $x\in B_R\setminus B_r$, 
\[
\psi(x)=-\int_0^{T(r,R)}f(X(-t;x))\di t. 
\]
It follows that $\psi$ is Lipschitz continuous on any 
compact subset of $B_R\setminus\{0\}$ and 
\[
b\cdot D\psi=-f \ \ \ \text{ a.e. in }\ B_R\setminus\{0\}. 
\]
Since $b(0)=0$ and $b\in\Lip(\R^n)$, there exists $L>0$ such that $|b(x)|\leq L|x|$ 
for all $x\in\R^n$. This implies that, for all $t\geq 0$,  
\[
|x|=|X(t;X(-t;x))|\leq |X(-t;x)|\e^{Lt}, 
\] 
and, hence, $\lim_{x\to 0}\psi(x)=-\infty.$
\end{proof}

\noindent We continue with some technical consequences  of Proposition \ref{prop:qp1} which are used later in the paper.

\begin{cor} \label{cor:qp1} For each $r>0$ there exist
$\psi_r\in\Lip(\overbar\varOmega)$ and $\eta>0$ such that
\vskip.075in
\quad $H(x,D\psi_r(x))\leq -\eta  \ \text{ a.e. in } \ \varOmega\setminus B_r \ \text{ and } \ 
H(x,D\psi_r(x))\leq 0 \ \text{a.e. in } \ B_r.$

\end{cor}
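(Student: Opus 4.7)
The plan is to construct $\psi_r$ as a truncation of a small multiple of the function $\psi$ furnished by Proposition \ref{prop:qp1}. The rough idea: for $\alpha>0$ small, the function $\alpha\psi$ satisfies $b\cdot D(\alpha\psi)=-\alpha$ a.e., and the quadratic term $a(x)D(\alpha\psi)\cdot D(\alpha\psi)$ is of order $\alpha^2$, hence dominated by the linear term on any region where $D\psi$ is bounded. The catch is that $\psi$ blows up at $0$, so $|D\psi|$ is not uniformly bounded on $\overbar\varOmega\setminus\{0\}$; I will deal with this by capping $\alpha\psi$ from below at a constant level that lies well inside $B_r$.

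First I fix $r>0$. Since $\lim_{x\to 0}\psi(x)=-\infty$ and $\psi\in\Lip(\overbar\varOmega\setminus B_r)$, there exists $\delta\in(0,r)$ such that
\[
\sup_{B_\delta\setminus\{0\}}\psi < \inf_{\overbar\varOmega\setminus B_r}\psi - 1.
\]
Let $L$ denote a Lipschitz constant for $\psi$ on $\overbar\varOmega\setminus B_\delta$, so $|D\psi|\leq L$ a.e.\ there. Using (A2) this yields $a(x)D\psi\cdot D\psi\leq \theta^{-1}L^2$ a.e.\ on $\overbar\varOmega\setminus B_\delta$. Next choose $\alpha\in(0,1)$ small enough that $\alpha\theta^{-1}L^2\leq \tfrac{1}{2}$, and set
\[
C:=\alpha\Bigl(\inf_{\overbar\varOmega\setminus B_r}\psi\Bigr)-\tfrac{\alpha}{2}, \qquad \psi_r(x):=\max\{\alpha\psi(x),\,C\},
\]
with the convention $\psi_r(0)=C$ (which extends $\psi_r$ continuously to all of $\overbar\varOmega$ since $\alpha\psi\to-\infty$ at $0$).

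Next I would verify the claimed properties. By construction $\{\alpha\psi\leq C\}\cap\overbar\varOmega\subset B_\delta\subset B_r$, so on $\overbar\varOmega\setminus B_r$ one has $\psi_r=\alpha\psi$, and there
\[
H(x,D\psi_r)=\alpha^2 a(x)D\psi\cdot D\psi+\alpha b\cdot D\psi\leq \alpha^2\theta^{-1}L^2-\alpha\leq -\tfrac{\alpha}{2}=:-\eta.
\]
On $B_r$, at a.e.\ point either $D\psi_r=0$, in which case $H(x,0)=0$, or $\psi_r=\alpha\psi>C$; in the latter case the point lies in $\overbar\varOmega\setminus B_\delta$ (again because $\alpha\psi\leq C$ throughout $B_\delta\setminus\{0\}$), so the same computation gives $H(x,D\psi_r)\leq -\eta\leq 0$. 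Finally, $\psi_r$ is Lipschitz on $\overbar\varOmega$: it equals $\alpha\psi$ on the closed set $\{\alpha\psi\geq C\}$, which lies in $\overbar\varOmega\setminus B_\delta$ where $\alpha\psi$ is Lipschitz with constant $\alpha L$, and equals the constant $C$ elsewhere, and the maximum of two Lipschitz functions is Lipschitz.

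The only delicate step is the choice of the truncation level: one must pick $C$ small enough that the set $\{\alpha\psi\leq C\}$ does not meet $\overbar\varOmega\setminus B_r$ (so that the sharp estimate $H\leq-\eta$ holds everywhere outside $B_r$), while simultaneously ensuring that wherever $\psi_r=\alpha\psi$ the gradient bound $|D\psi|\leq L$ is available. The inclusion $\{\alpha\psi>C\}\subset\overbar\varOmega\setminus B_\delta$ is precisely what makes both requirements compatible, and it is enforced by the blow-up of $\psi$ at the origin guaranteed by Proposition \ref{prop:qp1}.
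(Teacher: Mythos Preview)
Your proof is correct and follows essentially the same route as the paper's: truncate $\psi$ at a level below $\inf_{\overbar\varOmega\setminus B_r}\psi$ and multiply by a small constant so that the quadratic part of $H$ is dominated by the linear part (the paper truncates first and then scales, you scale first and then truncate, which is immaterial). One small overstatement: the inclusion $\{\alpha\psi\leq C\}\cap\overbar\varOmega\subset B_\delta$ is not justified by your construction (only the weaker inclusion $\subset B_r$ follows from the choice of $C$), but you only use $\subset B_r$ at that step, and you correctly handle the $B_\delta$ issue afterward via the observation that $\alpha\psi<C$ throughout $B_\delta\setminus\{0\}$.
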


\begin{proof} Let $\psi\in C(\overbar\varOmega\setminus\{0\})$ be the function constructed in Proposition \ref{prop:qp1}. 
Fix $r>0$ and select $R>0$ so that 
\[
\min_{\overbar\varOmega\setminus B_r}\psi>-R,  
\]
define $\chi_r\in\Lip(\overbar\varOmega)$ by  
\[
\chi_r(x):=
\begin{cases} 
-R &\text{ if }\ x=0,\\
\max\{\psi(x),\,-R\}&\text{ otherwise},
\end{cases}
\]
and observe that $\chi_r=\psi$ in  $\overbar\varOmega\setminus B_r$ and, for any  differentiability point of $\chi_r$ in
$\varOmega$, if $\chi_r(x)>-R$, then $\chi_r(x)=\psi(x)$ and $D\chi_r(x)=D\psi(x)$ and, if $\chi_r(x)=-R$, then $D\chi_r(x)=0$. 
Hence, 
\[
D\chi_r=\begin{cases}
D\psi&\text{ a.e. in }\ \varOmega\setminus B_r,\\
D\psi \ \ \text{ or } \ \ 0 &\text{ a.e. in }\ B_r.
\end{cases} 
\]
Let $\lambda>0$ be a constant to be 
fixed later, set $\psi_r:=\lambda\chi_r$ and note that, for a.e. $x\in\varOmega\setminus B_r$, 
\[
H(x,D\psi_r)\leq \theta^{-1}|D\psi_r|^2+b(x)\cdot D\psi_r 
\leq \lambda(\theta^{-1}C\lambda-1),
\] 
where  $C>0$ be a Lipschitz bound of \ $\chi_r$.
\smallskip

\noindent If $\lambda:=\theta/(2C)$, then  
\[
H(x,D\psi_r)\leq -\lambda/2 \ \ \ \text{ a.e. in } \ \varOmega\setminus B_r. 
\]
Similarly, it is easy to check that   
\[
H(x,D\psi_r)\leq 0  \ \ \ \text{ a.e. in } \ B_r\cap\varOmega.
\]
\end{proof}

\begin{cor}\label{cor:qp2} For all \ $y\in\overbar\varOmega\setminus\{0\}$,   \ \ $V_\varOmega(0,y)>0$. 
\end{cor}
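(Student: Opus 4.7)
The natural approach is to combine the dual representation \eqref{eq:qp1} of the quasi-potential with the Lipschitz sub-solution $\psi_r$ furnished by Corollary \ref{cor:qp1}. Since a member of $\mathcal{S}^-(\Omega)$ is a sub-solution of $H(x,-D\psi)\le 0$, I would work with $\phi := -\psi_r$, which, thanks to Corollary \ref{cor:qp1}, satisfies $H(x,-D\phi)=H(x,D\psi_r)\le 0$ almost everywhere in $\Omega$. The plan is then to lower-bound $V_\Omega(0,y)$ by $\phi(0)-\phi(y)=\psi_r(y)-\psi_r(0)$ and to read strict positivity off of the explicit construction of $\psi_r$.

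Concretely, I would fix $y\in\overline\Omega\setminus\{0\}$ and choose $r\in(0,|y|)$ so that $y\in\overline\Omega\setminus B_r$. Because $\psi_r$ is Lipschitz and $H(x,\cdot)$ is convex and coercive, the a.e.\ bound $H(x,D\psi_r)\le 0$ in $\Omega$ upgrades to a viscosity sub-solution inequality on $\Omega$, so $\phi=-\psi_r\in\mathcal{S}^-(\Omega)$. Applying \eqref{eq:qp1} gives
\[
V_\Omega(0,y)\;\ge\;\phi(0)-\phi(y)\;=\;\psi_r(y)-\psi_r(0)\;=\;\lambda\bigl(\chi_r(y)-\chi_r(0)\bigr),
\]
where $\psi_r=\lambda\chi_r$ is the form produced in the proof of Corollary \ref{cor:qp1}.

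To finish, I would unpack the construction of $\chi_r$ from the proofs of Proposition \ref{prop:qp1} and Corollary \ref{cor:qp1}. By definition $\chi_r(0)=-R$, while $R$ was chosen precisely to ensure $\min_{\overline\Omega\setminus B_r}\psi>-R$. Since $y\in\overline\Omega\setminus B_r$, this yields $\chi_r(y)=\psi(y)>-R=\chi_r(0)$, and therefore
\[
V_\Omega(0,y)\;\ge\;\lambda\bigl(\psi(y)+R\bigr)\;>\;0,
\]
as desired.

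The only step requiring some comment is the passage from the a.e.\ sub-solution property of $\psi_r$ to the viscosity one used to invoke \eqref{eq:qp1}; this is a standard consequence of the convexity of $H(x,\cdot)$ together with the Lipschitz regularity of $\psi_r$, and I do not expect it to be a genuine obstacle. The rest is bookkeeping: the key qualitative fact being exploited is that the auxiliary function from Corollary \ref{cor:qp1} attains its minimum at the origin and strictly exceeds this minimum outside $B_r$.
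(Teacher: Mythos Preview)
Your argument is correct and is essentially identical to the paper's own proof: the paper also takes $\phi:=-\psi_r$ with $\psi_r$ from Corollary~\ref{cor:qp1}, invokes the representation \eqref{eq:qp1} to get $V_\varOmega(0,y)\ge \phi(0)-\phi(y)$, and reads off strict positivity from $\psi_r(0)<\psi_r(y)$ for $y\notin B_r$. The a.e.-to-viscosity passage you flag is exactly the equivalence stated at the start of Section~\ref{sec:qp}, so no additional work is needed there.
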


\begin{proof} Fix any $y\in\overbar\varOmega\setminus\{0\}$, choose $r>0$ so that 
$y\not\in \overbar B_r$ and let $\psi_r\in\Lip(\overbar\varOmega)$ be as in the proof of Corollary 
\ref{cor:qp1}, so that 
\[ H(x,D\psi_r(x))\leq 0  \ \text{ a e. in } \ \varOmega \ \text{ and } \ 
\psi_r(0)<\psi_r(x). \]
Set $\phi:=-\psi_r$ and observe that 
\[
H(x,-D\phi)\leq 0 
\ \text{ a.e. in } \ \varOmega \ \text{ and }  \
\phi(0)-\phi(y)>0.
\]
Now, in view of  \eqref{eq:qp1}, 
\[
V_\varOmega(0,y)\geq \phi(0)-\phi(y)>0. \qedhere
\]
\end{proof} 

\noindent  The aim of the rest of this section is to construct a smooth approximation of $V(\cdot)=V_\varOmega(0,\cdot)$ which is a strict sub-solution
of the above Hamilton-Jacobi equation away from $0$ while it remains a sub-solution in the whole domain.

\begin{prop} \label{prop:qp2}For any $r>0$ there exist $V_r\in\Lip(\overbar\varOmega)$ and  
$\eta>0$ such that, 
\[
H(x, DV_r)\leq -\eta \ \text{ a.e. in } \ \varOmega\setminus B_r, \ 
H(x, DV_r)\leq 0 \ \text{ a.e. in } \ B_r \ \text{ and } \ 
\|V_r-V\|_{\infty,\varOmega}<r.
\]
\end{prop}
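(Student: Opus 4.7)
The plan is to produce $V_r$ as a convex combination of the quasi-potential $V = V_\varOmega(0,\cdot)$ itself and the strict Lipschitz sub-solution $\psi_r$ supplied by Corollary \ref{cor:qp1}, exploiting the convexity of $H(x,\cdot)$ that comes from the positivity of $a(x)$.

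More precisely, I would proceed as follows. First, recall that $V \in \Lip(\overbar\varOmega)$ — this follows from $v(y) := V_\varOmega(x,y)$ being a viscosity sub-solution of $H(y, Dv) \leq 0$ in $\varOmega$ together with the coercivity of $H$ in $p$ noted earlier in this section — and, by the a.e. characterization of Lipschitz sub-solutions recalled right after \eqref{eq:qp1}, $V$ satisfies $H(x, DV) \leq 0$ a.e.\ in $\varOmega$. Next, apply Corollary \ref{cor:qp1} with the given $r$ to obtain $\psi_r \in \Lip(\overbar\varOmega)$ and $\eta_0 > 0$ with $H(x, D\psi_r) \leq -\eta_0$ a.e.\ in $\varOmega \setminus B_r$ and $H(x, D\psi_r) \leq 0$ a.e.\ in $B_r$. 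For $\mu \in (0,1)$ to be fixed below, define
\[
V_r := (1-\mu) V + \mu \psi_r,
\]
which is automatically Lipschitz on $\overbar\varOmega$.

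Now the key computation: the map $p \mapsto H(x,p) = a(x)p \cdot p + b(x) \cdot p$ is convex in $p$ since $a(x) \geq \theta I > 0$. Hence at a.e.\ point $x \in \varOmega$ (where both $V$ and $\psi_r$ are differentiable, by Rademacher),
\[
H(x, DV_r(x)) \leq (1-\mu)\,H(x, DV(x)) + \mu\, H(x, D\psi_r(x)).
\]
Combined with the previous bounds on $H(x,DV)$ and $H(x,D\psi_r)$, this yields
\[
H(x, DV_r) \leq -\mu\, \eta_0 \ \text{ a.e. in } \varOmega \setminus B_r \quad \text{and} \quad H(x, DV_r) \leq 0 \ \text{ a.e. in } B_r.
\]
Finally, since $V_r - V = \mu(\psi_r - V)$, we have $\|V_r - V\|_{\infty, \varOmega} \leq \mu\,\|\psi_r - V\|_{\infty, \varOmega}$, so choosing
\[
\mu := \min\!\left\{ \tfrac{1}{2}, \ \tfrac{r}{1 + \|\psi_r - V\|_{\infty,\varOmega}} \right\}
\]
secures both $\|V_r - V\|_{\infty,\varOmega} < r$ and a valid constant $\eta := \mu \eta_0 > 0$ in the strict sub-solution inequality.

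There is no real obstacle here beyond being careful about the ``viscosity $\Leftrightarrow$ a.e.'' passage for $V$, which is exactly the equivalence recorded immediately after \eqref{eq:qp1} and which requires only that $V$ be Lipschitz and $H(x,\cdot)$ continuous. Note that this construction only produces a Lipschitz $V_r$, not a $C^2$ one as loosely described in the introduction; any later use that requires smoothness would need to be followed by a standard mollification (sup-convolution or Jensen's inequality for the convex $H(x,\cdot)$ combined with a mollifier in $x$), but this is not part of the current statement.
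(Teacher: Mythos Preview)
Your proof is correct and is essentially identical to the paper's own argument: the paper also sets $V_r:=(1-\delta)V+\delta\psi_r$, invokes the convexity of $p\mapsto H(x,p)$ to combine the a.e.\ inequalities for $V$ and $\psi_r$, and then chooses $\delta$ small so that $\delta\|V-\psi_r\|_{\infty,\varOmega}<r$. Your closing remark about a subsequent mollification step is also on target---that is exactly the content of the next result in the paper (Theorem~\ref{thm:qp2}).
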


\begin{proof} Fix $r>0$, let $\psi_r\in\Lip(\overbar\varOmega)$ and $\eta>0$ be as in Corollary \ref{cor:qp1} and 
$\delta\in(0,\,1)$ a constant to be fixed later, define  $V_r\in\Lip(\overbar\varOmega)$ 
by $V_r:=(1-\delta)V+\delta\psi_r$ and observe that
\[
\begin{cases}
H(x,DV_r(x))\leq (1-\delta)H(x,DV(x))+\delta H(x,D\psi_r(x))\leq -\eta\delta  
\ \ \ \text{ a.e. in } \varOmega\setminus B_r, \\[3pt]
H(x,DV_r(x))\leq 0  \  \text{ a.e. in }  \  B_r,\\[3pt]
|V(x)-V_r(x)|\leq \delta|\psi_r(x)-V(x)| \ \ \ \text{ for all } \ x\in\overbar\varOmega.&
\end{cases}
\]
The claim follows if $\delta>0$ is so small that  
$\delta\|V-\psi_r\|_{\infty,\varOmega}<r$.  
\end{proof} 

\begin{thm}\label{thm:qp2} For any $r>0$ there exist $W_r\in C^2(\overbar\varOmega)$ and 
$\eta>0$ such that 
\[
H(x,DW_r)\leq -\eta \ \text{ in } \ \varOmega\setminus B_r, \ 
H(x,DW_r)\leq 1 \ \text{ in } \ B_r, \ \text{ and } \ 
\|W_r-V\|_{\infty,\varOmega}<r.
\]
\end{thm}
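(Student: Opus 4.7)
The idea is to upgrade the Lipschitz approximation of Proposition \ref{prop:qp2} to a $C^2$ approximation by convolving it with a standard mollifier. Since $p\mapsto H(x,p)$ is convex (as $a(x)>0$), Jensen's inequality transfers the a.e.\ sub-solution property to the smoothed function, at the cost of a small error coming from the Lipschitz $x$-dependence of $a$ and $b$.

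First I would fix $r>0$, pick $r'\in(0,r)$ (say $r'=r/2$), and apply Proposition \ref{prop:qp2} to obtain $V_{r'}\in\Lip(\overbar\varOmega)$ and $\eta_0>0$ with $H(x,DV_{r'})\leq -\eta_0$ a.e.\ in $\varOmega\setminus B_{r'}$, $H(x,DV_{r'})\leq 0$ a.e.\ in $B_{r'}$, and $\|V_{r'}-V\|_{\infty,\varOmega}<r'$. Using (A5) together with the $C^1$ regularity of $\partial\varOmega$, I would extend $V_{r'}$ to a Lipschitz function $\widetilde V_{r'}$ on an open neighborhood $\varOmega^*\supset\overbar\varOmega$ so that $H(\,\cdot\,,D\widetilde V_{r'})\leq 0$ a.e.\ still holds in $\varOmega^*\setminus B_{r'}$. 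The natural candidate is transport along the flow of $b$: since $b\cdot\nu<0$ on $\partial\varOmega$, for $x$ just outside $\overbar\varOmega$ the orbit $X(\,\cdot\,;x)$ enters $\varOmega$ in short time $t_x$, so setting $\widetilde V_{r'}(x):=V_{r'}(X(t_x;x))$ gives an extension whose weak derivatives inherit the Hamiltonian bound via the characterization of $V_{r'}$ along trajectories.

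With this extension in hand, let $\rho_\varepsilon\in C_c^\infty(B_\varepsilon)$ be a standard mollifier, fix $\varepsilon<\dist(\overbar\varOmega,\R^n\setminus\varOmega^*)$, and set $W_r:=\widetilde V_{r'}*\rho_\varepsilon\in C^\infty(\overbar\varOmega)$. Jensen's inequality and convexity of $H(x,\cdot)$ give
\[
H(x,DW_r(x))\leq \int\rho_\varepsilon(y)\,H(x,D\widetilde V_{r'}(x-y))\,\di y.
\]
The Lipschitz continuity of $a,b$ and the uniform bound $|D\widetilde V_{r'}|\leq L$ yield $|H(x,p)-H(x-y,p)|\leq C_1|y|$ for $|p|\leq L$, so
\[
H(x,DW_r(x))\leq \int\rho_\varepsilon(y)\,H(x-y,D\widetilde V_{r'}(x-y))\,\di y + C_1\varepsilon.
\]
Taking $\varepsilon<r-r'$, for $x\in\varOmega\setminus B_r$ and $|y|\leq\varepsilon$ one has $x-y\in\varOmega^*\setminus B_{r'}$, so the integrand is $\leq -\eta_0$ a.e., whence $H(x,DW_r)\leq -\eta_0+C_1\varepsilon$. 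For $x\in B_r$ the integrand is $\leq 0$, whence $H(x,DW_r)\leq C_1\varepsilon$. Finally $\|W_r-V\|_{\infty,\varOmega}\leq L\varepsilon+\|V_{r'}-V\|_{\infty,\varOmega}<L\varepsilon+r'$. Choosing $\varepsilon$ so small that $\eta:=\eta_0-C_1\varepsilon>0$, $C_1\varepsilon\leq 1$, and $L\varepsilon+r'<r$ completes the construction.

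The main obstacle is the extension step. A plain McShane Lipschitz extension preserves the Lipschitz constant but not the a.e.\ Hamiltonian bound outside $\varOmega$, and mollification at points of $\varOmega$ near $\partial\varOmega$ would then pick up non-negligible contributions from such exterior points, destroying the strict inequality $H\leq -\eta$. Overcoming this requires a structured extension that exploits (A5) (inward-pointing $b$) and the $C^1$ regularity of $\partial\varOmega$---for instance via the backward flow of $b$, or via flattening the boundary and reflecting---so that the a.e.\ Hamiltonian bound survives in a small collar around $\partial\varOmega$. Everything else is routine mollification bookkeeping.
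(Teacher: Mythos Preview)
Your overall strategy---start from Proposition \ref{prop:qp2}, mollify, and use Jensen's inequality for the convex map $p\mapsto H(x,p)$---is exactly the paper's. The gap is precisely where you flag it, and the concrete extension you propose does not work. If you set $\widetilde V_{r'}(x):=V_{r'}(X(t_x;x))$ for $x$ just outside $\overbar\varOmega$, then $\widetilde V_{r'}$ is constant along the trajectories of $b$ in the exterior collar (the entry point of $X(\cdot\,;X(s;x))$ coincides with that of $X(\cdot\,;x)$ by the semigroup property), so $b(x)\cdot D\widetilde V_{r'}(x)=0$ a.e.\ there. Hence
\[
H(x,D\widetilde V_{r'}(x))=a(x)D\widetilde V_{r'}\cdot D\widetilde V_{r'}\ \geq\ \theta\,|D\widetilde V_{r'}|^2\ \geq\ 0,
\]
and since the tangential derivative of $\widetilde V_{r'}$ along $\partial\varOmega$ equals that of $V_{r'}$, the gradient does not vanish in general. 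The strict inequality $H\leq -\eta_0$ is therefore lost on the whole collar, and mollification at points of $\varOmega$ near $\partial\varOmega$ then picks up a nonnegative contribution---exactly the failure you wanted to avoid. The reflection alternative you mention is not worked out either and has its own difficulty: a reflection $R$ in the normal direction turns the drift term into $Rb\cdot DV_{r'}$, and by (A5) the normal component of $b$ is nonzero on $\partial\varOmega$, so $Rb\neq b$ and the bound does not obviously survive.

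The paper sidesteps the extension issue entirely. Rather than pushing the function outward across $\partial\varOmega$, it pulls the domain inward: using only the $C^1$ regularity of $\partial\varOmega$ (not (A5)), it takes a $C^1$-diffeomorphism $\Phi_\delta:\R^n\to\R^n$ with $\Phi_\delta(\overbar\varOmega)\subset\varOmega$, $\|D\Phi_\delta-I\|_{\infty,\R^n}<\delta$, and $\Phi_\delta=\mathrm{id}$ on $B_r$. Then $V_{r,\delta}:=V_r\circ\Phi_\delta$ is automatically Lipschitz on the open neighborhood $U_\delta:=\Phi_\delta^{-1}(\varOmega)\supset\overbar\varOmega$, and since $D\Phi_\delta\approx I$ and $\Phi_\delta\approx\mathrm{id}$, the chain rule together with the uniform continuity of $H$ on bounded sets yields $H(x,DV_{r,\delta})\leq -\eta/2$ a.e.\ in $U_\delta\setminus B_r$ for $\delta$ small. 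After this step the mollification and Jensen bookkeeping proceed exactly as you wrote (the paper absorbs the radius losses by producing $W_{3r}$ and then relabelling).
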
  

\begin{proof} Fix $r>0$ and let $V_r\in\Lip(\overbar\varOmega)$ and $\eta>0$ be as in  Proposition 
\ref{prop:qp2} and  $\delta>0$. 
In view  of the $C^1$-regularity of $\partial\varOmega$, there exists 
a $C^1$-diffeomorphism $\Phi_\delta\mid \R^n\to\R^n$ such that 
$$ \Phi_\delta(\overbar\varOmega)\subset \varOmega, \  \|D\Phi_\delta-I\|_{\infty,\R^n}<\delta \  \text{and} \ \Phi(x)=x \  \text{ for all }\ x\in B_r.$$
Let $V_{r,\delta}:=V_r\circ\Phi_\delta$, observe that $V_{r,\delta}\in\Lip(U_\delta)$ where $U_\delta:=\Phi_\delta^{-1}(\varOmega)$,  
and fix $\delta>0$ sufficiently small so that 
\[
\begin{cases}
H(x,DV_{r,\delta})<-\eta/2&\text{ a.e. in } \ U_\delta\setminus B_r,\\[3pt]
H(x,DV_{r,\delta})\leq 0 &\text{ a.e. in } \ B_r\cap U_\delta, \\[3pt]
\|V_{r,\delta}-V\|_{\infty,\varOmega}<2r.
\end{cases}
\]
\smallskip

\noindent Next let $\rho$ be a standard mollifier 
in $\R^n$ with $\supp \rho \subset B_1$, 
and choose $\gamma\in(0,\,r/2)$ small enough so that $B_\gamma(x)\subset U_\delta$ 
for all $x\in\overbar\varOmega$. Hence, 
$W:=\rho_\gamma*V_{r,\delta}$ is well-defined in $\overbar\varOmega$, where 
$\rho_\gamma(x):=\gamma^{-n}\rho (\gamma^{-1} x)$. 
\smallskip

\noindent Let $L>0$  and  
$\omega_H$ be respectively a Lipschitz bound of $V_{r,\delta}$ and  
the modulus of continuity of $H$ on $\overbar\varOmega\times B_L$,
fix any $x\in\varOmega\setminus B_{2r}$, note that   
\[
-\frac\eta 2\geq H(x-y,DV_{r,\delta}(x-y))\geq H(x,DV_{r,\delta}(x-y))-\omega_H(\gamma) \ \ \
\text{ for a.e. }\ y\in B_{\gamma},
\] 
and observe that, by Jensen's inequality, 
\[
H(x,DW)
\leq \int_{B_\gamma}H(x,DV_{r,\delta}(x-y))\rho_\gamma(y)\di y\leq \omega_H(\gamma)-\frac{\eta}{2}.
\] 
Similarly, we find that, for any $x\in\varOmega$, 
\[
H(x,DW)\leq \omega_H(\gamma). 
\]
Thus, for  $\gamma>0$ small enough,  
\[
\begin{cases}
H(x,DW(x))<-\eta/3 &\text{ for all } \ x\in\varOmega\setminus B_{3r},\\[3pt]
H(x,DW(x))\leq 1 & \text{ for all } \ x\in B_{3r}\cap\varOmega,\\[3pt]
\|W-V\|_{\infty,\varOmega}<3r.&
\end{cases}
\]
The function $W$ has all the properties required for $W_{3r}$ and, since $r>0$ is arbitrary, this completes the proof. 
\end{proof}

 
\section{Asymptotics in a smaller time scale} \label{sec:sts}

\noindent Fix  $r>0$ and $\mu>0$ and let $W_r\in C^2(\overbar\varOmega)$ and $\eta>0$ be given by  Theorem \ref{thm:qp2}.  
For $\varepsilon>0$ and $x \in \overbar\varOmega$ set 
\begin{equation}\label{eq:sts1a}
v^\varepsilon(x):=\exp\left(\frac{W_r(x)-\mu}{\varepsilon}\right), 
\end{equation}
and note that
\[
L_\varepsilon v^\varepsilon=\frac{v^\varepsilon}{\varepsilon}\left(H(x, DW_r)+\varepsilon\tr[aD^2W_r]\right).
\]
\smallskip

\noindent Select  $C>0$ and $\varepsilon_0>0$ so that $\,\varepsilon_0C<1\wedge \eta$ and, for all  $\,x \in \overbar\varOmega\,$,
$\,|\tr[a(x)D^2W_r(x)]|\leq C$.
It follows that, for any $\varepsilon\in(0,\,\varepsilon_0)$,  
\begin{equation}\label{eq:sts2}
L_\varepsilon v^\varepsilon\leq \frac{v^\varepsilon}{\varepsilon}\left(H(x,DW_r)+\varepsilon C\right)<
\begin{cases}
0&\text{ in } \ \varOmega\setminus B_r,\\[3pt] \displaystyle
\frac{2v^\varepsilon}{\varepsilon}& \text{ in } \ B_r.
\end{cases}
\end{equation}
\smallskip

\noindent Set \ $R_\varepsilon:=(2/\varepsilon)\|v^\varepsilon\|_{\infty,B_r\cap\varOmega}$ \ and, for  $(x,t)\in\overbar Q$,
\begin{equation}\label{eq:sts3}
w^\varepsilon(x,t):=v^\varepsilon(x)+R_\varepsilon t. 
\end{equation}
It follows easily that, for any $\varepsilon\in(0,\,\varepsilon_0)$,
\begin{equation}\label{eq:sts4}
w_t^\varepsilon>L_\varepsilon w^\varepsilon\ \ \ \text{ in } \ Q. 
\end{equation}  
\smallskip

\noindent The main result of this section is about the behavior, as $\varepsilon\to 0$, of the solution 
$u^\varepsilon\in C(\overbar Q)\cap C^{2,1}(Q)$ of \eqref{eq:int1},
\eqref{eq:boundary} in $B_r \times [0,\exp \lambda/\varepsilon]$, for $\lambda$ as in the statement below.

\begin{thm}\label{thm:sts1} If  $\lambda>0$ is such that 
$\{V\leq \lambda\}\subset\{g\leq 0\}$, 
then, for each $\delta>0$, there exists $r>0$ such that
\[
\lim_{\varepsilon\to 0+} (u^\varepsilon-\delta)_+=0 \quad\text{ uniformly on } \ B_r\times [0,\e^{\lambda/\varepsilon}]. 
\]
\end{thm}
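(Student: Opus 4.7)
The plan is to invoke the parabolic comparison principle with $w^\varepsilon+\delta$ as super-solution, leveraging~(\ref{eq:sts4}). By the linearity of $L_\varepsilon$, I first reduce to $g \geq 0$: replacing $g$ by $g_+$ yields a solution $u^{\varepsilon,+}$ dominating $u^\varepsilon$ (standard parabolic max principle), and $(u^\varepsilon - \delta)_+ \leq (u^{\varepsilon,+} - \delta)_+$, so it suffices to work with $g_+$. The hypothesis then reads $\{g_+ > 0\} \subset \{V > \lambda\}$. Fix $\delta > 0$; by the continuity of $V$ at $0$ (where $V(0)=0$), pick the statement's radius $r > 0$ small enough that $\sup_{B_r} V < \lambda/4$; apply Theorem~\ref{thm:qp2} with a parameter $r' \in (0,\, r \wedge \lambda/4)$ to obtain $W_{r'}$; and select $\mu$ in the (nonempty) interval $(\sup_{B_r} W_{r'},\,\lambda - r')$. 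With these choices, $v^\varepsilon, R_\varepsilon, w^\varepsilon$ are as defined in~(\ref{eq:sts1a})--(\ref{eq:sts3}), and by~(\ref{eq:sts4}) $w^\varepsilon$ is a strict parabolic super-solution on $Q$ for $\varepsilon<\varepsilon_0$.

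The boundary inequality $w^\varepsilon + \delta \geq g_+$ on $\partial_p Q$ is then easy to verify. On $\{g_+=0\}$ it is immediate (since $w^\varepsilon \geq 0$), while on $\{g_+>0\}\subset\{V>\lambda\}$ the estimate $W_{r'}(x) \geq V(x) - r' > \lambda - r'$ combined with $\mu < \lambda - r'$ gives
\[
v^\varepsilon(x) \geq \exp\!\left(\frac{\lambda - r' - \mu}{\varepsilon}\right) \to \infty \quad(\varepsilon \to 0^+),
\]
so $v^\varepsilon(x) \geq \|g\|_\infty$ for $\varepsilon$ small. The standard comparison principle for linear uniformly parabolic equations (valid under (A1)--(A3) and the continuity of $g$) extends this inequality to all of $\overbar Q$, and on $B_r \times [0, e^{\lambda/\varepsilon}]$ it reads $(u^\varepsilon - \delta)_+ \leq v^\varepsilon(x) + R_\varepsilon t$.

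It remains to show both terms on the right vanish uniformly as $\varepsilon \to 0^+$. For the first, $v^\varepsilon(x) \leq \exp((\sup_{B_r}W_{r'} - \mu)/\varepsilon)$ tends to $0$ uniformly on $B_r$ by the choice $\mu > \sup_{B_r} W_{r'}$. The delicate step is bounding $R_\varepsilon e^{\lambda/\varepsilon}$: the naive estimate from~(\ref{eq:sts2})--(\ref{eq:sts3}) using only ``$H(x,DW_{r'}) \leq 1$ in $B_{r'}$'' gives
\[
R_\varepsilon e^{\lambda/\varepsilon} = O\!\left(\varepsilon^{-1}\exp\!\left(\frac{\sup_{B_{r'}}W_{r'} + \lambda - \mu}{\varepsilon}\right)\right),
\]
and the constraints $\mu < \lambda - r'$ together with $\sup_{B_{r'}}W_{r'} \geq V(0) - r' = -r'$ force the exponent to be non-negative, so the naive bound is insufficient. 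This is the main obstacle, and its resolution is to replace ``$H\leq 1$ in $B_{r'}$'' with the much sharper estimate $H(x,DW_{r'}) \leq \omega_H(\gamma)$ that is actually derived, via Jensen's inequality, in the mollification step of the proof of Theorem~\ref{thm:qp2} (where $\omega_H(\gamma)$ is the modulus of continuity of $H$ in the spatial variable and can be made arbitrarily small by choosing the mollification scale $\gamma$ small). Substituting this sharper estimate into~(\ref{eq:sts2}) replaces the constant $2$ in the definition of $R_\varepsilon$ by $\omega_H(\gamma) + \varepsilon C$, and a careful joint calibration of the parameters $r', \mu, \gamma$ as functions of $\delta$ alone then drives $R_\varepsilon e^{\lambda/\varepsilon}$ to zero and completes the proof. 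This parameter calibration is the technical heart of the argument.
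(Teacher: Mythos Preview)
Your argument has a genuine gap at the ``delicate step,'' and the proposed fix cannot work. The constraint you impose on $\mu$, namely $\mu<\lambda-r'$, is needed so that $v^\varepsilon\to\infty$ on $\{V>\lambda\}$ and the boundary comparison goes through. But then, since $\sup_{B_{r'}}W_{r'}\geq W_{r'}(0)\geq V(0)-r'=-r'$, the exponent in your $R_\varepsilon\,\e^{\lambda/\varepsilon}$ estimate satisfies
\[
\sup_{B_{r'}}W_{r'}-\mu+\lambda\;\geq\;-r'-\mu+\lambda\;>\;-r'+r'\;=\;0.
\]
This exponent is \emph{strictly positive} and bounded away from zero independently of the mollification scale $\gamma$. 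Replacing the constant $2$ in $R_\varepsilon$ by $\omega_H(\gamma)+\varepsilon C$ only multiplies $\e^{c/\varepsilon}$ (with fixed $c>0$) by a small prefactor; the product still diverges as $\varepsilon\to 0^+$. No ``joint calibration'' of $r',\mu,\gamma$ can reconcile the two incompatible requirements $\mu<\lambda-r'$ and $\mu>\sup_{B_{r'}}W_{r'}+\lambda\geq \lambda-r'$.

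The idea you are missing is that the $\delta$ in the statement buys you room in the \emph{level set}, not just an additive constant in the comparison. Since $\{V\leq\lambda\}\subset\{g\leq 0\}$ and the latter is closed, the set $\{g\leq\delta\}=\{g-\delta\leq 0\}$ is a relative neighborhood of $\{g\leq 0\}$, hence contains $\{V\leq\gamma\}$ for some $\gamma>\lambda$. Working with $U^\varepsilon:=u^\varepsilon-\delta$ and $G:=g-\delta$, you now have $\{V\leq\gamma\}\subset\{G\leq 0\}$ with $\gamma>\lambda$, so you can choose $\mu\in(\lambda,\gamma)$. With $\mu>\lambda$ (and $r$ small enough that $\tilde\mu:=\mu-C_0 r>\lambda$), one gets directly $R_\varepsilon\,\e^{\lambda/\varepsilon}\leq (2/\varepsilon)\,\e^{(\lambda-\tilde\mu)/\varepsilon}\to 0$, and the sharper $\omega_H(\gamma)$ estimate is never needed. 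Your reduction to $g_+$ is harmless but also unnecessary; the essential move is to shift by $\delta$ \emph{before} setting up the comparison, thereby creating the gap $\gamma>\lambda$.
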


\begin{proof} Choose $r>0$ so small that $B_r\subset \varOmega$, let $W_r$, $\varepsilon_0$, $v^\varepsilon$ and $w^\varepsilon$ be as above,
fix $\delta>0$ and set $\,U^\varepsilon:=u^\varepsilon-\delta$ and $G:=g-\delta$. 
Since  $\{G\leq 0\}$ is a 
neighborhood of $\{g\leq 0\}$, we may choose $\gamma>\lambda$ such that 
\[
\{V\leq\gamma\}\subset\{G\leq 0\}.
\]  
It follows from the maximum principle that 
$\sup_{Q}U^\varepsilon\leq \sup_{\varOmega}G$ and, hence, 
\[
U^\varepsilon\leq M:=\|G\|_{\infty,\varOmega} \ \ \ \text{ on } \  \overbar Q.
\] 

\noindent Fix $\mu>0$ in \eqref{eq:sts1a} (the definition of $v^\varepsilon$) so that 
$\lambda<\mu<\gamma$, and, if needed, 
select $r>0$ even smaller so that $\gamma-r-\mu>0$, which ensures that 
\[
v^\varepsilon=\exp\left(\frac{W_r-\mu}{\varepsilon}\right)
>\exp\left(\frac{\gamma-r-\mu}{\varepsilon}\right) \ \text{ in } \ \{G>0\}.
\] 
Taking, if necessary, $\varepsilon_0>0$ even smaller, we may assume that, if $0<\varepsilon<\varepsilon_0$, then 
\[
\exp\left(\frac{\gamma-r-\mu}{\varepsilon}\right)>M. 
\] 
Hence, for $\varepsilon\in(0,\,\varepsilon_0),$
\[
v^\varepsilon>M\geq G \ \text{in} \ \{G> 0\} \  \text{ and } \ 
v^\varepsilon>0\geq G \  \text{ in } \  \{G\leq 0\},
\]
and, accordingly, 
$\,w^\varepsilon\geq G=U^\varepsilon\,$  on  $\,  \partial_{\mathrm{p}} Q$. 
Using the maximum principle we get, for all $(x,t)\in \overbar\varOmega\times[0,\,\infty)$ and $ \varepsilon\in(0,\,\varepsilon_0)$,
\begin{equation} \label{eq:sts5}
U^\varepsilon(x,t)\leq w^\varepsilon(x,t)=v^\varepsilon(x)+R_\varepsilon t. 
\end{equation}

\noindent Since $V\in\Lip(\overbar\varOmega)$ and $V(0)=0$, there exists $C_0>0$ such that 
$\,|W_r|\leq C_0r\,$ in $\, B_r$ 
and, therefore,
\[ v^\varepsilon\leq\exp\left(\frac{C_0r-\mu}{\varepsilon}\right) \ \ \ \text{ in  } \ B_r.\]

\noindent Next assume that  $r$ is even  smaller so that 
$\tilde\mu:=\mu-C_0r>\lambda$, which implies that  
\[
R_\varepsilon=\frac{2\|v^\varepsilon\|_{\infty,B_r}}{\varepsilon}< \frac{2\e^{-\tilde\mu/\varepsilon}}{\varepsilon},
\]
and, for all $x\in B_r$ and $0\leq t\leq \e^{\lambda/\varepsilon}$,
\[
w^\varepsilon(x,t)\leq \e^{-\tilde\mu/\varepsilon}+\frac{2\e^{-\tilde\mu/\varepsilon}}{\varepsilon}\,\e^{\lambda/\varepsilon}
=\e^{-\tilde\mu/\varepsilon}+\frac{2\e^{(\lambda-\tilde\mu)/\varepsilon}}{\varepsilon}. 
\] 
Hence,  
\[
\lim_{\varepsilon\to 0+} U^\varepsilon_+=0 \  \text{ uniformly on } \ B_r\times[0,\,\e^{\lambda/\varepsilon}]. \qedhere
\]
\end{proof}


\section{Asymptotics in a larger time scale}\label{sec:lts}

\noindent The main theorem concerns the behavior of the solutions $u^\varepsilon\in C(\overbar Q)\cap C^{2,1}(Q)$ of 
\begin{equation}\label{100}
\begin{cases}
u_t^\varepsilon=L_\varepsilon u^\varepsilon&\text{ in }\ Q,\\[3pt]
u^\varepsilon=0&\text{ on }\ \partial\varOmega\times(0,\infty),\\[3pt]\displaystyle 
\sup_{\varepsilon>0}\|u^\varepsilon\|_{\infty,Q}<\infty. &
\end{cases}
\end{equation}

\noindent We have:

\begin{thm}\label{thm:lts1} Fix $\lambda>m_0$ and, for $\varepsilon>0$, assume that  $u^\varepsilon\in C(\overbar Q)\cap C^{2,1}(Q)$
solves \eqref{100}.
Then 
\[
\lim_{\varepsilon\to 0+}u^\varepsilon=0 \quad\text{ uniformly on }\ \overbar\varOmega\times[\e^{\lambda/\varepsilon},\,\infty).
\]
\end{thm}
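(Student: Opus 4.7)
The strategy is to convert the auxiliary function $W$ promised by (what will be) Proposition~\ref{prop:lts1} into a classical super-solution $\phi^\varepsilon$ of the parabolic equation whose values on $\partial_{\mathrm p}Q$ dominate $u^\varepsilon$, and whose $t$-profile decays through the threshold $t=\e^{\lambda/\varepsilon}$; then the linear comparison principle finishes the job. I will assume that $W\in C^2(\overbar\varOmega)$ has been constructed with $0<W_{\min}:=\min_{\overbar\varOmega}W\leq W_{\max}:=\max_{\overbar\varOmega}W<\lambda$ and $H(x,-DW)\geq \eta$ on $\overbar\varOmega$ for some $\eta>0$ (after mollification, in the spirit of Theorem~\ref{thm:qp2}, one can work with a smooth approximation and absorb the error into~$\eta$).

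\textbf{Step 1 (test function).} Set $\psi^\varepsilon(x):=1-\e^{-W(x)/\varepsilon}$, which lies in $(0,1)$ on $\overbar\varOmega$. A direct computation yields
\[
L_\varepsilon \psi^\varepsilon
= \e^{-W/\varepsilon}\Bigl(\tr[aD^2W]-\tfrac{1}{\varepsilon}H(x,-DW)\Bigr)
\leq \e^{-W/\varepsilon}\Bigl(C-\tfrac{\eta}{\varepsilon}\Bigr)
\leq -\tfrac{\eta}{2\varepsilon}\,\e^{-W/\varepsilon}
\]
for $\varepsilon$ small, where $C$ bounds $|\tr[aD^2W]|$ on $\overbar\varOmega$. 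Since $w\mapsto \e^{-w/\varepsilon}/(1-\e^{-w/\varepsilon})$ is decreasing, and $W\leq W_{\max}$, it follows that
\[
L_\varepsilon \psi^\varepsilon \leq -\kappa_\varepsilon\,\psi^\varepsilon
\quad\text{on}\ \overbar\varOmega,\qquad
\kappa_\varepsilon:=\tfrac{\eta}{2\varepsilon}\cdot\tfrac{\e^{-W_{\max}/\varepsilon}}{1-\e^{-W_{\max}/\varepsilon}}.
\]

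\textbf{Step 2 (super-solution and comparison).} Let $M:=\sup_{\varepsilon>0}\|u^\varepsilon\|_{\infty,Q}$, which is finite by \eqref{100}, and define $A_\varepsilon:=M/(1-\e^{-W_{\min}/\varepsilon})\to M$ as $\varepsilon\to 0^+$. Then
\[
\phi^\varepsilon(x,t):=A_\varepsilon\,\psi^\varepsilon(x)\,\e^{-\kappa_\varepsilon t}
\]
satisfies $\phi^\varepsilon_t-L_\varepsilon\phi^\varepsilon=-A_\varepsilon\e^{-\kappa_\varepsilon t}\bigl(\kappa_\varepsilon\psi^\varepsilon+L_\varepsilon\psi^\varepsilon\bigr)\geq 0$ in $Q$; further, $\phi^\varepsilon\geq 0=u^\varepsilon$ on $\partial\varOmega\times(0,\infty)$, and $\phi^\varepsilon(x,0)\geq A_\varepsilon(1-\e^{-W_{\min}/\varepsilon})=M\geq u^\varepsilon(x,0)$ on $\overbar\varOmega$. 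The classical parabolic maximum principle yields $u^\varepsilon\leq \phi^\varepsilon$ in $\overbar Q$; applying the same argument to $-u^\varepsilon$ gives $|u^\varepsilon|\leq \phi^\varepsilon$ in $\overbar Q$.

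\textbf{Step 3 (decay on the long time scale).} For $t\geq \e^{\lambda/\varepsilon}$ and any $x\in\overbar\varOmega$,
\[
|u^\varepsilon(x,t)| \leq A_\varepsilon\,\e^{-\kappa_\varepsilon\e^{\lambda/\varepsilon}}.
\]
Since $\kappa_\varepsilon\e^{\lambda/\varepsilon}\sim \tfrac{\eta}{2\varepsilon}\,\e^{(\lambda-W_{\max})/\varepsilon}\to\infty$ as $\varepsilon\to 0^+$ (because $\lambda>W_{\max}$), the right-hand side tends to $0$ doubly exponentially while $A_\varepsilon$ stays bounded. This proves $\lim_{\varepsilon\to 0+}u^\varepsilon=0$ uniformly on $\overbar\varOmega\times[\e^{\lambda/\varepsilon},\infty)$.

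\textbf{Main obstacle.} The non-trivial content of the section lies in Proposition~\ref{prop:lts1}: the construction of $W$ with $W>0$, $\max W<\lambda$, and the strict reverse inequality $H(x,-DW)\geq \eta$ throughout $\overbar\varOmega$. The natural candidate suggested by the first identity of Section~\ref{sec:qp} is $V_\varOmega(\cdot,y)$ for some $y\in \partial\varOmega$ with $V(y)$ slightly larger than $m_0$, since this function satisfies $H(x,-Du)=0$ in $\varOmega\setminus\{y\}$. The difficulties are (i) upgrading this \emph{equation} to the strict \emph{super}-solution inequality $\geq\eta$ by a perturbation that respects the $<\lambda$ bound, (ii) securing strict positivity (the natural choice vanishes at $y$), and (iii) producing a $C^2$ representative via mollification as in Theorem~\ref{thm:qp2}, with constants controlled. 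Once these are in hand, the supersolution argument above is entirely routine.
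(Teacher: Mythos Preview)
Your barrier construction and comparison argument in Steps~1--3 are correct and are essentially the paper's approach: the paper also works with $v^\varepsilon=\e^{-W/\varepsilon}$ and the barrier $1+\e^{-\delta/\varepsilon}-v^\varepsilon-\tfrac{\gamma}{2\varepsilon}\e^{-\mu/\varepsilon}t$, which is your $\psi^\varepsilon$ with a linear rather than exponential decay in $t$. Your exponential profile is if anything a bit cleaner, since it gives a single comparison on all of $[0,\infty)$ instead of a comparison on $[0,T]$ followed by letting a free parameter range.

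The genuine gap is your parenthetical claim that one may take $W\in C^2(\overbar\varOmega)$ ``after mollification, in the spirit of Theorem~\ref{thm:qp2}''. This does \emph{not} work. In Theorem~\ref{thm:qp2} one mollifies a \emph{sub}-solution of a convex Hamilton--Jacobi equation, and Jensen's inequality for $p\mapsto H(x,p)$ gives $H(x,D(\rho*\!V))\leq \rho*H(\cdot,DV)$, preserving the sub-solution inequality. Here you need the \emph{super}-solution inequality $H(x,-DW)\geq\eta$, and Jensen points the wrong way: mollifying a super-solution of a convex equation can destroy it (e.g.\ at an upward corner of a semiconcave $W$, the mollified gradient can vanish, whence $H(x,0)=0<\eta$). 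So there is no reason a $C^2$ function with $H(x,-DW)\geq\eta$ on all of $\overbar\varOmega$ exists, and your computation in Step~1, which uses a classical two-sided bound on $\tr[aD^2W]$, is not justified as written.

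The paper resolves this by \emph{inf}-convolution rather than mollification: inf-convolution preserves the viscosity super-solution property and produces a semiconcave $W$, which yields the one-sided bound $\tr[aD^2W]\leq 1/\eta$ in the viscosity super-solution sense (this is the second inequality in \eqref{eq:lts0}). Your Step~1 only needs this upper bound on $\tr[aD^2W]$, not a two-sided one, so your argument goes through verbatim once you replace the unjustified $C^2$ assumption by the actual output of Proposition~\ref{prop:lts1} and read the inequalities for $\psi^\varepsilon$ (equivalently for $v^\varepsilon$) in the viscosity sense, exactly as the paper does.
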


\noindent The following proposition is a key observation needed to prove Theorem \ref{thm:lts1}.  Its proof is presented later in the section.

\begin{prop}\label{prop:lts1} Let $\lambda>m_0$. There exists $W\in \Lip(\overbar\varOmega)$ 
and  $\eta>0$ such that
\begin{equation}
0<\min_{\overbar\varOmega}W\leq \max_{\overbar\varOmega}W<\lambda,
\end{equation}
and, in the viscosity super-solution sense,  
\begin{equation}\label{eq:lts0}
H(x,-DW)\geq \eta \ \text{ in } \ \varOmega \ \text{and} \  \
\eta\tr[a(x)D^2W(x)]\leq 1 \text{ in } \ \varOmega.
\end{equation}
\end{prop}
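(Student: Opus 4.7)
The plan is to take $W$ to be a small positive shift of the value function of the control problem with the slightly enlarged Lagrangian $L+\delta$, targeting a minimum point of $V$ on $\partial\varOmega$. Fix $x^*\in\argmin(V|\partial\varOmega)$, so $V(x^*)=m_0<\lambda$, and for $\delta,c_0>0$ to be chosen small, define
\begin{equation*}
W_0(x):=\inf\!\left\{\int_0^T\!\bigl(L(X,\dot X)+\delta\bigr)\di t:\ T>0,\ X\in\Lip([0,T],\overbar\varOmega),\ X(0)=x,\ X(T)=x^*\right\},
\end{equation*}
and set $W:=W_0+c_0$.

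The required properties of $W$ then follow from classical facts about value functions. First, $W_0$ is Lipschitz and semi-concave on $\overbar\varOmega$ because $L+\delta$ is uniformly convex in $\dot X$ (using $a\geq\theta I$); the semi-concavity yields $D^2W\leq C_0 I$ in the viscosity super-solution sense for some $C_0>0$, which gives $\eta\,\tr[aD^2W]\leq 1$ once $\eta$ is small. Second, by the dynamic programming principle and Legendre duality---which identifies the Hamiltonian associated to $L+\delta$ as $H(x,p)-\delta$---$W_0$ is a viscosity solution of $H(x,-DW_0)=\delta$ in $\varOmega\setminus\{x^*\}=\varOmega$; hence $W$ is a viscosity super-solution of $H(x,-DW)\geq\eta$ in $\varOmega$ with $\eta:=\delta$. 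Third, $W\geq c_0>0$ trivially from $W_0\geq 0$. For the upper bound, the triangle inequality $V_\varOmega(x,x^*)\leq V_\varOmega(x,0)+V_\varOmega(0,x^*)=0+m_0$---noting $V_\varOmega(\cdot,0)\equiv 0$ via the forward flow---yields $V_\varOmega(\cdot,x^*)\leq m_0$, and a standard stability argument shows $W_0\to V_\varOmega(\cdot,x^*)$ uniformly on $\overbar\varOmega$ as $\delta\to 0+$; thus for $\delta,c_0$ sufficiently small, $\max W\leq m_0+o(1)+c_0<\lambda$.

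The main obstacle is justifying this last uniform convergence. The unperturbed infimum defining $V_\varOmega(x,x^*)$ is typically attained only in the limit $T\to\infty$ along reverse-flow trajectories, so the added running cost $\delta>0$ forces truncation at a large but finite horizon $T(\delta)$; one must estimate the extra cost $\delta T(\delta)$ together with the error from truncating the minimizing sequence at time $T(\delta)$, uniformly in $x\in\overbar\varOmega$. A secondary point is verifying that the state-constrained value function $W_0$ is genuinely a viscosity super-solution of the Hamilton--Jacobi equation throughout $\varOmega$ and not merely away from the boundary target $x^*$, which follows from standard constrained control arguments given that $b$ points inward on $\partial\varOmega$.
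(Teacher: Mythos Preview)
There is a genuine gap: the semi-concavity claim for $W_0$ is false in general, and with it the second inequality in \eqref{eq:lts0} fails. The heuristic ``$L+\delta$ is uniformly convex in $\dot X$, hence the value function is semi-concave'' applies to fixed-horizon problems; for a cost-to-reach-a-single-point functional the semi-concavity constant typically blows up at the target. Take the local model $a=I$, $b\equiv -e_1$ near $x^*$, with $\varOmega$ locally the half-space $\{x_1<x^*_1\}$ (so $b\cdot\nu=-1<0$, consistent with (A5)). A direct computation of the free optimal trajectories gives
\[
W_0(x)=\tfrac{\sqrt{1+4\delta}}{2}\,|x-x^*|\;+\;\tfrac12\,(x^*_1-x_1),
\]
and one checks that these straight-line optimizers stay in $\overbar\varOmega$, so this is also the constrained value. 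Its Hessian has an eigenvalue $\tfrac{\sqrt{1+4\delta}}{2\,|x-x^*|}$ in directions orthogonal to $x-x^*$, hence $\tr[aD^2W_0]\to\infty$ as $x\to x^*$ \emph{from inside} $\varOmega$. No choice of $\eta>0$ makes $\eta\,\tr[aD^2W]\leq 1$ hold throughout the open set $\varOmega$, and adding the constant $c_0$ does not help.

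The paper sidesteps this singularity in two ways that are worth contrasting with your plan. First, instead of targeting a single boundary point it works with the Dirichlet problem $H(x,-Du_\mu)=\mu$ in $\varOmega$ with $u_\mu=0$ on \emph{all} of $\partial\varOmega$; because $H(x,0)=0<\mu$, a clean comparison principle is available, and it delivers both uniqueness and the uniform convergence $u_\mu\to U=\inf_{y\in\partial\varOmega}V_\varOmega(\cdot,y)\leq m_0$ that you flagged as your main obstacle. Second---and this is the step that actually produces the Hessian bound---semi-concavity is not extracted from regularity of the value function at all: the super-solution $u_\mu+\gamma$ is first pushed to an open neighborhood of $\overbar\varOmega$ via a $C^1$ diffeomorphism close to the identity, and only then is an inf-convolution applied. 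The inf-convolution is automatically semi-concave with a constant depending only on the convolution parameter, and it preserves the super-solution inequality up to a small loss precisely because the function has been extended beyond $\overbar\varOmega$. Your construction could in principle be repaired by this same diffeomorphism-plus-inf-convolution device, but one would then have to handle the fact that your $W_0$ is pinned to zero at an isolated boundary point rather than on the whole boundary; at that stage the argument essentially collapses to the paper's.
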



\begin{proof}[Proof of Theorem \ref{thm:lts1}] 
Since $u^\varepsilon$ and $-u^\varepsilon$ both solve \eqref{100}, 
it is enough to show that, for any $\lambda>m_0$, 
\begin{equation}\label{eq:lts1}
\lim_{\varepsilon\to 0+}u^\varepsilon_+=0 \ \text{ uniformly on }\ \overbar\varOmega\times[\e^{\lambda/\varepsilon},\,\infty).
\end{equation}
Moreover, multiplying the $u^\varepsilon$'s  by a positive constant if necessary, we may assume that 
\[
\sup_{\varepsilon>0}\|u^\varepsilon\|_{\infty,Q}\leq 1. 
\]
\smallskip

\noindent Fix any $\lambda>m_0$. 
Let $W\in \Lip(\overbar\varOmega)$ and $\eta>0$ be as in Proposition \ref{thm:lts1} and set 
\[
\delta:=\min_{\overbar\varOmega}W\quad\text{ and }\quad \mu:=\max_{\partial\varOmega}W. 
\]
For $\varepsilon\in(0,\,\eta^2/2)$, set 
\[
v^\varepsilon(x):=\exp\left(-\frac{W(x)}{\varepsilon}\right) \quad\text{ for }\ x\in\overbar\varOmega,
\]
observe, using  \eqref{eq:lts0}, that, in the sub-solution sense, 
\[
\begin{cases}\displaystyle
\frac{\varepsilon^2}{v^{\varepsilon,2}}\,aDv^\varepsilon\cdot Dv^\varepsilon
+\frac{\varepsilon}{v^\varepsilon}\,b\cdot Dv^\varepsilon
\geq\eta &\text{ in }\ \varOmega,\\[3pt]\displaystyle\\[.25mm]
-\frac{\varepsilon}{v^\varepsilon}\tr(aD^2v^\varepsilon)+\frac{\varepsilon}{v^{\varepsilon,2}}\,aDv^\varepsilon\cdot Dv^\varepsilon\leq \frac 1\eta
&\text{ in }\ \varOmega,
\end{cases}
\]
and, consequently,
\[
L_\varepsilon v^\varepsilon\geq \frac{v^\varepsilon}{\varepsilon}\left(-\frac{\varepsilon}{\eta}+\eta\right)
\geq \frac{\eta v^\varepsilon}{2\varepsilon}
 \ \ \ \text{ in }\ \varOmega.
\]
Note also that 
\[
\e^{-\mu/\varepsilon}\leq v^\varepsilon\leq \e^{-\delta/\varepsilon} \  \text{in  } \ \overbar\varOmega. 
\]

\noindent Next we fix some $\gamma\in(0,\,\eta]$, set,  for $ (x,t)\in\overbar Q$, 
\[
w^\varepsilon(x,t):=1+\e^{-\delta/\varepsilon}-v^\varepsilon(x)-\frac{\gamma}{2\varepsilon}\e^{-\mu/\varepsilon}\,t
\]
and observe that 
\[
w_t^\varepsilon-L_\varepsilon w^\varepsilon
=-\frac{\gamma}{2\varepsilon}\e^{-\mu/\varepsilon}+L_\varepsilon v^\varepsilon
\geq -\frac{\gamma}{2\varepsilon}\e^{-\mu/\varepsilon}+\frac{\eta v^\varepsilon}{2\varepsilon}\geq 0 \ \ \ \text{ in }\ Q,
\]
and  
\[\begin{cases}
w^\varepsilon(x,0)\geq 1 &\text{ for all } \ x\in\overbar\varOmega,\\[3pt]\displaystyle
w^\varepsilon(x,t)\geq 1-\frac{\gamma}{2\varepsilon}\e^{-\mu/\varepsilon}\,t&\text{ for all }\ (x,t) \in\partial\varOmega\times [0,\,\infty), \\[3pt]\displaystyle
w^\varepsilon(x,t)\leq 1+\e^{-\delta/\varepsilon}-\frac{\gamma}{2\varepsilon}\e^{-\mu/\varepsilon}\,t&\text{ for all }\ (x,t) \in\overbar\varOmega\times [0,\,\infty).  
\end{cases}
\]
Then for 
$T:=\frac{2\varepsilon}{\gamma}\,\e^{\mu/\varepsilon},$
we have 
\[
u^\varepsilon\leq w^\varepsilon\ \ \ \text{ on  } \  (\overbar \varOmega\times\{0\})\ \cup
\ (\partial\varOmega\times(0,\,T)),
\]
and, by the comparison principle, 
\[
u^\varepsilon\leq w^\varepsilon\ \ \ \text{ on } \ \overbar \varOmega\times[0,\,T],
\]
and, in particular,  
\[
u^\varepsilon(x,T)\leq \e^{-\delta/\varepsilon} \ \ \ \text{ for all }\ x\in\varOmega. 
\]
Since $\gamma\in(0,\,\eta]$ is arbitrary, it follows that 
\[
u^\varepsilon\leq \e^{-\delta/\varepsilon} \  \text{ on  } \  \overbar \varOmega\times[(2\varepsilon/\eta)\e^{\mu/\varepsilon},\,\infty),
\]
from which we conclude that \eqref{eq:lts1} holds.  
\end{proof}

\noindent The proof of Proposition \ref{prop:lts1} requires a number of technical facts which we state and prove first. To this end, 
we introduce the function $U\in\Lip(\overbar\varOmega)$ given, for 
$x\in\overbar\varOmega$,  by 
\[ 
U(x):=\inf\{V_\varOmega(x,y)\mid y\in\partial\varOmega\}
=\min\{V_\varOmega(x,y)\mid y\in\partial\varOmega\}. 
\] 
Indeed, since $\,H(x,-D_xV_\varOmega(x,y))\leq 0$ for a.e. $x\in\varOmega$,  
the collection 
$\{V_\varOmega(\cdot \,,y): y\in\partial\varOmega\}$ is equi-Lipschitz continuous on $\overbar\varOmega$. Hence, $U$ 
is Lipschitz continuous on $\overbar\varOmega$ and, for each $x\in\overbar\varOmega$, 
the minimum in the above formula is achieved at a point $y\in\partial\varOmega$. 
It follows 
from the theory of viscosity solutions 
that $U$ is a super-solution of 
\begin{equation}\label{eq:lts2}
H(x,-DU)=0 \ \ \ \text{ in } \ \varOmega, 
\end{equation}
and, moreover, satisfies $$H(x,-DU(x))\leq 0 \ \text{a.e. in
 $\varOmega$,}$$ 
which ensures that $U$ is a solution of \eqref{eq:lts2}.

\begin{lem}\label{lem:lts1}The function $U$ is the maximal sub-solution of 
\begin{equation}\label{eq:lts3}
\begin{cases}
H(x,-Du)=0 &\text{ in } \ \varOmega,\\[3pt]
u=0 &\text{ on } \ \partial\varOmega,
\end{cases}\end{equation}
and 
\[
0\leq U\leq U(0)=m_0 \ \text{ on } \ \overbar \varOmega.
\]
\end{lem}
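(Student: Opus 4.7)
The plan is to verify the four assertions of the lemma --- that $U=0$ on $\partial\varOmega$, that $0\leq U\leq m_0$, that $U(0)=m_0$, and that $U$ dominates every other sub-solution of \eqref{eq:lts3} --- by direct appeal to the definition of $U$ and the representation formula \eqref{eq:qp1} for $V_\varOmega$. The sub-solution property of $U$ and the Lipschitz regularity have already been recorded in the discussion preceding the lemma, so nothing needs to be redone there.

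First I would collect the elementary bounds. Since $V_\varOmega\geq 0$ we immediately get $U\geq 0$ on $\overbar\varOmega$. Applying the triangle inequality $V_\varOmega(x,y)\leq V_\varOmega(x,0)+V_\varOmega(0,y)$ together with the identity $V_\varOmega(x,0)=0$ (established in the text via $X(t;x)\to 0$) gives, for each $y\in\partial\varOmega$,
\[
V_\varOmega(x,y)\leq V(y).
\]
Minimising over $y\in\partial\varOmega$ yields $U(x)\leq m_0$ for every $x\in\overbar\varOmega$. Evaluating at $x=0$ produces equality, because $V_\varOmega(0,y)=V(y)$ by definition of $V$, so $U(0)=\min_{\partial\varOmega}V=m_0$. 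The boundary vanishing $U=0$ on $\partial\varOmega$ is immediate from $V_\varOmega(y,y)=0$.

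The main step is maximality. Let $u\in C(\overbar\varOmega)$ be any sub-solution of \eqref{eq:lts3}; that is $u\in\cS^-(\varOmega)$ with $u\leq 0$ on $\partial\varOmega$. Using the representation \eqref{eq:qp1}, for every $x\in\overbar\varOmega$ and every $y\in\partial\varOmega$,
\[
V_\varOmega(x,y)\;\geq\; u(x)-u(y)\;\geq\; u(x),
\]
the last step because $u(y)\leq 0$. Taking the infimum over $y\in\partial\varOmega$ gives $U(x)\geq u(x)$, which is exactly the asserted maximality.

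No substantive obstacle is anticipated. The only point requiring a small verification is that \eqref{eq:qp1} applies to the whole class of sub-solutions $u\in\cS^-(\varOmega)$, but this is precisely the content of the remark in Section \ref{sec:qp} that coercivity of $H$ forces $\cS^-(\varOmega)\subset\Lip(\overbar\varOmega)$, so $u$ is an admissible competitor in the supremum defining $V_\varOmega$.
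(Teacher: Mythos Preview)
Your proof is correct and matches the paper's own argument essentially step for step: both use the representation \eqref{eq:qp1} to get $u(x)-u(y)\leq V_\varOmega(x,y)$ for any competitor $u\in\cS^-(\varOmega)$ and deduce maximality, and both use the triangle inequality together with $V_\varOmega(x,0)=0$ to obtain $U\leq U(0)=m_0$. Your explicit verification of $U=0$ on $\partial\varOmega$ via $V_\varOmega(y,y)=0$ is a small clarification the paper leaves implicit.
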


\begin{proof} 
Let $u \in C(\overbar\varOmega)$ be a sub-solution of \eqref{eq:lts3}. By \eqref{eq:qp1},
we have
\[
u(x)\leq V_\varOmega(x,y) \ \ \ \text{ for all } \ x\in\overbar\varOmega,\,y\in\partial\varOmega, 
\] 
and, hence, 
\[
u\leq U \ \text{ on } \ \overbar\varOmega. 
\]
Since $U$ is a solution of \eqref{eq:lts3}, this last observation yields the first part of the claim.
\smallskip

\noindent  Moreover,  
\[
U(0)=\inf\{V_\varOmega(0,y)\mid y\in\partial\varOmega\}=\min_{\partial\varOmega}V=m_0. 
\]
Next we recall that $V_\varOmega(x,0)=0$ for all $x\in\overbar\varOmega$. Hence,
\[
V_\varOmega(x,y)\leq V_\varOmega(x,0)+V_\varOmega(0,y)=V_\varOmega(0,y) \ \ \ \text{ for all } \ y\in\overbar\varOmega.
\]
Taking infimum over all $y\in\partial\varOmega$, we find
$U\leq U(0) \  \text{ on } \  \overbar \varOmega$, and the proof is now complete. 
\end{proof}

\begin{lem}\label{lem:lts2} For each $\gamma>0$, there exists a unique solution $u\in\Lip(\overbar\varOmega)$ 
of 
\begin{equation}\label{eq:lts4}
\begin{cases} 
H(x,-Du)=\gamma &\text{ in }\ \varOmega,\\[3pt]
u=0 &\text{ on } \ \partial\varOmega. 
\end{cases}
\end{equation}
\end{lem}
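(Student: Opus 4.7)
Substituting $v := -u$, the lemma becomes equivalent to showing existence and uniqueness of $v \in \Lip(\overbar\varOmega)$ solving the Dirichlet problem $H(x, Dv) = \gamma$ in $\varOmega$ with $v = 0$ on $\partial\varOmega$. My plan is to construct $v$ explicitly as the value function
\[
v(x) := \inf\Big\{\int_0^T [L(X(t), \dot X(t)) + \gamma]\,\di t : T > 0,\ X \in \Lip([0,T], \overbar\varOmega),\ X(0) = x,\ X(T) \in \partial\varOmega\Big\},
\]
and to obtain uniqueness via a comparison argument exploiting the convexity of $H(x,\cdot)$ and the strict positivity of $\gamma$.

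For existence, nonnegativity of $v$ is immediate, and $v = 0$ on $\partial\varOmega$ follows by taking the constant curve $X \equiv x$ and letting $T \to 0^+$. A uniform $L^\infty$ bound on $\overbar\varOmega$ follows from compactness and path-connectedness of $\overbar\varOmega$ (choose, for each $x$, a fixed-length Lipschitz curve in $\overbar\varOmega$ terminating on $\partial\varOmega$, along which $L$ is bounded via (A1)--(A2)). Lipschitz regularity of $v$ comes from a standard concatenation argument: for $x, y \in \overbar\varOmega$, prepending a short curve from $x$ to $y$ (which can be kept inside $\overbar\varOmega$ by the $C^1$-regularity of the boundary) to an almost-optimal curve for $v(y)$ produces an admissible competitor for $v(x)$ whose extra cost is $O(|x - y|)$ on account of the quadratic growth of $L$ in $\dot X$ and its Lipschitz dependence on $X$. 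The dynamic programming principle, together with the convexity and coercivity of $H$ in $p$, then gives in the classical way that $v$ is a viscosity solution of $H(x, Dv) = \gamma$ in $\varOmega$. Setting $u := -v$ produces the desired solution.

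For uniqueness, let $v_1, v_2 \in \Lip(\overbar\varOmega)$ be two solutions of the transformed problem. Fix $\delta \in (0, 1)$ and set $v_1^\delta := (1 - \delta) v_1$. Since $H(x, 0) = 0$, the convexity of $H(x, \cdot)$ implies that $v_1^\delta$ is a viscosity sub-solution of $H(x, Dv) = (1 - \delta)\gamma$. I then apply the standard doubling-of-variables argument to $\Phi_\varepsilon(x, y) := v_1^\delta(x) - v_2(y) - |x - y|^2/(2\varepsilon)$ on $\overbar\varOmega \times \overbar\varOmega$. Coercivity of $H$ in $p$ bounds $|p_\varepsilon| := |x_\varepsilon - y_\varepsilon|/\varepsilon$ at the maximum uniformly in $\varepsilon$, and the sub/super viscosity inequalities combine with Lipschitz continuity of $a, b$ to yield
\[
\delta \gamma \leq H(y_\varepsilon, p_\varepsilon) - H(x_\varepsilon, p_\varepsilon) \to 0 \quad \text{as} \quad \varepsilon \to 0^+,
\]
ruling out interior maxima for small $\varepsilon$. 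As $v_1^\delta = v_2 = 0$ on $\partial\varOmega$, this forces $v_1^\delta \leq v_2$ throughout $\overbar\varOmega$; letting $\delta \to 0^+$ gives $v_1 \leq v_2$, and the symmetric argument yields equality. The principal obstacle is the quadratic growth of $H$ in $p$, which makes the standard Lipschitz-in-$x$ structure condition for comparison fail; it is tamed by the coercivity-induced a priori Lipschitz bound on viscosity sub-solutions, which confines the relevant gradients to a compact set, together with the strict-inequality trick $(1 - \delta) v_1$ that compensates for the lack of $u$-dependence in the equation.
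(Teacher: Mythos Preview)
Your argument is correct. The uniqueness half is essentially the same as the paper's: both rely on the convexity of $H(x,\cdot)$ together with the strict inequality $H(x,0)=0<\gamma$, which is exactly the standard ``strict sub-solution'' trick you implement via $(1-\delta)v_1$. The paper simply cites this comparison result from the literature (stated separately as a lemma), whereas you sketch the doubling-of-variables proof; your remark that coercivity confines the test gradients to a compact set is precisely what makes the argument go through despite the quadratic growth of $H$ in $p$.

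Where you genuinely diverge is in the existence part. The paper uses Perron's method: it observes that $u\equiv 0$ is a sub-solution (since $H(x,0)=0<\gamma$) and that $f(x):=M\dist(x,\partial\varOmega)$ is a super-solution for $M$ large enough that $H(x,p)\geq\gamma$ whenever $|p|\geq M$, and then invokes Perron directly. Your route is the optimal-control representation: you build $v$ as the value function of the exit-time problem with running cost $L+\gamma$, and derive the viscosity property from the dynamic programming principle. Both are standard and valid. The Perron approach is shorter and avoids any regularity discussion beyond exhibiting the two barriers; your approach is more constructive and yields a representation formula for the solution, at the cost of having to verify the Lipschitz bound and the DPP by hand.
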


\begin{proof}  Choose $M>0$ such that 
\[
H(x,p)\geq \gamma \  \text{ for all } \ (x,p)\in\overbar\varOmega\times(\R^n\setminus B_M).  
\] 
It is easy to check that 
$f(x):=M\dist(x,\partial\varOmega) $
is a super-solution of \eqref{eq:lts4}. 
It is also obvious that 
$0$ is a sub-solution of \eqref{eq:lts4}. Perron's method now implies  that 
there exists a solution $u\in\Lip(\overbar\varOmega)$ of \eqref{eq:lts4}. 
\smallskip

\noindent  Note that $H(x,0)=0<\gamma$ for all $x\in\varOmega$ and recall that $p\mapsto H(x,p)$ is 
convex for any $x\in\varOmega$. Under these conditions, the uniqueness follows from a well known comparison (see e.g. 
\cite{BC1997,Ba1994, Is1987}) 
which we state below as a separate lemma without proof. 

\end{proof} 

\begin{lem}\label{comparison} Let $\gamma>0$. 
If $u\in C(\overbar\varOmega)$ (resp. $v\in C(\overbar\varOmega)$) is a sub-solution (resp. 
super-solution) of $H(x,-Dw(x))=\gamma$ in $\varOmega$ and $u\leq v$ on $\partial\varOmega$, then 
$u\leq v$ in $\varOmega$. 
\end{lem}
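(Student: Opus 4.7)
My plan is to prove Lemma~\ref{comparison} by the textbook comparison argument for convex coercive first-order Hamilton--Jacobi equations. The strategy has three moves: first, exploit the convexity of $p \mapsto H(x,p)$ and the fact that $H(x,0) = 0 < \gamma$ to replace $u$ by a strict sub-solution $u_\theta$ whose boundary values are strictly below those of $v$; second, compare $u_\theta$ with $v$ by doubling of variables; third, let $\theta \uparrow 1$.

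For the first step, I fix $C > \|u\|_{\infty,\overbar\varOmega}$ and, for $\theta \in (0,1)$, set $u_\theta := \theta u - (1-\theta)C$. Convexity of $H(x,\cdot)$ follows from $a(x) \geq \theta I$ (so $p \mapsto a(x)p\cdot p$ is convex) together with the linearity of $p \mapsto b(x)\cdot p$. A standard test-function manipulation --- if $u_\theta - \xi$ has a local max at $x_0$, then $u - \theta^{-1}(\xi + (1-\theta)C)$ does too, and then one writes $-D\xi(x_0) = \theta(-\theta^{-1}D\xi(x_0)) + (1-\theta)\cdot 0$ and uses convexity of $H(x_0,\cdot)$ with $H(x_0,0)=0$ --- shows $u_\theta$ is a viscosity sub-solution of $H(x,-Du_\theta) \leq \theta\gamma$ in $\varOmega$. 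Since $u \leq v$ on $\partial\varOmega$, I get $u_\theta - v \leq -(1-\theta)(u+C) < 0$ on $\partial\varOmega$. If I can show $u_\theta \leq v$ in $\overbar\varOmega$ for every such $\theta$, then sending $\theta \to 1$ gives $u \leq v$.

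For the comparison of $u_\theta$ with $v$, suppose for contradiction that $M := \max_{\overbar\varOmega}(u_\theta - v) > 0$; the strict boundary inequality forces the maximum to be attained at some $\hat x \in \varOmega$. Consider
\[
\Phi_\alpha(x,y) := u_\theta(x) - v(y) - \tfrac{\alpha}{2}|x-y|^2
\]
on $\overbar\varOmega \times \overbar\varOmega$, let $(x_\alpha,y_\alpha)$ be a maximizer, and apply the classical penalization estimates to conclude that $\alpha|x_\alpha - y_\alpha|^2 \to 0$ and $x_\alpha, y_\alpha \to \hat x$, so $(x_\alpha,y_\alpha) \in \varOmega \times \varOmega$ for all large $\alpha$. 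Writing $p_\alpha := \alpha(x_\alpha - y_\alpha)$, the viscosity inequalities yield
\[
H(x_\alpha,-p_\alpha) \leq \theta\gamma \quad \text{and} \quad H(y_\alpha,-p_\alpha) \geq \gamma.
\]
The coercivity $H(x,p) \geq \theta|p|^2 - \|b\|_{\infty,\overbar\varOmega}|p|$, inherited from uniform ellipticity of $a$, bounds $\{p_\alpha\}$; Lipschitz continuity of $a$ and $b$ then gives uniform continuity of $H$ in $x$ on bounded $p$-sets. Subtracting the two inequalities produces $(1-\theta)\gamma \leq H(y_\alpha,-p_\alpha) - H(x_\alpha,-p_\alpha) \to 0$ as $\alpha \to \infty$, a contradiction.

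The step that will require the most care is the viscosity justification of the strict sub-solution property for $u_\theta$, since the convex-combination trick has to be carried out at the level of test functions (not pointwise) because $u$ is only continuous. Once this is in place, the doubling-of-variables step is entirely routine: coercivity handles the boundedness of $p_\alpha$ and joint continuity of $H$ handles the passage to the limit. No second-order considerations and hence no use of Ishii's lemma are needed, because the equation is first order.
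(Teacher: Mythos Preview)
Your proof is correct and follows the standard convex-Hamiltonian comparison argument. The paper does not actually prove this lemma: it is stated without proof and attributed to the references \cite{BC1997, Ba1994, Is1987}, precisely because the argument you give (convex combination with the zero sub-solution to manufacture a strict sub-solution, followed by doubling of variables and coercivity to bound the penalized gradients) is the classical one found in those sources. So there is nothing to compare---you have supplied the textbook proof that the authors chose to omit.
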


\noindent We continue with 
\begin{lem}\label{lem:lts3} For each $\gamma>0$ let $u_\gamma\in\Lip(\overbar\varOmega)$ be the solution 
of \eqref{eq:lts4}. Then 
\begin{equation}\label{eq:lts5}
\lim_{\gamma\to 0}u_\gamma=U  \ \text{ uniformly on } \ \overbar\varOmega. 
\end{equation}
\end{lem}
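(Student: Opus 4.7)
The plan is to combine monotonicity, a uniform Lipschitz bound via coercivity, and stability of viscosity solutions, then invoke the maximality of $U$ from Lemma \ref{lem:lts1} to pin down the limit.

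First I would establish two monotonicity facts using the comparison principle of Lemma \ref{comparison}. For $0<\gamma_1<\gamma_2$, the solution $u_{\gamma_2}$ of $H(x,-Du)=\gamma_2$ is automatically a super-solution of $H(x,-Du)=\gamma_1$, so $u_{\gamma_1}\leq u_{\gamma_2}$ on $\overbar\varOmega$; in particular $\gamma\mapsto u_\gamma$ is decreasing as $\gamma\to 0^+$. Similarly, since $U$ is a sub-solution of $H(x,-Du)=0\leq \gamma$ with $U=0$ on $\partial\varOmega$, Lemma \ref{comparison} gives $U\leq u_\gamma$ on $\overbar\varOmega$ for every $\gamma>0$.

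Next I would show equi-Lipschitz continuity of $\{u_\gamma\}_{\gamma\in(0,1]}$. Because $H(x,p)=a(x)p\cdot p+b(x)\cdot p$ is coercive uniformly in $x\in\overbar\varOmega$ (using (A2)), one can fix $M>0$ so that $H(x,p)\geq 2$ whenever $|p|\geq M$ and $x\in\overbar\varOmega$. Any viscosity sub-solution of $H(x,-Du)=1$ is then Lipschitz with constant $M$, and since each $u_\gamma$ (for $\gamma\leq 1$) is in particular such a sub-solution, the family is equi-Lipschitz on $\overbar\varOmega$ with bound $M$ independent of $\gamma$.

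Combining the two, as $\gamma\to 0^+$ the pointwise decreasing limit $u_0:=\lim_{\gamma\to 0^+}u_\gamma$ exists, satisfies $U\leq u_0$ by the lower bound, and is Lipschitz with constant at most $M$. Pointwise monotone convergence of equi-Lipschitz functions on the compact set $\overbar\varOmega$ yields uniform convergence (e.g.\ by Dini's theorem, once one notes the limit is continuous). By the standard stability result for viscosity solutions under uniform convergence, $u_0$ is a solution of $H(x,-Du_0)=0$ in $\varOmega$ with $u_0=0$ on $\partial\varOmega$; in particular it is a sub-solution of the problem \eqref{eq:lts3}, so by the maximality part of Lemma \ref{lem:lts1} we obtain $u_0\leq U$. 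Together with $U\leq u_0$ this gives $u_0=U$, establishing \eqref{eq:lts5}.

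The only delicate step is the equi-Lipschitz bound: it is not automatic from Lemma \ref{lem:lts2} (which only asserts Lipschitz regularity for each fixed $\gamma$), and its uniformity in $\gamma$ is exactly what makes the stability argument work. Coercivity of $H$ makes this step clean, so the main conceptual point of the proof is really the identification of the decreasing limit with the maximal sub-solution $U$.
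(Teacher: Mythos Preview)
Your proof is correct and follows essentially the same approach as the paper: monotonicity in $\gamma$ via Lemma~\ref{comparison}, equi-Lipschitz continuity from coercivity (by viewing all $u_\gamma$ with $\gamma\in(0,1]$ as sub-solutions at level $\gamma=1$), stability of viscosity solutions to identify the limit as a solution of \eqref{eq:lts3}, and then the two-sided inequality $U\leq u_0\leq U$ using first $U\leq u_\gamma$ and then the maximality of $U$ from Lemma~\ref{lem:lts1}. The only cosmetic differences are the order in which you establish $U\leq u_\gamma$ and the explicit appeal to Dini's theorem for uniform convergence.
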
 

\begin{proof} Note that if $0<\gamma_1<\gamma_2$, then $u_{\gamma_1}$ is a subsolution of 
\eqref{eq:lts4} with $\gamma=\gamma_2$. Therefore the comparison yields 
\[
u_{\gamma_1}\leq u_{\gamma_2} \  \text{ on   } \  \overbar\varOmega. 
\]
Observe also that the $u_\gamma$'s , with $\gamma\in(0,\,1)$, are sub-solutions to  \eqref{eq:lts4} 
with $\gamma=1$ and, therefore,  the collection $\{u_\gamma\}_{\gamma\in(0,\,1)}$ is equi-Lipschitz 
on $\overbar\varOmega$.  It follows that there exists some $u\in\Lip(\overbar\varOmega)$ such that  
\begin{equation}\label{eq:lts6}
\lim_{\gamma\to 0}u_\gamma=u \  \text{ uniformly on  } \ \overbar\varOmega \  \text{ and } \  u=0  \ \text{ on } \  \partial\varOmega.
\end{equation}  

\noindent The stability  of viscosity solutions yields  that $u$ is a solution of \eqref{eq:lts3},
and, moreover, by the maximality of $U$, that $U\geq u$ on $\overbar\varOmega$. 
\smallskip

\noindent Note also that $U$ is a sub-solution of \eqref{eq:lts4} with $\gamma>0$. Hence,
$U\leq u_\gamma \  \text{ on } \ \overbar\varOmega$ and, therefore, 
and $U\leq u$ on  $\overbar\varOmega$.
\smallskip

\noindent Thus we conclude that 
$u=U$ on $\overbar\varOmega$ and \eqref{eq:lts5} holds.     
\end{proof} 

\noindent We are now in a position to present the 

\begin{proof}[Proof of Proposition \ref{prop:lts1}] Fix $\gamma>0$. It follows from Lemma \ref{lem:lts3} that, 
if $\mu \in (0,\gamma)$ is sufficiently small, then the solution $u_\mu\in\Lip(\overbar\varOmega)$ of \eqref{eq:lts4},  
with $\gamma$ replaced by $\mu$, satisfies 
\[
\|U-u_\mu\|_{\infty,\varOmega}<\gamma,
\]
and, moreover, $0\leq u_\mu(x)<m_0+\gamma$ for all $x\in\overbar\varOmega$. 
\smallskip

\noindent  For $x\in\overbar\varOmega$ set 
\[
W(x):=u_\mu(x)+\gamma, 
\]
fix any $\delta>0$ and, as in the proof of Theorem \ref{thm:qp2}, choose a $C^1$-diffeomorphism\\
$\Phi_\delta:\R^n\to\R^n$ so that 
\[
\Phi_\delta(\overbar\varOmega)\subset\varOmega, \  
\|D\Phi_\delta(x)-I\|_{\infty,\R^n}<\delta \ \text{and}  \ \Phi(0)=0,
\]
Let
\[
W_\delta:=W\circ\Phi_\delta
\] 
and note that $\Phi_\delta^{-1}(\varOmega)$ is an open neighborhood of $\overbar\varOmega$. 
We deduce that, if $\delta>0$ is sufficiently small, then 
\[
H(x,-DW_\delta)\geq \mu/2  \ \text{ in } \ \Phi_\delta^{-1}(\varOmega) \ \text{ and} \ \gamma\leq W_\delta\leq m_0+2\gamma \ \text{ on } \ \overbar\varOmega.
\]

\noindent For $\alpha>0$ small, we introduce the inf-convolution $W_{\delta,\alpha}$ of  $W_\delta$, given, for $x\in\R^n$, by
\[
W_{\delta,\alpha}(x):=\inf\{W_\delta(y)+\frac{1}{\alpha}|x-y|^2\mid y\in\Phi_\delta^{-1}(\varOmega)\}. 
\]
As is well-known (see, for example, \cite{Ba1994}), $W_{\delta,\alpha}$ is semi-concave in $\Phi_\delta^{-1}(\varOmega)$, that is
\[
\max\{D^2W_{\delta,\alpha}(x)\xi\cdot\xi\mid \xi\in B_1\}\leq C_{\delta,\alpha} \ \ \ \text{ in } \Phi_\delta^{-1}(\varOmega)
\]
holds in the super-solution sense for some constant $C_{\delta,\alpha}$, depending on $\delta,\,\alpha$,
and, if $\alpha>0$ is sufficiently small, then 
\[
\|W_\delta-W_{\delta,\alpha}\|_{\infty,\varOmega}<\gamma/2 \ \ \  \text{and} \ \ \  H(x,-DW_{\delta,\alpha})\geq \mu/4   \ \text{ in} \ \varOmega,\]
where the latter inequality holds in the super-solution sense. 
It is then easily checked that $W_{\delta,\alpha}$ satisfies, in the super-solution sense, 
\[
\tr \left[aD^2W_{\delta,\alpha}\right]\leq C_{\delta,\alpha}\tr a \ \ \ \text{ in }\ \varOmega.
\]
Thus, noting that 
\[
\gamma/2\leq W_{\delta,\alpha}\leq m_0+3\gamma \ \ \ \text{ in } \  \varOmega,
\]
and choosing $\,\gamma>0\,$ and $\,\eta>0\,$ so small that 
$\,m_0+3\gamma<\lambda$, 
$\,\eta C_{\delta,\alpha}\|\tr a\|_{\infty,\varOmega}\leq 1\,$ 
and $\,\eta \leq \mu/4$,
we conclude that $W:=W_{\delta,\alpha}$ and $\eta$ have the required properties. 
\end{proof}


\section{The Stationary problem}  \label{sec:sp}

\noindent We consider the Dirichlet problem
\begin{equation}\label{eq:sp1}
\begin{cases}
L_\varepsilon v^\varepsilon=0 &\text{ in } \ \varOmega,\\[3pt]
v^\varepsilon=g &\text{ on }\ \partial\varOmega,
\end{cases}
\end{equation}
for
\begin{equation}\label{g} 
g\in C(\overbar\varOmega) \ \text{ such that} \ g=0  \ \text{ on } \ \argmin(V|\partial\varOmega).
\end{equation}

\noindent The next result is an essential part of a classical observation obtained by  
Freidlin-Wentzell \cite{FW2012}, Devinatz-Friedman \cite{DF1978}, Kamin \cite{Ka1978,Ka1979}, Perthame \cite{Pe1990}, etc..  


\begin{thm}\label{thm:sp1} Assume (A1)--(A5) and \eqref{g}.  Then 
$\lim_{\varepsilon\to 0+}v^\varepsilon(0)=0.$
\end{thm}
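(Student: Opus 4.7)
My plan is to establish $v^\varepsilon(0)\to 0$ via the comparison principle. I will construct, for each $\delta>0$, a viscosity super-solution $\widetilde\Phi^\varepsilon$ of $L_\varepsilon u=0$ in $\varOmega$ satisfying $\widetilde\Phi^\varepsilon\geq g$ on $\partial\varOmega$ and $\limsup_{\varepsilon\to 0+}\widetilde\Phi^\varepsilon(0)\leq\delta$; the comparison principle then yields $v^\varepsilon(0)\leq\delta+o(1)$, and letting $\delta\to 0$ gives the upper bound. By linearity I decompose $g=g_+-g_-$ with $g_\pm\geq 0$ continuous and vanishing on $\argmin(V|\partial\varOmega)$, reducing to $g\geq 0$; for this case $v^\varepsilon\geq 0$ by the maximum principle, so the matching lower bound is automatic.

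Given $\delta>0$, continuity of $g$ together with $g=0$ on $\argmin(V|\partial\varOmega)$ yields an open neighborhood $U\subset\partial\varOmega$ of $\argmin(V|\partial\varOmega)$ with $g\leq\delta$ on $U$; on $F:=\partial\varOmega\setminus U$, compactness gives $V\geq m_0+\alpha$ for some $\alpha>0$. For small $r>0$, Theorem \ref{thm:qp2} supplies $W_r\in C^2(\overbar\varOmega)$ with $H(x,DW_r)\leq-\eta$ in $\varOmega\setminus B_r$, $H(x,DW_r)\leq 1$ in $B_r$, and $\|W_r-V\|_{\infty,\varOmega}<r$. The Lipschitz bound on $V$ gives $\max_{\overbar B_r}W_r=O(r)$, so the interval $(\max_{\overbar B_r}W_r,\,m_0+\alpha-r)$ is nonempty and I pick $C$ inside it. Setting
\[
\Phi^\varepsilon(x):=\delta+\|g\|_{\infty,\varOmega}\exp\bigl((W_r(x)-C)/\varepsilon\bigr),
\]
one checks $\Phi^\varepsilon\geq g$ on $\partial\varOmega$ (on $F$, $W_r\geq m_0+\alpha-r\geq C$ makes the exponential $\geq 1$; on $U$, $\Phi^\varepsilon\geq\delta\geq g$) and $\Phi^\varepsilon(0)\to\delta$ (since $W_r(0)<C$). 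The computation
\[
L_\varepsilon\Phi^\varepsilon=\frac{\|g\|_{\infty,\varOmega}}{\varepsilon}e^{(W_r-C)/\varepsilon}\bigl[H(x,DW_r)+\varepsilon\tr(aD^2W_r)\bigr]
\]
shows $\Phi^\varepsilon$ is a classical super-solution in $\varOmega\setminus B_r$ for $\varepsilon$ small enough (the bracket is $\leq-\eta/2$ there).

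The main obstacle is in the region $B_r$, where the bracket is only bounded and may be positive, so $\Phi^\varepsilon$ fails to be a super-solution there; however the prefactor is at most $e^{-\kappa/\varepsilon}$ with $\kappa:=C-\max_{\overbar B_r}W_r>0$, and therefore the positive error is bounded by $K\varepsilon^{-1}e^{-\kappa/\varepsilon}$ in $B_r$. I plan to absorb this exponentially small error by adding a nonnegative correction $\chi^\varepsilon$ built from the function $W\in\Lip(\overbar\varOmega)$ of Proposition \ref{prop:lts1}, exploiting the sub-solution inequality $L_\varepsilon e^{-W/\varepsilon}\geq(\eta'/(2\varepsilon))e^{-W/\varepsilon}$ established in the proof of Theorem \ref{thm:lts1}. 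Taking $\chi^\varepsilon(x):=B\bigl(e^{-\min_{\overbar\varOmega}W/\varepsilon}-e^{-W(x)/\varepsilon}\bigr)\geq 0$ with $B$ of order $e^{(\max_{\overbar\varOmega}W-\kappa)/\varepsilon}$ ensures $L_\varepsilon(\Phi^\varepsilon+\chi^\varepsilon)\leq 0$ throughout $\varOmega$, while $\chi^\varepsilon(0)\to 0$ provided $\kappa>\max_{\overbar\varOmega}W-\min_{\overbar\varOmega}W$. This last condition can be arranged by taking $r$ and the free parameter $\gamma$ in Proposition \ref{prop:lts1} sufficiently small, so that $\kappa$ is close to $m_0+\alpha$ while $\max W-\min W$ stays near its natural value of order $m_0$. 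The delicate bookkeeping is to coordinate the choices of $r$, $\gamma$, and $C$ so that every inequality holds simultaneously; once this is done, comparison yields $v^\varepsilon(0)\leq\Phi^\varepsilon(0)+\chi^\varepsilon(0)\to\delta$ and the conclusion follows.
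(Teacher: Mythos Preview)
Your approach is correct and, once the parameters are coordinated as you describe, yields a valid proof; however it is genuinely different from the paper's argument. The paper does \emph{not} build a stationary super-solution. Instead it embeds the elliptic problem into the parabolic machinery already developed: for $\mu>m_0$ it introduces an auxiliary parabolic solution $u^\varepsilon$ with carefully chosen data $h$, applies Theorem~\ref{thm:lts1} to obtain $u^\varepsilon\to 0$ on $\overbar\varOmega\times[\e^{\mu/\varepsilon},\infty)$, and then applies Theorem~\ref{thm:sts1} to $w^\varepsilon(x,t):=v^\varepsilon(x)-\delta-u^\varepsilon(x,t)$ to get $(w^\varepsilon-\delta)_+\to 0$ on $B_r\times[0,\e^{\mu/\varepsilon}]$; evaluating at $t=\e^{\mu/\varepsilon}$ gives the result. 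Thus the paper trades the elliptic barrier construction for a time-variable trick that reuses two theorems verbatim.

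Your route is in a sense more direct (purely elliptic, no auxiliary time variable) but draws on the same two building blocks in disguise: $W_r$ from Theorem~\ref{thm:qp2} plays the role of Theorem~\ref{thm:sts1}, and the function $W$ from Proposition~\ref{prop:lts1} plays the role of Theorem~\ref{thm:lts1}. Two remarks on your write-up: (i) you do not actually need to reach into the \emph{proof} of Proposition~\ref{prop:lts1} for the parameter $\gamma$; applying the statement with $\lambda:=m_0+\alpha/2$ already gives $\max_{\overbar\varOmega}W-\min_{\overbar\varOmega}W<\lambda=m_0+\alpha/2<\kappa$ once $r$ is small enough, which is all you need; (ii) the paper's version has the advantage that it also yields uniform convergence on small balls (Theorem~\ref{thm:sp2}) and, via Theorem~\ref{thm:ac2}, on compact subsets of $\varOmega\cup\argmin(V|\partial\varOmega)$, which is what is actually used later---your argument gives only the value at the origin, though it can be upgraded similarly.
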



\begin{proof} We show that, for any $\delta>0$, there exits $r>0$ such that 
\[
\lim_{\varepsilon\to 0+}(v^\varepsilon-\delta)_+=0\quad\text{ uniformly on }\ B_r.
\]
Applying the above claim to the pair $(-v^\varepsilon,-g)$ in place of $(v^\varepsilon,g)$ yields  that, for any $\delta>0$, 
there exist $r>0$ such that 
\[
\lim_{\varepsilon\to 0+}(-v^\varepsilon-\delta)_+=0 \  \text{ uniformly on } \ B_r,
\]
and, hence, the desired conclusion. 
\smallskip
 
\noindent Now,  fix  $\delta>0$, observe that  $\{x\in\overbar\varOmega\mid g(x)< \delta\}$ is 
a neighborhood, relative to $\overbar\varOmega$, of \ $\argmin(V|\partial\varOmega)$,  choose $\lambda>\mu>m_0$ and $h\in C(\overbar\varOmega)$ so that
\[
G:=g-\delta< 0 \  \text{ on } \ \{V\leq \lambda\}\cap\partial\varOmega,
\] 
and 
\[ 
h=0  \ \text{ on } \  \partial\varOmega, \ 
0\leq h\leq G_+  \ \text{ on } \ \overbar\varOmega, \ \text{ and } \ 
h=G_+ \ \text{ in } \ \{V\leq \mu\}. 
\]
\begin{center}
\includegraphics[height=4.6cm]{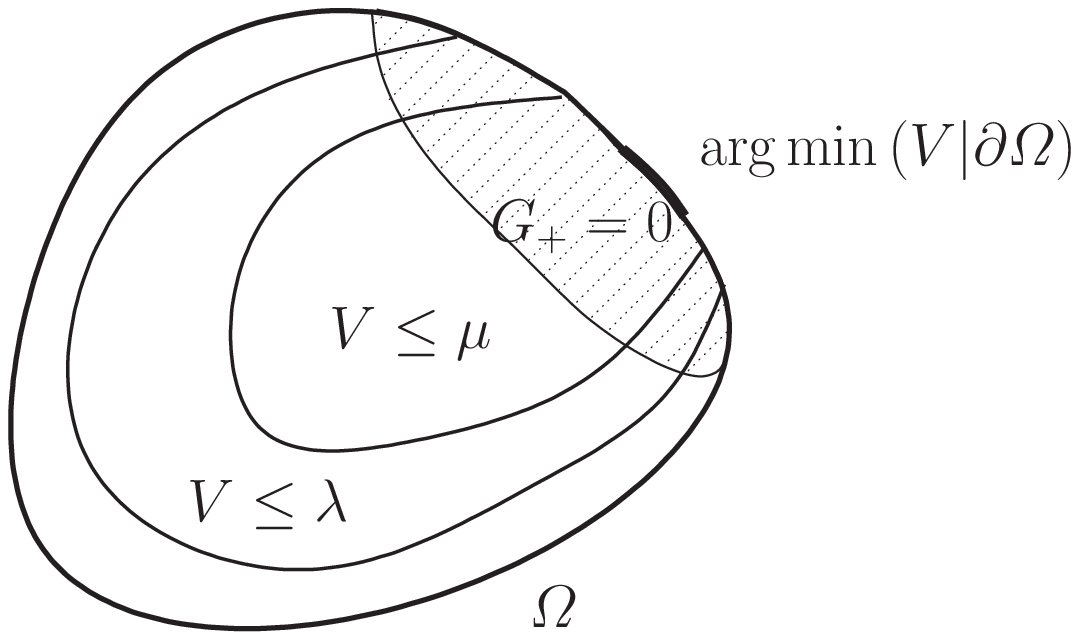}
\end{center}
Let $u^\varepsilon\in C(\overbar Q)$ be the solution of
\[
\begin{cases}
u_t^\varepsilon=L_\varepsilon u^\varepsilon&\text{ in }\ Q,\\[3pt]
u^\varepsilon=h &\text{ on  } \ \partial_{\mathrm{p}}Q. 
\end{cases}
\]
Theorem \ref{thm:lts1} gives  
\[
\lim_{\varepsilon\to 0+}u^\varepsilon=0\quad\text{ uniformly on } \ 
\overbar\varOmega\times[\e^{\mu/\varepsilon},\,\infty).
\]
For  $(x,t)\in\overbar Q$ set
\[
w^\varepsilon(x,t):=v^\varepsilon(x)-\delta-u^\varepsilon(x,t)
\]
and note that 
 \[\{V\leq \mu\}\ \subset \ \{ G-h\leq 0\} \ \ \ \text{ and } \ \  \ w^\varepsilon=G-h \ \ \text{ on }\ 
 \partial_{\mathrm{p}} Q.
 \]
It follows from  Theorem \ref{thm:sts1} that there exists  $r>0$ such that
 \[
 \lim_{\varepsilon\to 0+}(w^\varepsilon-\delta)_+=0\quad\text{ uniformly on } \ B_r\times[0,\,\e^{\mu/\varepsilon}]. 
 \]
 Since, for all $x\in\varOmega$, 
 \[
 (v^\varepsilon(x)-2\delta)_+\leq u^\varepsilon(x,\e^{\mu/\varepsilon})_++(w^\varepsilon(x,\e^{\mu/\varepsilon})-\delta)_+,  
 \]
 we find  that  
 \[
 \lim_{\varepsilon\to 0+}(v^\varepsilon-2\delta)_+=0 \ \text{ uniformly on } \ B_r. \qedhere
 \]
\end{proof}

\noindent We have indeed shown the following: 

\begin{thm}\label{thm:sp2} Assume (A1)--(A5) and \eqref{g}.  
For any $\delta>0$ there exists $r>0$
such that 
\[\lim_{\varepsilon\to 0}(v^\varepsilon-\delta)_+=0 
\ \ \ \text{ uniformly on }\ B_r. \]
\end{thm}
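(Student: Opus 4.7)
The plan is essentially immediate once one reads the proof of Theorem \ref{thm:sp1} carefully: its reduction step converts the two-sided statement into exactly the one-sided local bound claimed here, and the subsequent argument proves this one-sided bound without any further appeal to Theorem \ref{thm:sp1}. So my ``proof'' of Theorem \ref{thm:sp2} amounts to extracting that intermediate argument almost verbatim.

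Concretely, given $\delta>0$, I would first use the hypothesis \eqref{g} together with continuity of $g$ to choose parameters $\lambda>\mu>m_0$ and a continuous cutoff $h\in C(\overbar\varOmega)$ such that $h=0$ on $\partial\varOmega$, $0\leq h\leq (g-\delta)_+$ on $\overbar\varOmega$, and $h=(g-\delta)_+$ on the sublevel set $\{V\leq \mu\}$; this is possible because $\{g<\delta\}$ is a relative neighborhood of $\argmin(V|\partial\varOmega)$ in $\overbar\varOmega$ and $\mu>m_0$ can be taken close enough to $m_0$.

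Next I would let $u^\varepsilon\in C(\overbar Q)$ be the parabolic solution with initial--boundary data $h$. Since $h=0$ on $\partial\varOmega$ and is bounded, Theorem \ref{thm:lts1} applies (with $\mu$ in place of $\lambda$) and gives $u^\varepsilon\to 0$ uniformly on $\overbar\varOmega\times[\e^{\mu/\varepsilon},\infty)$. Setting $w^\varepsilon(x,t):=v^\varepsilon(x)-\delta-u^\varepsilon(x,t)$, the function $w^\varepsilon$ satisfies $w^\varepsilon_t=L_\varepsilon w^\varepsilon$ in $Q$ with parabolic-boundary data $(g-\delta)-h$, which by construction is $\leq 0$ throughout $\{V\leq \mu\}$. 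Theorem \ref{thm:sts1}, applied to $w^\varepsilon$ with $\mu$ playing the role of $\lambda$, then furnishes some $r>0$ such that $(w^\varepsilon-\delta)_+\to 0$ uniformly on $B_r\times[0,\e^{\mu/\varepsilon}]$.

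To conclude, I would evaluate at the matching time $t=\e^{\mu/\varepsilon}$ and use the trivial pointwise bound
\[
(v^\varepsilon(x)-2\delta)_+\leq (u^\varepsilon(x,\e^{\mu/\varepsilon}))_+\,+\,(w^\varepsilon(x,\e^{\mu/\varepsilon})-\delta)_+,
\]
so that both terms on the right tend to $0$ uniformly on $B_r$. Replacing $2\delta$ by $\delta$ (which costs nothing since $\delta$ is arbitrary) gives Theorem \ref{thm:sp2}. The only genuinely delicate part is the joint choice of $\mu$ and $h$: $\mu$ must be large enough ($>m_0$) for Theorem \ref{thm:lts1} to drive $u^\varepsilon$ to zero on the time scale $[\e^{\mu/\varepsilon},\infty)$, yet simultaneously small enough that the set $\{V\leq\mu\}\cap\partial\varOmega$ still sits in the region where $g<\delta$, which is what makes $(g-\delta)-h\leq 0$ hold on $\{V\leq \mu\}$ and allows Theorem \ref{thm:sts1} to control $w^\varepsilon$ near the origin on the complementary time scale $[0,\e^{\mu/\varepsilon}]$.
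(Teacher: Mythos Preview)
Your proposal is correct and follows the paper's approach essentially verbatim: Theorem \ref{thm:sp2} is stated immediately after the proof of Theorem \ref{thm:sp1} with the remark ``We have indeed shown the following,'' and the argument you extract---choosing $\lambda>\mu>m_0$ and $h$, applying Theorem \ref{thm:lts1} to $u^\varepsilon$, then Theorem \ref{thm:sts1} to $w^\varepsilon=v^\varepsilon-\delta-u^\varepsilon$, and combining at time $\e^{\mu/\varepsilon}$---is exactly the body of that proof. Your closing comment on the joint choice of $\mu$ and $h$ accurately identifies the one point requiring care.
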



\section{Asymptotic constancy}\label{sec:ac}
\noindent In this section we state precisely the claim that the limit, as $\varepsilon\to 0$, of \eqref{eq:int1} 
is the transport equation \ $u_t=b\cdot Du$ and provide its proof where 
(A5) plays a critical  role.  
 
\begin{thm}\label{thm:ac1} Let $\tau(\varepsilon) >0$ be  
such that $\,\lim_{\varepsilon\to 0+}\tau(\varepsilon)=\infty\,$ 
and for each $\varepsilon>0$, let $u^\varepsilon\in C(\overbar Q)\cap C^{2,1}(Q)$ 
be a solution of \eqref{eq:int1}. 
Assume that, for some constant $r>0$, 
\begin{equation}\label{eq:ac1}
\lim_{\varepsilon\to 0+} u^\varepsilon=0 \quad\text{ uniformly on }\  B_r\times[0,\,\tau(\varepsilon)),
\end{equation}
and 
\[
\sup_{\varepsilon>0}\|u^\varepsilon\|_{\infty,\, \varOmega\times[0,\,\tau(\varepsilon))}<\infty. 
\]
There exists $T=T(r)>0$ such that, for any compact subset $K$ of  $\varOmega$ and any $\tau_0>0$, 
\[
\lim_{\varepsilon\to 0+}u^\varepsilon=0\quad\text{ uniformly on }\ K\times[T,\,\tau(\varepsilon)-\tau_0). 
\]
\end{thm}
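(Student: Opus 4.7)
The plan is to argue by contradiction, translating the hypothetical ``bad'' point to a bounded time-window by the autonomy of $L_\varepsilon$, and then running a Barles--Perthame half-relaxed limit to the limiting transport equation $u_s-b\cdot Du=0$.

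\emph{Setup.} Fix $R>0$ with $\overbar\varOmega\subset B_R$ and set $T:=T(r,R)$ as in Lemma~\ref{lem:qp1}, so that $X(\sigma;x)\in B_r$ for every $x\in\overbar\varOmega$ and every $\sigma\geq T$; since $R$ depends only on $\varOmega$, once $r$ is fixed so does $T$. Suppose the conclusion fails: for some compact $K\subset\varOmega$, $\tau_0>0$, and $\delta>0$ there exist sequences $\varepsilon_n\to 0^+$, $x_n\in K$, $t_n\in[T,\tau(\varepsilon_n)-\tau_0)$ with $|u^{\varepsilon_n}(x_n,t_n)|\geq\delta$; passing to a subsequence we may assume $x_n\to x_*\in K$.

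\emph{Time shift and half-relaxed limits.} Since $L_\varepsilon$ is autonomous, the functions
\[
v^n(x,s):=u^{\varepsilon_n}(x,t_n-T+s), \qquad (x,s)\in\overbar\varOmega\times[0,T+\tau_0],
\]
are well-defined (as $t_n-T\geq 0$ and $t_n+\tau_0<\tau(\varepsilon_n)$), solve $v^n_s=L_{\varepsilon_n}v^n$ in $\varOmega\times(0,T+\tau_0)$, and are uniformly bounded. By construction $v^n(x_n,T)=u^{\varepsilon_n}(x_n,t_n)$ satisfies $|v^n(x_n,T)|\geq\delta$, while the hypothesis yields $v^n\to 0$ uniformly on $B_r\times[0,T+\tau_0]$. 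On the \emph{fixed} domain $\overbar\varOmega\times[0,T+\tau_0]$ introduce the half-relaxed limits $\bar v$ (USC) and $\underline v$ (LSC) of $(v^n)$; these are bounded, and Barles--Perthame stability applied to $v^n_s-\varepsilon_n\tr[aD^2v^n]-b\cdot Dv^n=0$ shows that $\bar v$ is a viscosity sub-solution and $\underline v$ a viscosity super-solution of
\[
u_s-b(x)\cdot Du=0 \quad\text{in }\varOmega\times(0,T+\tau_0),
\]
with $\bar v=\underline v=0$ on $B_r\times[0,T+\tau_0]$ from the uniform convergence.

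\emph{Characteristic propagation and contradiction.} The curve $\gamma(\sigma):=X(T-\sigma;x_*)$, $\sigma\in[0,T]$, satisfies $\dot\gamma=-b(\gamma)$, lies in $\overbar\varOmega$ by (A5), and joins $\gamma(0)=X(T;x_*)\in B_r$ to $\gamma(T)=x_*\in\varOmega$. A classical property of USC sub-solutions (resp.\ LSC super-solutions) of a linear transport equation is that they are non-increasing (resp.\ non-decreasing) along such characteristics, provable via sup-/inf-convolution regularization which produces semi-concave/semi-convex sub-/super-solutions of the same equation to which the chain rule applies almost everywhere. Applying this to $\bar v,\underline v$ on each sub-interval $[\sigma_0,T]\subset(0,T]$ and then letting $\sigma_0\to 0^+$, using USC/LSC of $\bar v,\underline v$ at $(X(T;x_*),0)$ where both vanish, yields $\bar v(x_*,T)\leq 0$ and $\underline v(x_*,T)\geq 0$. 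Combined with $\limsup_n u^{\varepsilon_n}(x_n,t_n)\leq\bar v(x_*,T)$ and $\liminf_n u^{\varepsilon_n}(x_n,t_n)\geq\underline v(x_*,T)$, this forces $u^{\varepsilon_n}(x_n,t_n)\to 0$, the desired contradiction.

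The essential---and potentially delicate---step is the characteristic-monotonicity property for the USC/LSC half-relaxed limits up to the initial time $s=0$; the time-shift device is precisely what converts the moving window $[T,\tau(\varepsilon)-\tau_0)$ into a fixed compact time interval on which this standard half-relaxed-limit machinery applies.
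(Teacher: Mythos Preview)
Your proof is correct and follows essentially the same route as the paper: pass to the half-relaxed limits, which are sub-/super-solutions of the transport equation $u_s=b\cdot Du$, and then use monotonicity along the characteristics $s\mapsto X(s;x)$ together with the time $T=T(r)$ from Lemma~\ref{lem:qp1}. The only notable differences are cosmetic. First, to reduce the moving window $[T,\tau(\varepsilon)-\tau_0)$ to a fixed time interval, you argue by contradiction and time-shift each bad sequence element by $t_n-T$, whereas the paper takes the pointwise supremum $v^\varepsilon(x,t):=\sup_{0\leq s<\tau(\varepsilon)-T-\tau_0}u^\varepsilon(x,t+s)$, which is again a sub-solution on the fixed slab; both devices are standard and interchangeable. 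Second, the paper proves the characteristic monotonicity directly (Lemma~\ref{lem:ac1}) by a doubling-of-variables argument rather than by sup/inf-convolution; your regularization sketch is plausible but would require some care with the variable-coefficient error terms, so the paper's direct proof is the cleaner justification of that step.
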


\noindent  As before  we prove a slightly generalized,  
one-sided version of the above theorem, which readily yields the claim.

\begin{thm}\label{thm:ac2} Let $\tau(\varepsilon)>0$ 
be  such that \ $\lim_{\varepsilon\to 0+}\tau(\varepsilon)=\infty$, and for each $\varepsilon>0$, 
consider a solution  $u^\varepsilon\in C(\overbar Q)\cap C^{2,1}(Q)$ to \eqref{eq:int1}. Fix $r>0$ so that $B_r\subset \varOmega$ and let $N$ be a (possibly empty) 
open subset of $\partial\varOmega$. 
Assume that 
\begin{equation}\label{eq:ac2}
\lim_{\varepsilon\to 0+} u^\varepsilon_+ =0 \quad\text{ uniformly on } \  \left(B_r \ \cup \ N \right)\times[0,\,\tau(\varepsilon)),
\end{equation}
and  
\begin{equation}\label{eq:ac3}
\limsup_{\varepsilon\to 0+} \|u^\varepsilon\|_{\infty,\, \varOmega\times[0,\,\tau(\varepsilon))}<\infty
\end{equation}
There exists $T=T(r)>0$ such that, for any compact subset $K$  of  $\varOmega\,\cup\, N$ 
and any $\tau_0>0$, 
\[
\lim_{\varepsilon\to 0+}u^\varepsilon_+ =0 \quad\text{ uniformly on } \ K\times[T,\,\tau(\varepsilon)-\tau_0). 
\]
\end{thm}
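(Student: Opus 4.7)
The plan is to establish the $+$-side conclusion by constructing, for each $\delta>0$ and all sufficiently small $\varepsilon$, an upper super-solution $W^\varepsilon$ of \eqref{eq:int1} on a sub-domain of $\varOmega\times(0,\tau(\varepsilon)-\tau_0)$ that dominates $u^\varepsilon$ on its parabolic boundary and is at most $2\delta$ on $K\times\{t\}$ for every $t\in[T,\tau(\varepsilon)-\tau_0)$, with $T=T(r)$ to be chosen. The super-solution combines two ingredients reflecting the two channels through which information can reach $K$: a transport-type barrier $\Phi^\varepsilon$ built from a Lyapunov-like function for the $b$-flow, which carries the smallness information along characteristics from $B_r$ into the interior, and an exponential boundary-layer barrier $\Psi^\varepsilon$ at scale $\sqrt{\varepsilon}$ that handles the absence of smallness information for $u^\varepsilon$ on $\partial\varOmega\setminus N$.

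\textbf{Geometric setup.} Fix $M>0$ with $u^\varepsilon_+\leq M$ on $\varOmega\times[0,\tau(\varepsilon))$ for small $\varepsilon$, via \eqref{eq:ac3}. Since $\varOmega\cup N$ is open in $\overbar\varOmega$ and $K$ is compact in it, $d_0:=\dist(K,\partial\varOmega\setminus N)>0$ (if $\partial\varOmega\setminus N=\emptyset$ only $\Phi^\varepsilon$ is needed). Choose $R>0$ with $\overbar\varOmega\subset B_R$; Lemma \ref{lem:qp1} supplies $T^*=T(r/4,R)$ such that $X(s;y)\in B_{r/4}$ for all $y\in\overbar\varOmega$ and $s\geq T^*$, and the desired $T$ will be a suitable multiple of $T^*$. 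By (A5) and continuous dependence on initial data, the trajectory tube $\Gamma:=\{X(s;y):y\in K,\,0\leq s\leq T\}$ is a compact subset of $\varOmega\cup N$, so $d_1:=\dist(\Gamma,\partial\varOmega\setminus N)>0$.

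\textbf{Construction of the barriers.} Following Proposition \ref{prop:qp1} and the mollification step of Theorem \ref{thm:qp2}, fix $\chi\in C^\infty(\R^n)$ with $0\leq\chi\leq 1$, $\chi=1$ on $\overbar\varOmega\setminus B_{r/2}$ and $\chi=0$ on $B_{r/4}$; set $\psi(x):=\int_0^\infty\chi(X(s;x))\,\di s$, which by Lemma \ref{lem:qp1} is Lipschitz and bounded on $\overbar\varOmega$ with $b\cdot D\psi=-\chi$ a.e. A standard convolution on a $C^1$-extension gives $\widetilde\psi\in C^2(\overbar\varOmega)$ with $0\leq\widetilde\psi\leq C_0$, $b\cdot D\widetilde\psi\leq -\tfrac12$ on $\overbar\varOmega\setminus B_{r/2}$, and uniform control on $D^2\widetilde\psi$. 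Let $d\in C^2(\overbar\varOmega)$ be a smoothed distance to $\partial\varOmega\setminus N$, normalized so that $Dd=-\nu$ on $\partial\varOmega\setminus N$, and define
\[
\Phi^\varepsilon(x,t):=\delta+(M+\delta)\exp\bigl(\widetilde\psi(x)-t\bigr),\qquad \Psi^\varepsilon(x):=M\exp\bigl(-\mu\,d(x)/\sqrt{\varepsilon}\bigr),
\]
for a small $\mu>0$ to be fixed. A direct calculation gives
\[
\partial_t\Phi^\varepsilon-L_\varepsilon\Phi^\varepsilon=(\Phi^\varepsilon-\delta)\bigl(-1-b\cdot D\widetilde\psi-\varepsilon\tr[aD^2\widetilde\psi]-\varepsilon\,aD\widetilde\psi\cdot D\widetilde\psi\bigr),
\]
which is $\geq 0$ on $(\overbar\varOmega\setminus B_{r/2})\times(0,\infty)$ for $\varepsilon$ small by $-b\cdot D\widetilde\psi\geq\tfrac12$; and, using (A5) to ensure $b\cdot Dd\geq c>0$ on a neighborhood $U$ of $\partial\varOmega\setminus N$, a standard boundary-layer computation shows $L_\varepsilon\Psi^\varepsilon\leq 0$ on $U$ for $\mu$ sufficiently small. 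The error $L_\varepsilon\Psi^\varepsilon$ on $\overbar\varOmega\setminus U$, where $\Psi^\varepsilon\leq Me^{-c'/\sqrt{\varepsilon}}$, is absorbed by adding a term $\sqrt{\varepsilon}\,t$ to $\Phi^\varepsilon$.

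\textbf{Comparison and main obstacle.} On the parabolic boundary of $(\varOmega\setminus\overbar B_{r/2})\times(0,\tau(\varepsilon)-\tau_0)$ the sum $W^\varepsilon:=\Phi^\varepsilon+\Psi^\varepsilon$ dominates $u^\varepsilon$: at $t=0$, $\Phi^\varepsilon\geq M+\delta$; on $\partial B_{r/2}\cup N$, $u^\varepsilon_+\leq\delta\leq\Phi^\varepsilon$ eventually by \eqref{eq:ac2}; on $\partial\varOmega\setminus N$, $\Psi^\varepsilon=M\geq u^\varepsilon$. Parabolic comparison gives $u^\varepsilon\leq W^\varepsilon$; choosing $T$ so large that $(M+\delta)e^{C_0-T}\leq\delta$, and noting that $\Psi^\varepsilon(x)\leq Me^{-\mu d_1/\sqrt{\varepsilon}}\to 0$ uniformly on $\Gamma$, yields the claim. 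The main obstacle is precisely the absence of any smallness hypothesis on $\partial\varOmega\setminus N$: without a boundary-layer barrier, comparison would transport the bulk value $M$ into the interior. Assumption (A5) is the essential input here, since it is exactly what makes the $\sqrt{\varepsilon}$-scale barrier $\Psi^\varepsilon$ a super-solution near $\partial\varOmega\setminus N$.
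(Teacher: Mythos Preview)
Your global super-solution strategy is different from the paper's proof, which passes to the upper relaxed limit $U$, shows it is a sub-solution of $U_t=b\cdot DU$, uses monotonicity along the $b$-characteristics in the interior and a \emph{local} cone barrier on a small space--time ball near each point of $N$, and only afterwards recovers uniformity in $t$ via the shift $v^\varepsilon(x,t):=\sup_{s}u^\varepsilon(x,t+s)$ on a bounded time window. Your route would be more elementary if it worked, but as written it has a genuine gap.

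The decisive gap is the handling of the error $L_\varepsilon\Psi^\varepsilon$ away from $U$. You absorb it by adding $\sqrt{\varepsilon}\,t$, but the conclusion is required for all $t\in[T,\tau(\varepsilon)-\tau_0)$, and the theorem must cover, for instance, $\tau(\varepsilon)=\e^{\lambda/\varepsilon}$ (this is exactly the case in every use of the result later in the paper); then $\sqrt{\varepsilon}\,t$ reaches size $\sqrt{\varepsilon}\,\e^{\lambda/\varepsilon}\to\infty$ and the bound $u^\varepsilon\leq W^\varepsilon$ becomes vacuous. Even the sharp estimate $|L_\varepsilon\Psi^\varepsilon|\leq C\varepsilon^{-1/2}\e^{-c/\sqrt\varepsilon}$ on $\overbar\varOmega\setminus U$ yields a correction that diverges when multiplied by $\e^{\lambda/\varepsilon}$, and the $\Phi^\varepsilon$-surplus cannot help for large $t$ since it decays exponentially in $t$ while the $\Psi^\varepsilon$-deficit is constant in $t$. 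Two repairs are available: (a) run your comparison only on a bounded window $[s,s+2T]$ and slide $s$ over $[0,\tau(\varepsilon)-2T)$ using the autonomy of the equation (this is essentially the paper's final step), or (b) replace the Euclidean distance by the backward exit time $d_0(x):=\inf\{t>0:X(-t;x)\notin\varOmega\}$, which by (A5) satisfies $b\cdot Dd_0\equiv1$ on $\varOmega\setminus\{0\}$; with this choice $L_\varepsilon\Psi^\varepsilon\leq0$ everywhere on $\varOmega\setminus B_{r/2}$ for small $\varepsilon$, and no time-growing correction is needed.

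There is also an arithmetic slip in the $\Phi^\varepsilon$ computation: from $b\cdot D\widetilde\psi\leq-\tfrac12$ you only obtain $-1-b\cdot D\widetilde\psi\geq-\tfrac12<0$, so $\Phi^\varepsilon$ is not a super-solution as claimed. Take instead $\Phi^\varepsilon=\delta+(M+\delta)\exp(\widetilde\psi-t/4)$, so that the bracket reads $-\tfrac14-b\cdot D\widetilde\psi-O(\varepsilon)\geq\tfrac14-O(\varepsilon)>0$.
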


\noindent The following lemma plays an important role in the proof of  Theorem 13. 

\begin{lem}\label{lem:ac1} Let $u\in\USC(\overbar Q)$ be a sub-solution of
$\,u_t =b\cdot Du\,$  in  $\,Q\,$ and, for 
$(x,t)\in Q$, set $X(s):=X(s;x)$.  
The function $s\mapsto u(X(s),t-s)$ is nondecreasing on $[0,\,t]$.  
\end{lem}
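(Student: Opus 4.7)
The plan is to make rigorous the formal chain-rule computation
\[
\tfrac{d}{ds}\,u(X(s;x),\,t-s) \;=\; D_xu\cdot b(X(s;x))\;-\;u_t \;\geq\; 0,
\]
which is nonnegative because $u$ is a viscosity sub-solution of $u_t-b\cdot D_xu\leq 0$. Setting $\tilde u(s):=u(X(s;x),\,t-s)$ on $[0,t]$, I will establish that $\tilde u$ is a USC viscosity sub-solution of $\tilde u'(s)\geq 0$ on $(0,t)$; by the standard one-dimensional monotonicity principle this forces $\tilde u$ to be nondecreasing on $[0,t]$, which is exactly the claim. (If $\tilde u$ failed to be nondecreasing on some $[s_1,s_2]$, then for small $\lambda>0$ the USC function $\tilde u(s)+\lambda(s-s_1)$ would attain an interior maximum on $[s_1,s_2]$, and the linear test function $\phi(s)=-\lambda(s-s_1)$ would immediately contradict the viscosity inequality there.)

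The main technical point is to verify the viscosity sub-solution property for $\tilde u$. Fix a smooth $\phi$ such that $\tilde u-\phi$ attains a strict local maximum at some $s_0\in(0,t)$. By (A5) and the flow-invariance of $\varOmega$, we have $X(s_0;x)\in\varOmega$, and the trajectory $\tau\mapsto X(t-\tau;x)$ is $C^1$ with $\tfrac{d}{d\tau}X(t-\tau;x)=-b(X(t-\tau;x))$. Introduce the smooth penalized test function
\[
\Phi_\alpha(y,\tau) \;:=\; \phi(t-\tau)\;+\;\frac{1}{2\alpha}\,|y-X(t-\tau;x)|^2,
\]
and, for each $\alpha>0$, pick a maximizer $(y_\alpha,\tau_\alpha)$ of $u-\Phi_\alpha$ on a small closed ball $\overbar B_r(X(s_0;x),\,t-s_0)\subset Q$. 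Comparing the value of $u-\Phi_\alpha$ at $(y_\alpha,\tau_\alpha)$ with its value at $(X(s_0;x),t-s_0)$, and invoking upper semicontinuity of $u$ together with the strict maximality of $\tilde u-\phi$ at $s_0$, yields the standard penalization estimates
\[
(y_\alpha,\tau_\alpha)\;\longrightarrow\;(X(s_0;x),\,t-s_0)\quad\text{and}\quad \tfrac{1}{\alpha}\,|y_\alpha-X(t-\tau_\alpha;x)|^2\;\longrightarrow\;0\quad\text{as}\ \alpha\to 0.
\]

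Applying the viscosity sub-solution inequality $\partial_\tau\Phi_\alpha-b(y_\alpha)\cdot D_y\Phi_\alpha\leq 0$ at $(y_\alpha,\tau_\alpha)$ and simplifying gives
\[
-\phi'(t-\tau_\alpha)\;+\;\tfrac{1}{\alpha}\,\bigl(y_\alpha-X(t-\tau_\alpha;x)\bigr)\cdot\bigl(b(X(t-\tau_\alpha;x))-b(y_\alpha)\bigr)\;\leq\;0.
\]
By (A1), the second term is dominated by $\tfrac{L}{\alpha}|y_\alpha-X(t-\tau_\alpha;x)|^2$ with $L$ a Lipschitz bound for $b$, and hence tends to $0$; passing to the limit $\alpha\to 0$ produces $-\phi'(s_0)\leq 0$, i.e., $\phi'(s_0)\geq 0$, which is the desired viscosity sub-solution property for $\tilde u$. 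The only real obstacle is securing the penalization estimate $\tfrac{1}{\alpha}|y_\alpha-X(t-\tau_\alpha;x)|^2\to 0$, but this is a by-now-standard consequence of the strict local maximality of $\tilde u-\phi$ at $s_0$.
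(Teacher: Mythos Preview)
Your proof is correct and follows essentially the same route as the paper's: both reduce to showing that $s\mapsto u(X(s;x),t-s)$ is a viscosity sub-solution of $v'\geq 0$ on $(0,t)$ via a penalization (variable-doubling) argument that forces the test-function derivative to be nonnegative by exploiting the Lipschitz bound on $b$. The only differences are cosmetic---you parametrize by the original time $\tau$ rather than $s=t-\tau$, use $\tfrac{1}{2\alpha}$ with $\alpha\to 0$ instead of $\alpha\to\infty$, and localize to a small ball rather than all of $\overbar\varOmega\times[0,t]$---and you additionally sketch why the one-dimensional viscosity inequality $v'\geq 0$ yields monotonicity, a step the paper simply asserts.
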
 

\begin{proof} Note that  (A5) yields that,  for all $x\in\varOmega,\, s\geq 0$,  $X(s;x)\in\varOmega$. For $s\in[0,\,t]$,  
set $v(s):=u(X(s),t-s)$.  We show that,  in the sub-solution sense, 
$v'\geq 0$ in $(0,\,t)$
which implies that $v$ is nondecreasing on 
$[0,\,t]$.
\smallskip

\noindent Let $\phi\in C^1([0,\,t])$ and assume that $v-\phi$ attains a strict maximum at a point 
$\hat s\in(0,\,t)$. For $\alpha>0$ consider the map
\[
(y,s)\mapsto u(y,t-s)-\phi(s)-\alpha|y-X(s)|^2 
\]
on $\overbar\varOmega\times[0,\,t]$ and let $(y_\alpha,s_\alpha)$ be a maximum point. 
It is easy to see that, as $\alpha \to \infty$, 
$(y_\alpha,s_\alpha) \to (\hat s, X(\hat s))$ and  
$\alpha|y_\alpha-X(s_\alpha))|^2 \to 0$. 
Fix a sufficiently large $\alpha$ so that $(y_\alpha,s_\alpha)\in Q$. 
Noting that $\psi(y,s):=u(y,t-s)$ is a sub-solution of 
$-\psi_s=b\cdot D\psi$ in $\varOmega\times(0,\,t)$, we find that
\[
-\phi'(s_\alpha)+2\alpha(y_\alpha-X(s_\alpha))\cdot \dot X(s_\alpha)\leq 2\alpha b(y_\alpha)\cdot (y_\alpha-X(s_\alpha)),
\]
from which we get 
\[
\phi'(s_\alpha)\geq 2\alpha(y_\alpha-X(s_\alpha))\cdot(b(X(s_\alpha))-b(y_\alpha))
\geq -2\alpha L|y_\alpha-X(s_\alpha)|^2,
\]
where $L>0$ is a Lipschitz bound of $b$. Sending $\alpha\to \infty$ yields 
\ $\phi'(\hat s)\geq 0$ and the proof is complete.
\end{proof} 

\noindent We continue with the 
\begin{proof}[Proof of Theorem \ref{thm:ac2}] 
We introduce the upper relaxed limit $U\in\USC(\overbar Q)$ given by
\[
U(x,t):=\lim_{\lambda\to 0+}\sup\{u^\varepsilon(y,s)_+:(y,s) \in \overbar Q,\ |y-x|+|s-t|\leq \lambda,\ 
0<\varepsilon<\lambda\},
\]
and recall the standard observation that  $U$ is a sub-solution of 
$\,U_t=b\cdot DU\,$  in $\, Q.$
\smallskip

\noindent According to Lemma \ref{lem:qp1}, we may choose $T=T(r)>0$ such that, for all $(x,t)\in\overbar\varOmega \times [T,\infty)$, 
$X(s;x)\in B_r\,$. 
From Lemma \ref{lem:ac1} it follows that, for any $(x,t)\in Q$ and $s\in[0,\,t]$,  
$\,U(X(s;x),t-s)\geq U(x,t)$. Hence, 
for any $(x,t)\in Q$ with  $t\geq T$, we have $\,X(T;x)\in B_r$ and  
\begin{equation}\label{102}
U(x,t)\leq U(X(T;x),t-T)\leq 0. 
\end{equation}

\noindent  Next we show that 
\begin{equation}\label{eq:ac4}
U=0 \ \text{ on } \ N\times (T, \infty).
\end{equation} 
Fix $(y,s)\in N\times(T,\,\infty)$ and, in view of (A5), choose $R>0$ so small that 
\[
y+\lambda b(y)\in\varOmega \ \text{ for all } \ \lambda\in(0,\,R), \ \ 
s>R+T, \ \ \text{ and } \ \ 
\overbar B_R(y)\cap \partial\varOmega\subset N.
\]
Reformulating the last observation in terms of $\,l(y,s):= 
\{(y,s)+\lambda(b(y),-1)\mid \lambda>0\}$,
a half-line in $\R^{n+1}$ with vertex at $(y,s)$,
we have 
\begin{equation}\label{eq:ac4-5}
\begin{cases}
\overbar B_R(y,s)\cap l(y,s)\subset \varOmega\times (T,\,\infty),\\[3pt]
\overbar B_R(y,s)\subset \R^n\times(T,\,\infty), \\[3pt]
\overbar B_R(y,s)\cap \left(\partial\varOmega\times\R \right)\subset N\times(T,\,\infty).
\end{cases}
\end{equation}
 For any $\gamma\in(0,1)$, we consider the open convex cone in $\R^{n+1}$ with vertex at the origin given by 
\[
C_\gamma:=\bigcup_{\lambda>0}\lambda\left((b(y),-1))+B_\gamma\right),
\] 
and we set $$C_\gamma(y,s):=(y,s)+C_\gamma.$$
From (A5) again, 
we may choose $\gamma\in (0,\,1/2)$ small enough so that
\begin{equation}\label{eq:ac5}
\overbar B_R(y,s)\cap C_{2\gamma}(y,s)\subset \varOmega\times (T,\,\infty), 
\end{equation}
which strengthens the first inclusion of \eqref{eq:ac4-5}.  
Noting that $C_\gamma$ is an open neighborhood of $(b(y),-1)$, we may also choose 
$\rho\in(0,\,R)$ so that 
\[
(b(x),-1)\subset C_\gamma\quad\text{ for all }\ x\in B_\rho(y), 
\]
which ensures that 
\begin{equation}\label{eq:ac6}
(b(x),-1)\subset C_\gamma\quad\text{ for all }\ (x,t)\in B_\rho(y,s). 
\end{equation}
\smallskip

\noindent Define next $d,\,\phi\mid\R^{n+1}\to \R$ by 
\[
d(x,t):=\dist((x,t),\,C_\gamma(y,s)) \ \  \ \text{ and } \ \ \ \phi:=d^2. 
\]
It is well-known that $\phi \in C^1(\R^{n+1})$, 
$D\phi \in \Lip(\R^{n+1})$ and 
$D\phi(x,t)$ is in the (negative) 
dual cone of $C_\gamma$, i.e.,
\[
D\phi(x,t)\cdot (\xi,\tau)\leq 0 \ \ \ \text{ for all }\ 
(\xi,\tau)\in C_\gamma,\ (x,t)\in \R^{n+1}.
\] 
Combining the above remark  with \eqref{eq:ac6} yields
\begin{equation}\label{eq:ac7}
b\cdot D\phi\leq \phi_t  \ \text{ in } \ B_\rho(y,s). 
\end{equation}

\noindent Next we compare $u^\varepsilon$ and $\phi$ on the set
\[
Q(y,s):=B_\rho(y,s)\cap Q,  
\]
and note that 
\[
\partial Q(y,s)\subset (\partial B_\rho(y,s)\cap Q)\cup (\overbar B_\rho(y,s)\cap\partial Q).
\]


\noindent In view of \eqref{eq:ac5}, we may choose $\lambda>0$ so that 
\[
(\overbar C_\gamma(y,s)+\overbar B_\lambda)\cap \partial B_\rho(y,s)\subset \varOmega\times (T,\,\infty).
\]

\noindent Set 
\[
K:=(\overbar C_\gamma(y,s)+\overbar B_\lambda)\cap \partial B_\rho(y,s),
\]
which is clearly a compact subset of $\varOmega\times(T,\,\infty)$, and 
fix 
any $\delta>0$. 
\smallskip

\noindent  Note that   \eqref{102}  and \eqref{eq:ac3} imply that 
there exist $\varepsilon_0>0$  and $M>0$ such that, for all $\varepsilon\in(0,\,\varepsilon_0),$ 
\ $\overbar Q(y,s)\subset \overbar\varOmega\times (T,\,\tau(\varepsilon))$  and  
\begin{equation}\label{eq:ac8}
u^\varepsilon\leq \delta  \ \text{ in  } \ K  \ \text{ and } \ u^\varepsilon\leq M \ \ \text{ in } \ \overbar Q(y,s). 
\end{equation}
\noindent Set $A:=M/\lambda^2$. Then, for  $\varepsilon\in(0,\,\varepsilon_0)$, 
\begin{equation}\label{eq:ac9}
u^\varepsilon\leq M=A\lambda^2\leq A\phi  \text{ in } \  \overbar Q(y,s) \cap \{d\geq\lambda\}. 
\end{equation}

\noindent Since $\overbar B_\rho(t,s)\cap \partial Q$ is a compact subset of $N\times (T,\,\infty)$, in view of 
 \eqref{eq:ac2}, we may assume, replacing, if needed, $\varepsilon_0$ by a smaller positive number, 
that, for all $\varepsilon\in (0,\,\varepsilon_0)$, 
\begin{equation}\label{eq:ac10}
u^\varepsilon\leq \delta  \  \text{ in } \  \overbar B_\rho(t,s)\cap \partial Q. 
\end{equation}

\noindent Fix $(x,t)\in \partial Q(y,s)$ and $\varepsilon\in(0,\,\varepsilon_0)$. 
If $(x,t)\in \partial B_\rho(y,s)\cap Q=\partial B_\rho(y,s)\cap (\varOmega\times(T,\,\infty))$ and  
$(x,t)\not\in K$, then $d(x,t)\geq \lambda$ and, by \eqref{eq:ac9}, $u^\varepsilon(x,t)\leq A\phi(x,t)$. 
Otherwise, that is, if $(x,t)\in \partial B_\rho(y,s)\cap Q \cap K$,   \eqref{eq:ac8} gives \ 
$u^\varepsilon(x,t)\leq \delta$. 
\vskip.07in
\noindent Moreover, if $(x,t)\in \overbar B_\rho(y,s)\cap \partial Q$, then, by \eqref{eq:ac10}, we have   
\ $u^\varepsilon(x,t)\leq \delta$, and, therefore, for all  $\varepsilon\in(0,\,\varepsilon_0)$, 
\[
u^\varepsilon\leq \delta+A\phi  \ \text{ on  } \ \partial Q(y,s).
\]
   
\noindent Since, for each $t$,  $D\phi(\cdot,t)\in\Lip(\R^{n})$, 
there exists some  $C>0$ so that,  in the super-solution sense,
$\tr[aD_x^2\phi]\leq C \ \text{ in } \ Q(y,s),$
Hence, using \eqref{eq:ac7}, we see that 
$\psi(x,t):=\delta+A\phi(x,t)+\varepsilon AC t\,$ is a 
super-solution to
\[
\psi_t\geq L_\varepsilon\psi   \ \text{ in }\ Q(y,s). 
\] 
Thus, by comparison, we get 
\[
u^\varepsilon\leq \psi  \  \text{ in } \ Q(y,s), 
\]
which yields  
\[
U(y,s)\leq \delta+A\phi(y,s)=\delta,
\]
and, after letting $\delta \to 0$,  $U(y,s)=0$. 
This proves \eqref{eq:ac4}.   
\smallskip

\noindent  Since we have shown that $U=0$ on  $(\varOmega\cup N)\times (T,\,\infty)$, it follows that, 
for any compact subset $K$ of $(\varOmega\cup N)\times (T,\,\infty)$, 
\[
\lim_{\varepsilon\to 0+}u^\varepsilon_+=0 \  \text{ uniformly on } \ K. 
\]

\noindent To complete the proof, let $T>0$ be as above, fix any $\tau_0>0$, 
choose $\varepsilon_0>0$ so that \ $T+\tau_0<\tau(\varepsilon)$ \ for all $\varepsilon\in(0,\,\varepsilon_0)$, 
set,  for $(x,t)\in\overbar\varOmega\times [0,\,T+\tau_0] \  \text{and} \ \varepsilon\in(0,\,\varepsilon_0)$, 
\[
v^\varepsilon(x,t):=\sup\{u^\varepsilon(x,t+s)\mid 0\leq s<\tau(\varepsilon)-T-\tau_0\} 
\]
and
\[
\begin{aligned}
U(x,t):=\lim_{\lambda\to 0+}\sup\{v^\varepsilon(y,s)_+\mid &\,(y,s)\in\overbar\varOmega\times[0,\,T_0+\tau_0],
\\ 
&|y-x|+|s-t|<\lambda,\, 0<\varepsilon<\lambda\},
\end{aligned}
\]
and note that, for any $0<\varepsilon<\varepsilon_0$, $v^\varepsilon$ is a sub-solution to
$v_t^\varepsilon=L_\varepsilon v^\varepsilon$ in $\varOmega\times(0,\,T+\tau_0)$. 

\noindent It follows, as above, that 
$\,U=0\,$  in  $\,\varOmega\times[T,\,T+\tau_0)\,$
and $\,U=0\,$ in  $\,N\times(T,\,T+\tau_0)$. 
\smallskip

\noindent Let $K$ be a compact subset of $\varOmega\cup N$. Then $K\times[T+\tau_0/3,\,T+2\tau_0/3]$ is 
a compact subset of $N\times(T,\,T+\tau_0)$ and, thus,
\[
\lim_{\varepsilon\to 0+}v^\varepsilon_+=0 \ \ \ \text{ uniformly on }\ K\times[T+\tau_0/3,\,T+2\tau_0/3], 
\]
which yields 
\[
\lim_{\varepsilon\to 0+}u^\varepsilon_+=0 \ \ \ \text{ uniformly on }\ K\times[T+\tau_0/3,\,\tau(\varepsilon)-\tau_0/3]. 
\]
The proof is now complete. 
\end{proof} 

\noindent We close the section with the following generalization of Theorem \ref{thm:sp1}. 

\begin{thm} \label{thm:ac3} Under the hypotheses of Theorem \ref{thm:sp1}, if 
$K$ is a compact subset of $\varOmega\cup\argmin(V|\partial\varOmega)$, then   
$\lim_{\varepsilon\to 0+}v^\varepsilon=0 \ \text{ uniformly on } \ K.$
\end{thm}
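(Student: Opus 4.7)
The plan is to feed the small-ball convergence from Theorem \ref{thm:sp2} into the asymptotic-constancy result Theorem \ref{thm:ac2}, by observing that the stationary $v^\varepsilon$ is trivially a time-independent solution of the parabolic equation \eqref{eq:int1}.

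Fix $\delta>0$ and set $u^\varepsilon(x,t):=v^\varepsilon(x)-\delta$. Then $u^\varepsilon\in C(\overbar Q)\cap C^{2,1}(Q)$ solves $u^\varepsilon_t=L_\varepsilon u^\varepsilon$ in $Q$, and the maximum principle for \eqref{eq:sp1} gives the uniform bound $\|u^\varepsilon\|_{\infty,Q}\leq \|g\|_{\infty,\partial\varOmega}+\delta$, which verifies \eqref{eq:ac3}. By Theorem \ref{thm:sp2} there exists $r>0$ (depending on $\delta$) such that $u^\varepsilon_+=(v^\varepsilon-\delta)_+\to 0$ uniformly on $B_r$, and, since $u^\varepsilon$ is independent of $t$, the same holds on $B_r\times[0,\infty)$.

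The key step is to enlarge the set of uniform convergence so as to capture $\argmin(V|\partial\varOmega)$. To that end, set
\[
N:=\{x\in\partial\varOmega: g(x)<\delta\},
\]
which is open in $\partial\varOmega$ by continuity of $g$, and which contains $\argmin(V|\partial\varOmega)$ because $g\equiv 0$ there. On $N$ we have $u^\varepsilon=g-\delta<0$, hence $u^\varepsilon_+\equiv 0$ on $N\times(0,\infty)$. Choosing any $\tau(\varepsilon)>0$ with $\tau(\varepsilon)\to\infty$, the hypotheses \eqref{eq:ac2} and \eqref{eq:ac3} of Theorem \ref{thm:ac2} are satisfied. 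That theorem provides $T=T(r)>0$ such that, for every compact $K'\subset\varOmega\cup N$ and every $\tau_0>0$,
\[
\lim_{\varepsilon\to 0+} u^\varepsilon_+=0\quad\text{uniformly on } K'\times[T,\tau(\varepsilon)-\tau_0),
\]
which, since $u^\varepsilon$ does not depend on $t$, reads $(v^\varepsilon-\delta)_+\to 0$ uniformly on $K'$. Every compact $K\subset\varOmega\cup\argmin(V|\partial\varOmega)$ qualifies as such a $K'$, because $\argmin(V|\partial\varOmega)\subset N$.

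Finally, applying the same argument to $-v^\varepsilon$, which solves \eqref{eq:sp1} with boundary datum $-g$ that still vanishes on $\argmin(V|\partial\varOmega)$, yields $(-v^\varepsilon-\delta)_+\to 0$ uniformly on $K$. Combining gives $\|v^\varepsilon\|_{\infty,K}\leq 2\delta$ for all sufficiently small $\varepsilon$, and letting $\delta\to 0+$ produces the claim. I do not foresee a serious obstacle: the entire argument amounts to verifying the hypotheses of Theorem \ref{thm:ac2} for the stationary solution, with the only mild subtlety being the use of a strict inequality in the definition of $N$ to ensure that $\argmin(V|\partial\varOmega)\subset N$.
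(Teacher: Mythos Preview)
Your proof is correct and follows exactly the route the paper takes: apply Theorem \ref{thm:ac2} to the time-independent solution $u^\varepsilon(x,t)=v^\varepsilon(x)-\delta$ with $N=\{g<\delta\}$, then repeat with $-v^\varepsilon$. You have simply written out the verifications (the bound \eqref{eq:ac3}, the small-ball input from Theorem \ref{thm:sp2}, and the inclusion $\argmin(V|\partial\varOmega)\subset N$) that the paper's two-line proof leaves implicit.
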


\begin{proof} Fix  a compact $K\subset \varOmega\cup\argmin(V|\partial\varOmega)$ and $\delta>0$.  
Theorem \ref{thm:ac2} applied to 
$u^\varepsilon(x,t):=v^\varepsilon(x)-\delta$ with $N=\{y\in\partial\varOmega : g(y)<\delta\}$, gives
$\lim_{\varepsilon\to 0+}(v^\varepsilon-\delta)_+=0 \ \text{ uniformly on }\ K$,
and, hence, 
$
\lim_{\varepsilon\to 0+}v^\varepsilon_+=0  \ \text{ uniformly on } \ K. 
$
Similarly,\\ 
$\lim_{\varepsilon\to 0+}v^\varepsilon_-=0  \ \text{ uniformly on } \ K.$ \qedhere 

\end{proof}


\section{The proof of the main theorem} \label{sec:pmt}

\noindent We are now ready to prove the main theorem. Note that, in view of the linearity of pde \eqref{eq:int1}, 
it is enough to show that the following holds.

\begin{thm} \label{thm:pmt1}
Let $u^\varepsilon\in C(\overbar Q)\cap C^{2,1}(Q)$ be the solution of 
\eqref{eq:int1}, \eqref{eq:boundary} and  fix $\delta>0$. 
\begin{enumerate}
\item[(i)] There exists $T=T(\delta,g)>0$ such that, for any 
$\lambda\in(0,\,m_0)$ and any compact subset $K$ of $\varOmega$, 
\begin{equation}\label{eq:pmt1}
\lim_{\varepsilon\to 0+}\left(u^\varepsilon-g(0)-\delta\right)_+=0 
\  \text{ uniformly on }\  K\times[T,\,\e^{\lambda/\varepsilon}].   
\end{equation}
\item[(ii)] Assume that $g=g(0)$ on $\argmin(V|\partial\varOmega)$. 
There exists $T=T(\delta,g)>0$ such that, for any compact subset  
$K$ of $ \varOmega\,\cup\,\argmin(V|\partial\varOmega)$,
\begin{equation}\label{eq:pmt2}
\lim_{\varepsilon\to 0+}\left(u^\varepsilon-g(0)-\delta\right)_{+}=0 
\ \text{ uniformly on } K\times[T,\,\infty). 
\end{equation}
\item[(iii)]Assume that $g=g_0$ on  $\argmin(V|\partial\varOmega)$ for some 
$g_0\in\R$. Then, for any $\lambda\in(m_0,\,\infty)$ and any compact  subset $K$  of $\varOmega\,\cup\,\argmin(V|\partial\varOmega)$, 
\begin{equation}\label{eq:pmt3}
\lim_{\varepsilon\to 0+}\left(u^\varepsilon-g_0-\delta\right)_+=0 
\ \text{ uniformly on } \ K\times[\e^{\lambda/\varepsilon},\,\infty). 
\end{equation}
\end{enumerate}
\end{thm}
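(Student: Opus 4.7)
I will prove the three parts in the order (iii), (i), (ii). Part (iii) is a direct corollary of the stationary theory of Section~\ref{sec:sp} combined with Theorem~\ref{thm:lts1}. Part (i), the core, will be obtained by building an exponential super-solution over $\overbar\varOmega \times [T,\,\e^{\lambda/\varepsilon}]$ modeled on the proof of Theorem~\ref{thm:sts1} but adapted so that the initial and boundary conditions are handled at a single deterministic time $T=T(\delta,g)$. Part (ii) will then follow by relaxing the constraints on the exponent $\mu$ in the (i)-construction, using the extra hypothesis on the boundary data, and dovetailing with (iii).

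\textbf{Part (iii).} Decompose $u^\varepsilon = v^\varepsilon + w^\varepsilon$, where $v^\varepsilon$ solves the stationary Dirichlet problem with boundary data $g$ and $w^\varepsilon := u^\varepsilon - v^\varepsilon$ solves \eqref{100} with uniformly bounded initial datum $g-v^\varepsilon$. Theorem~\ref{thm:ac3} applied to $g - g_0$ (which vanishes on $\argmin(V|\partial\varOmega)$ by hypothesis) gives $v^\varepsilon \to g_0$ uniformly on $K$; Theorem~\ref{thm:lts1} gives $w^\varepsilon \to 0$ uniformly on $\overbar\varOmega \times [\e^{\lambda/\varepsilon},\infty)$; summing delivers~\eqref{eq:pmt3}.

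\textbf{Part (i).} First I fix $T$: by continuity of $g$ at $0$ choose $r_1>0$ with $g\leq g(0)+\delta/2$ on $\overbar B_{r_1}$, and by Lemma~\ref{lem:qp1} take $T$ large enough that $X(T;x)\in B_{r_1}$ for every $x\in\overbar\varOmega$; this $T$ depends only on $\delta$ and $g$. For the given $\lambda\in(0,m_0)$ and compact $K\subset\varOmega$, set $V_K:=\max_K V$ and let $C$ be a Lipschitz constant of $V$. Pick $r>0$ so small that $\max(V_K,\lambda)+(C+2)r<m_0$, and choose $\mu\in\bigl(\max(V_K,\lambda)+(C+1)r,\, m_0-r\bigr)$; with $W_r\in C^2(\overbar\varOmega)$ from Theorem~\ref{thm:qp2}, set
\[
\phi^\varepsilon(x) := \exp\!\left(\frac{W_r(x) - \mu}{\varepsilon}\right),\qquad
v^\varepsilon(x,t) := g(0) + \delta + M\phi^\varepsilon(x) + R_\varepsilon\,(t-T),
\]
with $M$ larger than $\|g\|_{\infty,\overbar\varOmega}+|g(0)|+\delta$ and $R_\varepsilon$ of order $\varepsilon^{-1}\exp\bigl(((C+1)r-\mu)/\varepsilon\bigr)$. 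The identity $L_\varepsilon\phi^\varepsilon=(\phi^\varepsilon/\varepsilon)\bigl(H(x,DW_r)+\varepsilon\tr[aD^2W_r]\bigr)$ combined with Theorem~\ref{thm:qp2} makes $L_\varepsilon\phi^\varepsilon\leq 0$ outside $B_r$ for small $\varepsilon$ and bounded in $B_r$ by $R_\varepsilon/M$, so $v^\varepsilon$ is a super-solution; moreover $\mu>\lambda+(C+1)r$ forces $R_\varepsilon\,\e^{\lambda/\varepsilon}\to 0$ and $\mu>V_K+r$ forces $M\phi^\varepsilon|_K\to 0$. The domination $v^\varepsilon\geq u^\varepsilon$ on the parabolic boundary of $\overbar\varOmega\times[T,\e^{\lambda/\varepsilon}]$ is checked in three pieces: on $\partial\varOmega\times[T,\e^{\lambda/\varepsilon}]$ and on $\{V\geq\mu+r\}$ at $t=T$, $W_r\geq V-r\geq\mu$ makes $\phi^\varepsilon\geq 1$ (and blows up on $\partial\varOmega$), so $M\phi^\varepsilon$ dominates any value of $u^\varepsilon$; on the compact set $\{V<\mu+r\}\subset\varOmega$ at $t=T$, I invoke the finite-time singular limit $\limsup_{\varepsilon\to 0+}u^\varepsilon(\cdot,T)\leq g(X(T;\cdot))\leq g(0)+\delta/2$, obtained from Lemma~\ref{lem:ac1} applied to the upper relaxed limit of $u^\varepsilon-g(0)-\delta$ (a sub-solution of $U_t=b\cdot DU$ by standard stability) together with upper-semicontinuity and compactness. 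Comparison then gives $u^\varepsilon\leq v^\varepsilon$ on the slab, and on $K\times[T,\e^{\lambda/\varepsilon}]$ both correction terms vanish, yielding \eqref{eq:pmt1}.

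\textbf{Part (ii) and main obstacle.} Under $g=g(0)$ on $\argmin(V|\partial\varOmega)$, continuity of $g$ gives a neighborhood $N\subset\partial\varOmega$ of $\argmin(V|\partial\varOmega)$ with $g\leq g(0)+\delta/2$ on $N$, while compactness of $\partial\varOmega\setminus N$ and $V>m_0$ off $\argmin$ produce $\eta=\eta(\delta,g)>0$ with $V\geq m_0+\eta$ on $\partial\varOmega\setminus N$. The part (i) construction then runs with $\mu$ enlarged up to $m_0+\eta-r$: on $N$, $v^\varepsilon\geq g(0)+\delta\geq g$ trivially; on $\partial\varOmega\setminus N$, $\phi^\varepsilon$ still blows up. Hence (i) extends to any $\lambda\in(0,m_0+\eta/2)$; combined with (iii) (which gives convergence on $K\times[\e^{\lambda'/\varepsilon},\infty)$ for any $\lambda'>m_0$), choosing $m_0<\lambda'<\lambda<m_0+\eta/2$ the two intervals cover $[T,\infty)$, producing \eqref{eq:pmt2}. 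The subtlest step throughout is the finite-time asymptotic at $t=T$ in part (i) on $\{V<\mu+r\}$: this is the one place I genuinely need the singular-limit passage from $L_\varepsilon$ to $b\cdot D$ on a compact subset of $\varOmega$, and my plan is to get it from the half-relaxed-limits machinery together with Lemma~\ref{lem:ac1} rather than by a new argument.
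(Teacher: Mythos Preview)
Your part (iii) is correct and is exactly the paper's argument (stationary solution plus Theorem~\ref{thm:lts1}, with Theorem~\ref{thm:ac3} packaging the stationary estimate on compacta).

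There is, however, a genuine gap in your part (i). Your barrier
\[
v^\varepsilon(x,t)=g(0)+\delta+M\exp\!\Big(\frac{W_r(x)-\mu}{\varepsilon}\Big)+R_\varepsilon(t-T)
\]
only yields $(u^\varepsilon-g(0)-\delta)_+\to 0$ on the sublevel set $\{W_r<\mu\}\approx\{V<\mu\}$, because the term $M\phi^\varepsilon$ tends to $0$ there and blows up elsewhere. At the same time you are forced to take $\mu<m_0-r$ so that $\phi^\varepsilon\to\infty$ on $\partial\varOmega$. Hence your constraint ``$\max(V_K,\lambda)+(C+2)r<m_0$'' is not a harmless smallness condition on $r$: it requires $\max_K V<m_0$, which is \emph{false} for general compact $K\subset\varOmega$ (nothing prevents $V$ from exceeding $m_0$ in the interior; think of a dumbbell domain where the cheapest exit point from $0$ is on the near bulb while points in the far bulb have large $V$). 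So the barrier alone does not give \eqref{eq:pmt1} on arbitrary $K$. The paper closes exactly this gap with Theorem~\ref{thm:ac2}: one first obtains the estimate on a small ball $B_{r'}$ (where $V$ is small) and then propagates it to any compact $K\subset\varOmega$ via the transport limit. Your construction actually produces a sharper ``small-ball, long-time'' estimate than Theorem~\ref{thm:sts1} (you reach $\e^{\lambda/\varepsilon}$ directly, without the paper's bootstrap through an auxiliary $G$), but you still need one call to Theorem~\ref{thm:ac2} afterwards to pass from $B_{r'}$ to $K$.

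Part (ii) inherits this gap and adds a second one. When you push $\mu$ above $m_0$, the set $\{V<\mu+r\}$ on which you invoke the ``finite-time singular limit'' at $t=T$ is no longer compactly contained in $\varOmega$; it touches $\partial\varOmega\cap N$. Lemma~\ref{lem:ac1} and the half-relaxed-limit argument give $\limsup u^\varepsilon(\cdot,T)\le g(0)+\delta/2$ only on compact subsets of $\varOmega$, not uniformly up to the boundary, so you cannot conclude $u^\varepsilon(\cdot,T)\le v^\varepsilon(\cdot,T)$ on all of $\overbar\varOmega$. This boundary layer is precisely what the cone-barrier step in Theorem~\ref{thm:ac2} (the argument around \eqref{eq:ac4}--\eqref{eq:ac7}) is designed for; the paper's proof of (ii) invokes Theorem~\ref{thm:ac2} with the open boundary set $N$ for this reason. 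Once you allow yourself Theorem~\ref{thm:ac2} (with $N$) in place of the raw Lemma~\ref{lem:ac1}, your overall strategy for (ii) --- extend (i) slightly past $m_0$ and dovetail with (iii) --- matches the paper's and goes through.
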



\begin{proof}[Proof of Theorem \ref{thm:pmt1}] We begin with  (i). 
Fix any $\lambda\in(0,\,m_0)$ and $\delta>0$, 
recall that $V(0)=0$ and $V>0$ in $\overbar \varOmega\setminus\{0\}$ 
and 
choose a $\gamma=\gamma(\delta,g)>0$ so that
\[
\{V\leq\gamma\}\subset\{g-g(0)-\delta\leq 0\}.  
\]
Theorem \ref{thm:sts1} yields some $r=r(\gamma)>0$ such that  
\[
\lim_{\varepsilon\to 0+}(u^\varepsilon-g(0)-\delta)_+=0 
\ \ \ \text{ uniformly on } \ B_r\times [0,\,\e^{\gamma/\varepsilon}]. 
\]
Next we use Theorem \ref{thm:ac2} to select a constant $T=T(r)>0$ such that,
for any compact subset $K$ of $\varOmega$ and $\tau_0>0$, 
\[
\lim_{\varepsilon\to 0+}(u^\varepsilon-g(0)-\delta)_+=0 \ \ \ \text{ uniformly on } \ 
K\times[T,\e^{\gamma/\varepsilon}-\tau_0].
\]
Fix  $\mu\in (\lambda,\,m_0)$. The above convergence, for \ $K=\{V \leq \mu\}$,
ensures that there exists $\varepsilon_0>0$ such that, for all $\varepsilon\in(0,\,\varepsilon_0)$,
\[
u^\varepsilon(\cdot ,T)-g(0)-2\delta\leq 0  \ \text{ on } \  \{V \leq \mu\},
\]
that is,   for all $\varepsilon\in(0,\,\varepsilon_0)$,
\[
\{V\leq \mu\} \subset \{u^\varepsilon(\cdot\,,T)-g(0)-2\delta\leq 0\}. 
\]
\begin{center}
\includegraphics[height=4cm]{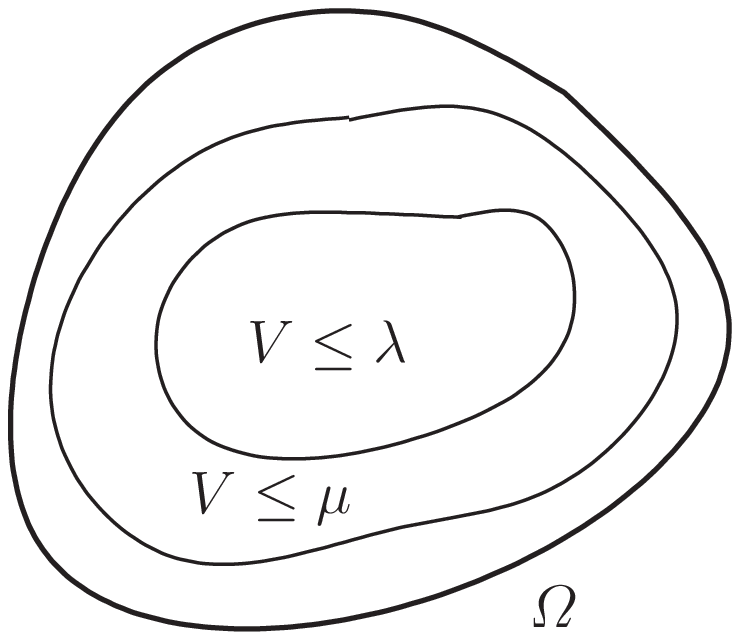}
\end{center}
Noting that $\{V\leq \mu\}$ is a neighborhood of $\{V\leq\lambda\}$, we may select $G\in C(\overbar\varOmega)$ 
so that 
\[
G=0 \  \text{ in  } \ \{V\leq\lambda\},  \ 
G\geq 0 \ \text{ on } \ \overbar\varOmega, \ \text{and } \ 
\max_{\overbar\varOmega} g-g(0)-2\delta\leq G  \text{ in  } \ \{V>\mu\}.
\]
Observe that, for all  $\varepsilon\in(0,\varepsilon_0)$,
\[
u^\varepsilon(\cdot,T)-g(0)-2\delta\leq G \ \text{ on } \ \overbar \varOmega
\ \ \ \text{and} \ \ \ 
\{V\leq\lambda\}\subset \{G\leq 0\}. 
\]
Let $U^\varepsilon\in C(\overbar Q)\cap C^{2,1}(Q)$ be the solution of \eqref{eq:int1} with 
initial-boundary  condition $U^\varepsilon=G \ \text{ on} \ \partial_{\mathrm{p}} Q$. The maximum principle implies that,  for all $(x,t)\in\overbar Q$, \\
\ $u^\varepsilon(x,T+t)-g(0)-2\delta\leq U^\varepsilon(x,t)$, while, 
in view of Theorem \ref{thm:sts1} and Theorem \ref{thm:ac2}, there exist
$r_1=r_1(\delta, G)>0$ and $T_1=T_1(r_1)>0$  respectively such that, for any compact $K\subset\varOmega$, 
\[
\lim_{\varepsilon\to 0+}(U^\varepsilon-\delta)_+=0  \ \text{ uniformly on }\ B_{r_1}\times[0,\,\e^{\lambda/\varepsilon}],
\] 
and 
\[
\lim_{\varepsilon\to 0+}(U^\varepsilon-\delta)_+=0 \ \ \ \text{ uniformly on }\ K\times[T_1,\,\e^{\lambda/\varepsilon}-T],
\]
and, hence,
\[
\lim_{\varepsilon\to 0+}(u^\varepsilon-g(0)-3\delta)_+=0 \ \ \ \text{ uniformly on } \ K\times[T+T_1,\,\e^{\lambda/\varepsilon}]. 
\]
This completes the proof of (i). 
\smallskip

\noindent Next, we prove (iii).
Fix any $\lambda>m_0$ and $\delta>0$ and let  
 $v^\varepsilon\in \Lip(\overbar\varOmega)$ be the solution of
\[
L_\varepsilon v^\varepsilon=0 \ \text{ in } \ \varOmega \ \ \ \text{and} \ \ \ 
v^\varepsilon=g \  \text{ on } \ \partial\varOmega.
\]
Theorem \ref{thm:sp2} yields $r=r(\delta)>0$ such that
\[
\lim_{\varepsilon\to 0+} (v^\varepsilon-g_0-\delta)_+=0 \ \ \ \text{ uniformly on }\ B_r.
\]
Set $N=\{x\in\partial\varOmega\mid g(x)<g_0+\delta\}$ and note that 
\[
\lim_{\varepsilon\to 0+} (v^\varepsilon-g_0-\delta)_+=0 \ \ \ \text{ uniformly on }\ B_r\cup N.
\]
Hence, by Theorem \ref{thm:ac2}, for any compact subset $K$  of $ \varOmega\cup N$,
\[
\lim_{\varepsilon\to 0+}(v^\varepsilon-g_0-\delta)_+=0 \ \  \ \text{ uniformly on } \ K.
\]
Now consider  
\[
w^\varepsilon(x,t):=u^\varepsilon(x,t)-v^\varepsilon(x) \ \ \ \text{ for } \ (x,t)\in \overbar Q.
\]
and note that 
\[
w^\varepsilon=0 \  \text{ on  }  \ \partial\varOmega\times[0,\,\infty).
\]
Then Theorem \ref{thm:lts1} yields  
\[
\lim_{\varepsilon\to 0+}w^\varepsilon=0 \ \ \ \text{ uniformly on }\ \overbar\varOmega\times[\e^{\lambda/\varepsilon},\,\infty). 
\]
Thus, we conclude that, for any compact subset $K$ of $ \varOmega\cup N$,  
\[
\lim_{\varepsilon\to 0+}(u^\varepsilon-g_0-\delta)_+=0 \ \ \ \text{ uniformly on }\ 
K\times[\e^{\lambda/\varepsilon},\,\infty),
\]
which completes the proof of (iii). 
\smallskip

\noindent To prove (ii) fix any $\delta>0$, and,   as in the proof of (i), choose 
 $r=r(\delta,g)>0$ and $\gamma>0$ such that  
\[
\lim_{\varepsilon\to 0+}(u^\varepsilon-g(0)-\delta)_+=0 
\ \ \ \text{ uniformly on } \ B_r\times [0,\,\e^{\gamma/\varepsilon}]. 
\]
Moreover, as in the proof of (iii), we 
set $N=\{x\in\partial\varOmega: g(x)-g(0)-\delta<0\}$ and use Theorem \ref{thm:ac2} to find 
$T=T(\delta,g)>0$ such that, 
for any compact subset $K$ of $\varOmega\cup N$,
\[
\lim_{\varepsilon\to 0+}(u^\varepsilon(\cdot,T)-g(0)-\delta)_+=0 \ \  \ \text{ uniformly on }\ K. 
\] 
We choose now $\lambda>m_0$ such that 
\ $\{V\leq\lambda\} \subset \varOmega\cup N$. 
It follows that there exists $\varepsilon_0>0$ such  that, if $\varepsilon\in(0,\,\varepsilon_0)$, then  
\[
u^\varepsilon(\cdot,T)-g(0)-2\delta\leq 0 \ \text{ on  } \ \{V\leq\lambda\}. 
\] 
Fix a $\mu\in(m_0,\,\lambda)$ and select  $G\in C(\overbar\varOmega)$ as in proof of (iii) 
so that 
\[
G=0 \ \text{ in } \ \{V\leq\mu\}, \ 
G\geq 0 \ \text{ on } \ \overbar\varOmega, \ \text{and} \
G\geq  \max_{\overbar\varOmega}(u^\varepsilon(\cdot,T)-g(0)-2\delta) \ \text{ in  } \ \{V>\lambda\}.
\]
Let $U^\varepsilon\in C(\overbar Q)\cap C^{2,1}(Q)$ be the solution of 
\eqref{eq:int1} with $U^\varepsilon=G$ on $\partial_{\mathrm{p}} Q$. It follows from the maximum principle that  
\ $u^\varepsilon(x,t+T)-g(0)-2\delta\leq U^\varepsilon(x,t)$ \ for all $(x,t)\in\overbar Q$, 
and $\{V\leq\mu\}\subset \{G\leq 0\}$. Combining Theorem \ref{thm:sts1} and Theorem \ref{thm:ac2},
as in the proof of (i), we deduce that 
there exists $T_1=T_1(\delta,G)>0$ such that,  for any 
compact subset $K$ of $ \varOmega\cup N$,  
\[
\lim_{\varepsilon\to 0+}(U^\varepsilon-\delta)_+=0 \ \ \ \text{ uniformly on }\ K\times[T_1,\,\e^{\mu/\varepsilon}-T].
\] 
Note that $\argmin(V|\partial\varOmega)\subset \{V\leq \mu\}$ and, hence, 
$G=0=G(0)$ on $\argmin(V|\partial\varOmega)$. Using assertion (iii), 
we see that, for any compact subset $K$ of $\varOmega\,\cup\,\argmin(V|\partial\varOmega)$,
\[
\lim_{\varepsilon\to 0+}(U^\varepsilon-\delta)_+=0 \ \ \ \text{ uniformly on } \ K\times[\e^{\mu/\varepsilon},\,\infty). 
\]
Combining these two observations, we conclude that, for any 
compact subset $K$ of $ \varOmega\cup \argmin(V|\partial\varOmega)$,  
\[
\lim_{\varepsilon\to 0+}(u^\varepsilon-g(0)-3\delta)_+=0 \ \ \ \text{ uniformly on }\ K\times[T+T_1,\,\infty). 
\] 
The proof is now complete. 
\end{proof}


\section{A semilinear parabolic equation}\label{sec:spe}

\noindent For  $f\in C(\overbar\varOmega\times\R\times\R^n)$  
we consider here the semilinear parabolic equation 
\begin{equation} \label{eq:spe1}
u_t^\varepsilon=L_\varepsilon u^\varepsilon+f_\varepsilon(x,u^\varepsilon,Du^\varepsilon)
\ \text{ in }Q.
\end{equation}

\noindent Throughout this section, in addition to (A1)--(A5), 
we make the following hypothesis. 
\begin{itemize}
\item[(A6)] %
For each $\varepsilon>0$ there exists $M(\varepsilon)>0$ 
such that, for all  
$(x,u,p)\in \overbar\varOmega\times\R\times\R^n$,  
$$
|f_\varepsilon(x,u,p)|\leq M(\varepsilon)|p|,  \  \lim_{\varepsilon\to 0+}M(\varepsilon)=0, \ \text{ and} \  u\mapsto f_\varepsilon(x,u,p) \  \text{ is nonincreasing. }
$$
%
\end{itemize} 

\noindent Note that it is immediate from (A6) that,  for all $(x,u) \in  \overbar\varOmega\times\R$, $f_\varepsilon(x,u,0)=0$.
\smallskip

\noindent In what follows, for $\phi\in C^2(\varOmega)$, we set
\[ \L_\varepsilon\phi :=L_\varepsilon\phi+f_\varepsilon(x,\phi,D\phi) \quad  \text{and} \quad    
\L_\varepsilon^+\phi:=L_\varepsilon\phi+M(\varepsilon)|D\phi|,     \]
and we remark that any sub-solution $u^\varepsilon$ of \eqref{eq:spe1} is 
also a sub-solution of $\,u^\varepsilon_t=\L_\varepsilon^+ u^\varepsilon\,$ in $\,Q$. 
\smallskip

\noindent It is possible to deal with \eqref{eq:spe1} with the nonlinear term 
$f_\varepsilon$ which depends further on the second derivatives in $x$ 
of $u^\varepsilon$, but, to make the presentation simple and to 
avoid technicalities, we restrict ourselves here to study the semilinear pde 
\eqref{eq:spe1}.  
 

\begin{thm} \label{thm:spe1}
Assume (A1)-(A6). The assertions of Theorem \ref{thm:mt1} hold for the solution
$u^\varepsilon\in C(\overbar Q)\cap C^{2,1}(Q)$ of \eqref{eq:spe1} satisfying, for $g\in C(\overbar\varOmega)$,  the initial-boundary value condition \eqref{eq:boundary}.
\end{thm}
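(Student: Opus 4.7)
The plan is to revisit the four technical pillars on which the proof of Theorem~\ref{thm:mt1} rests---Theorems~\ref{thm:sts1}, \ref{thm:lts1}, \ref{thm:sp1} and \ref{thm:ac2}---and show that each continues to hold for \eqref{eq:spe1} under (A6). Once this is done, the argument of Section~\ref{sec:pmt} goes through verbatim. The key observation supplied by (A6) is that any viscosity solution of \eqref{eq:spe1} is simultaneously a sub-solution of $u_t=\L_\varepsilon^+ u$ and a super-solution of $u_t=\L_\varepsilon^- u:=L_\varepsilon u-M(\varepsilon)|Du|$, while the monotonicity of $u\mapsto f_\varepsilon$ supplies the comparison principle needed for the barrier arguments (the one established in the Appendix).

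For the short-time scale, I would keep the barrier $v^\varepsilon(x)=\exp((W_r(x)-\mu)/\varepsilon)$ from Theorem~\ref{thm:qp2}. A direct computation yields
$$
\L_\varepsilon^+ v^\varepsilon \leq \frac{v^\varepsilon}{\varepsilon}\bigl(H(x,DW_r)+\varepsilon\tr[aD^2 W_r]+M(\varepsilon)|DW_r|\bigr),
$$
and since $|DW_r|$ is bounded while $M(\varepsilon)\to 0$, this expression is still strictly negative in $\varOmega\setminus B_r$ for $\varepsilon$ small, so the Kamin-type barrier $w^\varepsilon=v^\varepsilon+R_\varepsilon t$ remains a super-solution of $w_t=\L_\varepsilon^+ w$ and hence dominates any solution of \eqref{eq:spe1} from above; the proof of Theorem~\ref{thm:sts1} then proceeds without change. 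For the long-time scale, the function $v^\varepsilon=\exp(-W/\varepsilon)$ built from the semiconcave $W$ of Proposition~\ref{prop:lts1} satisfies, in the viscosity sub-solution sense,
$$
\L_\varepsilon^- v^\varepsilon \geq \frac{v^\varepsilon}{\varepsilon}\Bigl(\eta-\frac{\varepsilon}{\eta}-M(\varepsilon)\|DW\|_{\infty,\varOmega}\Bigr)\geq\frac{\eta v^\varepsilon}{2\varepsilon}
$$
for $\varepsilon$ small, so the time-dependent barrier of Theorem~\ref{thm:lts1} still works after harmless changes of constants. The stationary bound of Theorem~\ref{thm:sp1} then follows automatically, since its proof merely recombines these two scale estimates via the maximum principle.

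For the asymptotic-constancy Theorem~\ref{thm:ac2}, the only substantive check is that the upper relaxed limit $U$ of $u^\varepsilon_+$ remains a sub-solution of $U_t=b\cdot DU$. This is immediate: at a test-function maximum, the extra term $f_\varepsilon(x,u^\varepsilon,D\phi)$ is controlled by $M(\varepsilon)|D\phi|$, which vanishes as $\varepsilon\to 0$ for fixed $\phi$. The cone-distance barrier $\phi=d^2$ used in that proof needs only an extra $\varepsilon$-correction of the form $\varepsilon(C+M(\varepsilon)\|D\phi\|_\infty)t$ to remain a super-solution of the semilinear operator. With these four ingredients in place, the three assertions of Theorem~\ref{thm:pmt1} follow by the very same chain of maximum-principle comparisons as in Section~\ref{sec:pmt}. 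The principal difficulty is therefore organisational rather than conceptual: one must verify systematically that every barrier of Sections~\ref{sec:sts}--\ref{sec:ac} absorbs the $M(\varepsilon)|Du|$ perturbation with no qualitatively new construction, and that the Appendix's comparison principle applies at each step where the linear parabolic maximum principle was used before.
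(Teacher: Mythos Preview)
Your proposal is correct and mirrors the paper's own argument: the paper likewise verifies semilinear analogs of Theorems~\ref{thm:sts1}, \ref{thm:lts1}, \ref{thm:sp2}, and \ref{thm:ac2} by absorbing the extra $M(\varepsilon)|DW|$-term into the existing barrier inequalities, reduces to a one-sided statement via the symmetry $u^\varepsilon\mapsto -u^\varepsilon$, and then reruns Section~\ref{sec:pmt}. The only point the paper spells out more carefully than you do is the stationary step: since the equation is nonlinear, the difference $v^\varepsilon-u^\varepsilon$ used in the proof of Theorem~\ref{thm:sp1} is no longer automatically a solution, so the paper replaces the auxiliary problems by their $\L_\varepsilon^+$ versions and proves a short lemma (that a sub-solution minus a super-solution of $u_t=\L_\varepsilon^+ u$ is again a sub-solution), which is precisely the kind of verification you anticipate in your final sentence.
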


\noindent It is not clear to the authors whether the initial-boundary value problem \eqref{eq:spe1}, \eqref{eq:boundary} has a classical solution in $C(\overbar Q)\cap C^{2,1}(Q)$. 
It is, hence,  worthwhile stating an existence and uniqueness result for viscosity solutions  
of \eqref{eq:spe1}, \eqref{eq:boundary}. 
For this  we may replace (A1) by the weaker assumption: 
\begin{itemize}
\item[(A1w)] $a$ is H\"older continuous on $\overbar\varOmega$ with exponent $\gamma>1/2$ and $b$ is continuous  
on $\overbar\varOmega$. 
\end{itemize}

\noindent We have:

\begin{thm} \label{thm:spe2} Under assumptions (A1w), (A2), (A3) and (A6) 
there exists a unique viscosity solution $u^\varepsilon\in C(\overbar Q)$ of \eqref{eq:spe1}, 
\eqref{eq:boundary}. 
\end{thm}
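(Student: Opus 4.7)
My plan for proving Theorem~\ref{thm:spe2} is to derive a parabolic comparison principle for viscosity sub- and super-solutions of \eqref{eq:spe1}, \eqref{eq:boundary} and then to produce a solution by Perron's method. For the comparison step, given a sub-solution $u\in\USC(\overbar Q)$ and a super-solution $v\in\LSC(\overbar Q)$ with $u\leq v$ on $\partial_{\mathrm{p}}Q$, I would argue by contradiction: assuming $\sup_Q(u-v)>0$, replace $u$ by $u-\mu t$ for small $\mu>0$ to push any maximum into a bounded time strip, and double variables in both $x$ and $t$ through a penalty
\[
\Phi(x,y,t,s)=u(x,t)-v(y,s)-\varphi_\alpha(x-y)-\frac{(t-s)^2}{2\beta}-\mu t.
\]
The parabolic Ishii--Crandall matrix lemma at an interior maximizer $(x_\alpha,y_\alpha,t_\alpha,s_\alpha)$ supplies $X,Y\in\bbS^n$ controlled by $D^2\varphi_\alpha$, and subtracting the viscosity inequalities gives
\[
\mu\leq \varepsilon\tr[a(x_\alpha)X-a(y_\alpha)Y]+\bigl(b(x_\alpha)-b(y_\alpha)\bigr)\cdot D\varphi_\alpha+\Delta f,
\]
where $\Delta f$ collects the difference of the two $f_\varepsilon$-values. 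The monotonicity of $f_\varepsilon$ in $u$ from (A6) signs the $u$-vs-$v$ part of $\Delta f$ favorably, while $|f_\varepsilon|\leq M(\varepsilon)|p|$ and the continuity of $b$ bound the first-order residues. The trace term is handled via $\tr[a(x)X-a(y)Y]\leq \tr\bigl[(\sigma(x)-\sigma(y))(\sigma(x)-\sigma(y))^{\mathrm{T}}\Lambda_\alpha\bigr]$ with $\sigma=a^{1/2}$; uniform ellipticity (A2) and (A1w) make $\sigma$ H\"older of exponent $\gamma>1/2$, and a $\varphi_\alpha$ tuned to $\gamma$ drives this error to zero as $\alpha\to 0^+$. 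Sending $\alpha,\beta\to 0$ and then $\mu\to 0$ yields the contradiction.

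For existence, once comparison is available, I would run Perron's method after building global sub- and super-solutions $\underline{u},\overbar{u}\in C(\overbar Q)$ of \eqref{eq:spe1} satisfying $\underline{u}=\overbar{u}=g$ on $\partial_{\mathrm{p}}Q$. Natural candidates are $\overbar{u}(x,t)=G(x)+A(\rho(x)+t)$ and $\underline{u}(x,t)=G(x)-A(\rho(x)+t)$, where $G\in C^2(\overbar\varOmega)$ is a smooth extension of $g$ and $\rho\in C^2(\overbar\varOmega)$ is a smoothing of $\dist(\cdot,\partial\varOmega)$ near the boundary, available from the $C^1$-regularity in (A3). Uniform ellipticity (A2) and the sublinear bound (A6) allow $A$ to be chosen so large that $\overbar{u}$ and $\underline{u}$ are respectively super- and sub-solutions on some short initial interval $[0,T_0]$, and the construction iterates in time. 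The Perron candidate
\[
u^\varepsilon(x,t):=\sup\bigl\{w(x,t)\mid \underline{u}\leq w\leq \overbar{u},\ w\text{ viscosity sub-solution of \eqref{eq:spe1}}\bigr\}
\]
has an upper semicontinuous envelope that is a sub-solution and a lower semicontinuous envelope that is a super-solution; both coincide with $g$ on $\partial_{\mathrm{p}}Q$ by the barriers, and hence coincide on $\overbar Q$ by the comparison step, producing $u^\varepsilon\in C(\overbar Q)$.

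The delicate step is the comparison argument under the weakened regularity (A1w): a naive quadratic penalty $|x-y|^2/(2\alpha)$ leaves a trace error of order $\alpha^{\gamma-1}$, which diverges for $\gamma<1$, so squeezing it to zero requires exploiting $\gamma>1/2$ in an essential way---for instance by taking $\varphi_\alpha$ to involve a higher power of $|x-y|$, or by a refined test-vector choice in Ishii's matrix inequality---so that the $1/2$ threshold is precisely what makes the resulting bound vanish in the limit. Once comparison is in place, the remaining ingredients (Perron's construction and the barrier estimates yielding boundary continuity) are standard.
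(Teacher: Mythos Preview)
Your comparison outline is essentially what the paper invokes: it cites the parabolic version of Ishii--Lions (Theorem~III.1 in \cite{IL90}), and your remark that the H\"older exponent $\gamma>1/2$ is exactly what makes the second-order error vanish after choosing a penalty $\varphi_\alpha$ with a suitable higher power is the right mechanism. So the uniqueness half is fine.

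The existence half, however, has a genuine gap in the barrier construction. First, the data $g$ is only assumed to lie in $C(\overbar\varOmega)$, so there is no smooth extension $G\in C^2(\overbar\varOmega)$ in general; your barriers $G\pm A(\rho+t)$ are unavailable. Second, (A3) gives only a $C^1$ boundary, so the distance function is $C^1$ but not $C^2$ near $\partial\varOmega$, and there is no $C^2$ defining function $\rho$ with the properties you need. Third, even granting $G$ and $\rho$, your candidate $\overbar u(x,t)=G(x)+A(\rho(x)+t)$ satisfies $\overbar u(y,t)=g(y)+At>g(y)$ for $y\in\partial\varOmega$ and $t>0$, so it does not pinch down to $g$ on the lateral boundary; Perron's method would then not yield continuous attainment of the lateral data, and the iteration-in-time trick does not repair this.

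The paper handles all three issues by building two separate families of barriers and taking infima. For the lateral boundary it uses, for each $y\in\partial\varOmega$ and small $\lambda>0$, an exterior point $z=y+\lambda\nu(y)$ (whose existence and distance from $\varOmega$ are controlled by the $C^1$ assumption via Lemma~\ref{lem:spe1}) and sets $v_{\mathrm b}(x;y,\lambda)=1-\exp\bigl(-\alpha(|x-z|^2-\delta(\lambda)^2)\bigr)$ with $\alpha=\alpha(\lambda)$ chosen, through the uniform ellipticity, so that $\L_\varepsilon v_{\mathrm b}\leq 0$. These exponential barriers require no $C^2$ structure and are compared to $g$ only through its modulus of continuity $\omega$: one takes $w_{\mathrm b}(x)=\inf_{y,\lambda}\{g(y)+\omega(3\lambda)+Av_{\mathrm b}(x;y,\lambda)\}$. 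For the initial boundary the paper uses $v_{\mathrm i}(x;y,\gamma)=g(y)+B(\gamma)|x-y|^2+\omega(\gamma)$ plus a linear-in-$t$ correction, again taking an infimum over $y$ and $\gamma$. The minimum of the two envelopes gives a super-solution equal to $g$ on all of $\partial_{\mathrm p}Q$, with a symmetric construction for the sub-solution; Perron then closes. In short, the key missing idea in your proposal is to replace the unavailable smooth global barrier by infima of elementary local barriers indexed by boundary points and a scale parameter, using only the modulus of continuity of $g$ and an exterior-point property of $C^1$ domains.
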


\noindent  We present the proof of Theorem \ref {thm:spe2}, which is rather long and technical, in the Appendix. Here we continue with 
Theorem \ref{thm:spe1}, which actually holds also for viscosity solutions of \eqref{eq:spe1}, 
\eqref{eq:boundary}.  Indeed we have:

\begin{thm} \label{thm:spe3} Assume (A1)--(A6) and $g\in C(\overbar\varOmega)$. 
The assertions of Theorem \ref{thm:spe1} hold for the (viscosity) solution
$u^\varepsilon\in C(\overbar Q)$ of \eqref{eq:spe1}, \eqref{eq:boundary}. 
\end{thm}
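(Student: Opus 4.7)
The plan is to mimic the linear proof section by section, using the observation that if $u^\varepsilon$ is a viscosity sub-solution (resp. super-solution) of \eqref{eq:spe1}, then, since $|f_\varepsilon(x,u,p)|\le M(\varepsilon)|p|$, it is also a sub-solution (resp. super-solution) of the bracketing equation
\[ u^\varepsilon_t = L_\varepsilon u^\varepsilon + M(\varepsilon)|Du^\varepsilon| \quad (\text{resp. } u^\varepsilon_t = L_\varepsilon u^\varepsilon - M(\varepsilon)|Du^\varepsilon|) \quad \text{in } Q. \]
Existence and uniqueness of $u^\varepsilon\in C(\overbar Q)$ are granted by Theorem \ref{thm:spe2}, and the monotonicity-in-$u$ assumption from (A6) together with (A1)--(A3) yields the comparison principle for \eqref{eq:spe1} in its standard form. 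The strategy is then to take each of Theorems \ref{thm:sts1}, \ref{thm:lts1}, \ref{thm:sp1}, and \ref{thm:ac2}, verify that the barrier constructions used in their proofs remain valid when $L_\varepsilon$ is replaced by $\L_\varepsilon^+\phi := L_\varepsilon\phi + M(\varepsilon)|D\phi|$, and then repeat verbatim the bootstrap of Theorem \ref{thm:pmt1}.

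The barriers in Sections \ref{sec:sts} and \ref{sec:lts} are exponentials of the form $v^\varepsilon = \exp((W_r-\mu)/\varepsilon)$ or $\exp(-W/\varepsilon)$, whose gradients satisfy $|Dv^\varepsilon| = \tfrac{v^\varepsilon}{\varepsilon}|DW|$ with $|DW|$ bounded independently of $\varepsilon$. Thus the extra contribution to $\L_\varepsilon^+ v^\varepsilon$ is at most $\tfrac{v^\varepsilon}{\varepsilon}M(\varepsilon)\|DW\|_{\infty,\varOmega}$, which, inside the leading bracket $\tfrac{v^\varepsilon}{\varepsilon}\bigl(H(x,\pm DW) + O(\varepsilon)\bigr)$ that appears in those proofs, is only an $O(M(\varepsilon))$ correction. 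Because $M(\varepsilon)\to 0$ by (A6) and the sign inequalities from Theorem \ref{thm:qp2} and Proposition \ref{prop:lts1} are strict through the constant $\eta>0$, they are preserved for all sufficiently small $\varepsilon$, and the same super-solution estimates are obtained for $\L_\varepsilon^+$. The Dirichlet result (Theorem \ref{thm:sp1}) and its localization (Theorem \ref{thm:ac3}) then follow as in the linear case from the corresponding comparison principle for the semilinear stationary problem.

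The remaining step is the stability argument of Section \ref{sec:ac}. Lemma \ref{lem:ac1} asserts that the upper relaxed limit $U$ of $\{u^\varepsilon_+\}_{\varepsilon>0}$ is a viscosity sub-solution of $U_t = b\cdot DU$, and in the semilinear setting this still holds by the standard half-relaxed limits method: at any test $\phi\in C^2$ realizing a local maximum of $U-\phi$, the $\varepsilon$-level inequality
\[ \phi_t \leq \varepsilon\tr[aD^2\phi] + b\cdot D\phi + f_\varepsilon(x,u^\varepsilon,D\phi) \]
has both the viscous term and the nonlinear term bounded by $\varepsilon\|a\|_\infty\|D^2\phi\|_\infty + M(\varepsilon)\|D\phi\|_\infty \to 0$. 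The barrier $\psi(x,t)=\delta + A\phi(x,t) + \varepsilon ACt$ in the proof of Theorem \ref{thm:ac2} is upgraded by replacing the constant $C$ with $C + M(\varepsilon)\|D\phi\|_{\infty,\overbar B_\rho(y,s)}$, which is $O(1)$ uniformly in $\varepsilon$, so that $\psi_t \geq \L_\varepsilon^+\psi$ on $Q(y,s)$ for all small $\varepsilon$, and comparison concludes as before.

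Once these four ingredients are in place, the bootstrap proof of Theorem \ref{thm:pmt1} transfers mechanically. I expect the most delicate point to be the stability step of Section \ref{sec:ac}, where one must keep careful track that every Lipschitz-in-$p$ constant entering the various barriers is genuinely independent of $\varepsilon$, so that the $O(M(\varepsilon))$ perturbations produced by $f_\varepsilon$ truly vanish in the limit $\varepsilon\to 0^+$.
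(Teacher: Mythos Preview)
Your proposal is correct and follows essentially the same route as the paper: bracket $f_\varepsilon$ by $\pm M(\varepsilon)|Du|$, pass to the operator $\L_\varepsilon^+ = L_\varepsilon + M(\varepsilon)|D\cdot|$, verify that the exponential barriers of Sections~\ref{sec:sts} and~\ref{sec:lts} absorb the extra term $\tfrac{v^\varepsilon}{\varepsilon}M(\varepsilon)\|DW\|_\infty$ into the strict $\eta$-margins, adjust $\psi$ in Section~\ref{sec:ac}, and rerun the bootstrap of Theorem~\ref{thm:pmt1} in its one-sided form.

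There is, however, one step you treat too lightly. In the proof of Theorem~\ref{thm:sp1} (and again in part (iii) of Theorem~\ref{thm:pmt1}) one forms the difference $w^\varepsilon = v^\varepsilon - u^\varepsilon$ of a stationary and a parabolic solution and uses that $w^\varepsilon$ is again a solution of the parabolic equation; this is linearity, not a comparison principle. For the semilinear operator $\L_\varepsilon$ this fails in general. What makes the argument go through is that $\L_\varepsilon^+$ is \emph{convex} (indeed sublinear) in the gradient, so that if $v$ is a sub-solution and $u$ a super-solution of $w_t=\L_\varepsilon^+ w$, then $v-u$ is again a sub-solution of the same equation. The paper isolates this fact as Lemma~\ref{lem:spe2} and invokes it exactly at these subtraction steps; your sentence ``follow as in the linear case from the corresponding comparison principle'' does not cover it. Once you insert this convexity observation (and take the auxiliary functions $u^\varepsilon$, $v^\varepsilon$ in those proofs to be solutions of the $\L_\varepsilon^+$ problem rather than of \eqref{eq:spe1}), your outline is complete and matches the paper's.
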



\noindent In view of the facts that, for any $\varepsilon>0$,  
$-f_\varepsilon(x,-u,-p)$ satisfies condition (A6) if $f_\varepsilon$ does and, if $u^\varepsilon\in C(Q)$ is a solution of \eqref{eq:spe1} then 
$v^\varepsilon:=-u^\varepsilon$ is a  solution of 
\[
v_t^\varepsilon=L_\varepsilon v^\varepsilon-f_\varepsilon(x,-v^\varepsilon,-D_xv^\varepsilon)\quad\text{ in }\ Q,
\]
Theorem \ref{thm:spe2} is an easy consequence 
of the following version of 
Theorem \ref{thm:pmt1}.

\begin{thm} \label{thm:spe4}
Assume (A1)--(A6) and $g \in C(\overbar \varOmega)$. For each $\varepsilon>0$, let $u^\varepsilon\in C(\overbar Q)$ be a sub-solution of 
\eqref{eq:spe1}, \eqref{eq:boundary}. Fix $\delta>0$. 
\begin{enumerate}
\item[(i)] There exists $T=T(\delta,g)>0$ such that, for any 
$\lambda\in(0,\,m_0)$ and any compact subset $K$ of $ \varOmega$, 
\[
\lim_{\varepsilon\to 0+}\left(u^\varepsilon-g(0)-\delta\right)_+=0 
\quad \text{ uniformly on }\  K\times[T,\,\e^{\lambda/\varepsilon}].   
\]
\item[(ii)] Assume that $g=g(0)$ on $\argmin(V|\partial\varOmega)$. 
There exists a constant $T=T(\delta,g)>0$ such that,  for any  
compact subset  $K$ of $\varOmega\,\cup\,\argmin(V|\partial\varOmega)$,
\[
\lim_{\varepsilon\to 0+}\left(u^\varepsilon-g(0)-\delta\right)_{+}=0 
\quad \text{ uniformly on } K\times[T,\,\infty). 
\]
\item[(iii)]Assume that $g=g_0$ on $\argmin(V|\partial\varOmega)$ for some 
$g_0\in\R$. Then, for any $\lambda\in(m_0,\,\infty)$ and any compact subset $K$ of $\varOmega\,\cup\,\argmin(V|\partial\varOmega)$, 
\[
\lim_{\varepsilon\to 0+}\left(u^\varepsilon-g_0-\delta\right)_+=0 
\quad\text{ uniformly on } \ K\times[\e^{\lambda/\varepsilon},\,\infty). 
\]
\end{enumerate}
\end{thm}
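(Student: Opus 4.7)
The plan is to reduce the semilinear setting to the linear framework already developed by exploiting the bound $|f_\varepsilon(x,u,p)| \le M(\varepsilon)|p|$ from (A6). Any sub-solution $u^\varepsilon$ of \eqref{eq:spe1} satisfies, in the viscosity sense,
\[
u^\varepsilon_t \le L_\varepsilon u^\varepsilon + M(\varepsilon)|Du^\varepsilon| = \mathcal{L}_\varepsilon^+ u^\varepsilon.
\]
Hence it suffices to produce, at each step of the proof of Theorem \ref{thm:pmt1}, smooth super-solutions of the quasilinear operator $\mathcal{L}_\varepsilon^+$ of essentially the same form as in the linear case, and then invoke the comparison principle between sub-solutions of $u^\varepsilon_t \le \mathcal{L}_\varepsilon^+ u^\varepsilon$ and classical super-solutions --- this comparison is standard because $\mathcal{L}_\varepsilon^+$ is Lipschitz in the gradient with no dependence on the unknown, so the usual doubling-variables argument applies regardless of the monotonicity part of (A6).

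For the smaller time scale analog of Theorem \ref{thm:sts1}, I would keep $v^\varepsilon(x) := \exp((W_r(x)-\mu)/\varepsilon)$ with $W_r \in C^2(\overbar\varOmega)$ from Theorem \ref{thm:qp2}. Since $|Dv^\varepsilon| = v^\varepsilon|DW_r|/\varepsilon$, one has in $\varOmega\setminus B_r$
\[
\mathcal{L}_\varepsilon^+ v^\varepsilon = \frac{v^\varepsilon}{\varepsilon}\bigl(H(x,DW_r) + \varepsilon\tr[aD^2 W_r] + M(\varepsilon)|DW_r|\bigr) \le \frac{v^\varepsilon}{\varepsilon}\bigl(-\eta + \varepsilon C + M(\varepsilon)\|DW_r\|_{\infty,\varOmega}\bigr).
\]
Because $M(\varepsilon) \to 0$, for $\varepsilon$ small the right-hand side is $<0$ off $B_r$ and bounded above by $2v^\varepsilon/\varepsilon$ on $B_r$; taking $w^\varepsilon := v^\varepsilon + R_\varepsilon t$ with the same $R_\varepsilon$ gives a super-solution of $w^\varepsilon_t \ge \mathcal{L}_\varepsilon^+ w^\varepsilon$, and the rest of Theorem \ref{thm:sts1} goes through. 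Similarly, for the larger time scale analog of Theorem \ref{thm:lts1}, take $W$ from Proposition \ref{prop:lts1} and $v^\varepsilon := \exp(-W/\varepsilon)$. The extra contribution from the subtracted $M(\varepsilon)|Dv^\varepsilon|$ is of order $M(\varepsilon)\|DW\|_{\infty,\varOmega} v^\varepsilon/\varepsilon$ and is absorbed by the lower bound $\eta v^\varepsilon/(2\varepsilon)$ once $\varepsilon$ is small enough, yielding $L_\varepsilon v^\varepsilon - M(\varepsilon)|Dv^\varepsilon| \ge \eta v^\varepsilon/(4\varepsilon)$; the same barrier $w^\varepsilon = 1 + \e^{-\delta/\varepsilon} - v^\varepsilon - (\gamma/2\varepsilon)\e^{-\mu/\varepsilon}t$ then does the job.

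For the asymptotic constancy results of Section \ref{sec:ac}, I would consider the upper relaxed semilimit $U$ of $(u^\varepsilon_+)$ as before. Passing to the limit in the sub-solution inequality $u^\varepsilon_t \le \mathcal{L}_\varepsilon^+ u^\varepsilon$ and using $M(\varepsilon)\to 0$, the standard stability argument shows that $U$ is a sub-solution of $U_t = b\cdot DU$ in $Q$, so Lemma \ref{lem:ac1} and the cone-comparison step of Theorem \ref{thm:ac2} apply verbatim --- the extra term $M(\varepsilon)|D\phi|$ in the smooth local barrier $\delta + A\phi + \varepsilon ACt$ only adds an $o(1)$ contribution which is harmless. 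The stationary result Theorem \ref{thm:sp2} and its consequences in Section \ref{sec:sp} are handled by the same adaptations. With all these analogs in place, the proof of Theorem \ref{thm:pmt1} can be transcribed essentially line by line, always interpreting inequalities in the sub-solution sense for $\mathcal{L}_\varepsilon^+$, to yield parts (i)--(iii) of Theorem \ref{thm:spe4}.

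The main obstacle will be a careful bookkeeping of the smallness of each $M(\varepsilon)$-error: in every super-solution construction we must check that $\varepsilon_0$ can be chosen small enough, depending on $\eta$ and on the Lipschitz (or $C^1$) bounds of the fixed functions $W_r$, $W$, $\phi$, etc., so that the additional $M(\varepsilon)|D\phi|$ contribution is strictly dominated by the preexisting negative (respectively positive) term. A secondary but routine point is verifying that the comparison principle used against these smooth barriers covers sub-solutions of $u^\varepsilon_t \le L_\varepsilon u^\varepsilon + M(\varepsilon)|Du^\varepsilon|$ in $C(\overbar Q)$, which follows from the fact that the added term is Lipschitz in $p$ and has no $u$-dependence, hence the nonincreasing-in-$u$ hypothesis of (A6) is needed only for the existence/uniqueness theory of Theorem \ref{thm:spe2}, not for the asymptotic analysis here.
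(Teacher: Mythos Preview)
Your outline matches the paper's approach closely: pass to sub-solutions of $u_t^\varepsilon\le \L_\varepsilon^+ u^\varepsilon$, absorb the extra $M(\varepsilon)|DW|$ term into the barrier constructions (this is exactly what the paper does for the analogues of Theorems \ref{thm:sts1} and \ref{thm:lts1}, stated as Theorems \ref{thm:spe5} and \ref{thm:spe6}), and observe that $M(\varepsilon)\to 0$ lets the relaxed-limit and cone-comparison arguments of Section \ref{sec:ac} go through essentially unchanged.

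There is, however, one step that is \emph{not} a line-by-line transcription and that you gloss over. In the proof of part (iii) of Theorem \ref{thm:pmt1} one writes $w^\varepsilon=u^\varepsilon-v^\varepsilon$, with $v^\varepsilon$ the stationary solution, and applies the large-time-scale result to $w^\varepsilon$; likewise the proof of the stationary Theorem \ref{thm:sp1} forms $w^\varepsilon=v^\varepsilon-\delta-u^\varepsilon$. In the linear case these differences are automatically solutions, but for $\L_\varepsilon^+$ the operator is nonlinear in the gradient, and it is not immediate in the viscosity sense that (sub-solution) $-$ (super-solution) is again a sub-solution. The paper isolates this as Lemma \ref{lem:spe2} and proves it by a local comparison/contradiction argument using the convexity of $p\mapsto M(\varepsilon)|p|$. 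Your phrase ``handled by the same adaptations'' and ``transcribed essentially line by line'' hides exactly this point; you should state and justify the difference lemma explicitly, since without it the applications of the large-time-scale and small-time-scale results to $w^\varepsilon$ in parts (iii) and (ii), and in the stationary step, are unjustified.
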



\noindent The proof of Theorem \ref{thm:spe4} parallels  that of Theorem \ref{thm:pmt1}. 
Instead of giving the detailed proof, we indicate here 
its major differences from that of Theorem \ref{thm:pmt1}.  
\smallskip


\noindent  Choose $r$, 
$W_r$, $\mu$, $\eta$ and $C$ as those at the beginning of Section \ref{sec:sts}, let $v^\varepsilon\in C^2(\overbar\varOmega)$ be the function defined by \eqref{eq:sts1a}   and observe that 
\[
f_\varepsilon(x,v^\varepsilon,D_xv^\varepsilon)
\leq M(\varepsilon)|D_xv^\varepsilon|
\leq \frac{v^\varepsilon}{\varepsilon} M(\varepsilon)|DW_r|
\]
and 
\[
\L_\varepsilon v^\varepsilon
\leq \frac{v^\varepsilon}{\varepsilon}\left(H(x,DW_r)+\varepsilon C+M(\varepsilon)\|DW_r\|_{\infty,\varOmega}
\right). 
\]
Select $\varepsilon_0>0$ so that, for all $\varepsilon\in(0,\,\varepsilon_0)$,
\[
\varepsilon C+M(\varepsilon)\|DW_r\|_{\infty,\varOmega}\leq 1\wedge \eta,
\]
and observe that, for any $\varepsilon\in(0,\,\varepsilon_0)$,
\[
\L_\varepsilon v^\varepsilon(x)
\leq
\begin{cases}
0&\text{ in } \ \varOmega\setminus B_r,\\[3pt]\displaystyle 
\frac{2v^\varepsilon}{\varepsilon}&\text{ in } \ B_r. 
\end{cases}
\]
The function $w^\varepsilon\in C^2(\overbar Q)$ defined by  
$\,w^\varepsilon(x,t):=v^\varepsilon(x)+R_\varepsilon t$, 
with $\,R_\varepsilon:=(2/\varepsilon)\|v^\varepsilon\|_{\infty,B_r}\,$ satisfies
$\,w_t^\varepsilon\geq \L_\varepsilon w^\varepsilon\,$  in  $\, Q.$
\smallskip

\noindent The next assertion (Theorem \ref{thm:spe5}) is similar to Theorem \ref{thm:sts1}. 
Its proof follows by  a straightforward adaptation of the proof  of Theorem 
\ref{thm:sts1} with the above choice of function $w^\varepsilon$. 

\begin{thm}\label{thm:spe5} For each $\varepsilon>0$ 
let $u^\varepsilon\in\USC(\overbar Q)$ be a sub-solution of \eqref{eq:spe1},  
\eqref{eq:boundary} and let 
$\lambda>0$ be such that \ $\{V\leq\lambda\}\subset\{g\leq 0\}$. 
For any $\delta>0$ there exists $r>0$ such that 
\[
\lim_{\varepsilon\to 0+}(u^\varepsilon-\delta)_+=0 \quad\text{ uniformly on } \ 
B_r\times [0,\,\e^{\lambda/\varepsilon}].
\]
\end{thm}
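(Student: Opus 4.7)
The plan is to mirror the proof of Theorem \ref{thm:sts1}, using the classical super-solution $w^\varepsilon=v^\varepsilon+R_\varepsilon t$ just constructed (for which $w^\varepsilon_t\geq\L_\varepsilon w^\varepsilon$ in $Q$) in place of the linear super-solution used there, and invoking viscosity comparison for \eqref{eq:spe1} where previously the classical parabolic maximum principle sufficed.

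First I would reduce the problem to a boundary comparison. Fix $\delta>0$. By the hypothesis $\{V\leq\lambda\}\subset\{g\leq 0\}$ together with the continuity of $V$ and $g$, one can choose $\gamma>\lambda$ with $\{V\leq\gamma\}\subset\{g<\delta\}$. Assumption (A6) gives $f_\varepsilon(x,u,0)\equiv 0$, so every constant is a viscosity super-solution of \eqref{eq:spe1}; the comparison principle developed in the Appendix for Theorem \ref{thm:spe2} thus yields the a priori bound $u^\varepsilon\leq M:=\|g\|_{\infty,\varOmega}$ on $\overbar Q$.

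Next I would arrange $w^\varepsilon\geq u^\varepsilon-\delta$ on $\partial_{\mathrm{p}}Q$. Choose $\mu\in(\lambda,\gamma)$ in \eqref{eq:sts1a} and $r>0$ small enough that $\gamma-r-\mu>0$. Since $\|V-W_r\|_{\infty,\varOmega}<r$, on the closed set $\{g\geq\delta\}\subset\{V>\gamma\}$ one has $W_r>\gamma-r$, hence
\[
v^\varepsilon>\exp\left(\frac{\gamma-r-\mu}{\varepsilon}\right)>M\quad\text{on}\ \{g\geq\delta\}
\]
for every sufficiently small $\varepsilon>0$; on $\{g<\delta\}$ one clearly has $v^\varepsilon>0\geq g-\delta$. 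These two facts give $w^\varepsilon\geq g-\delta=u^\varepsilon-\delta$ on $\partial_{\mathrm{p}}Q$. A second application of viscosity comparison, now between the sub-solution $u^\varepsilon-\delta$ of \eqref{eq:spe1} (which inherits the sub-solution property from $u^\varepsilon$ by the monotonicity in $u$ demanded in (A6)) and the smooth super-solution $w^\varepsilon$, then yields $u^\varepsilon-\delta\leq w^\varepsilon$ throughout $\overbar Q$.

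The final estimate on $B_r\times[0,\e^{\lambda/\varepsilon}]$ is identical to that in the proof of Theorem \ref{thm:sts1}. Since $V(0)=0$, $V\in\Lip(\overbar\varOmega)$ and $\|V-W_r\|_{\infty,\varOmega}<r$, there is a $C_0>0$ with $|W_r|\leq C_0 r$ on $B_r$. After shrinking $r$ so that $\tilde\mu:=\mu-C_0 r>\lambda$, one obtains
\[
w^\varepsilon(x,t)\leq \e^{-\tilde\mu/\varepsilon}+\frac{2}{\varepsilon}\,\e^{(\lambda-\tilde\mu)/\varepsilon}\longrightarrow 0
\]
uniformly on $B_r\times[0,\e^{\lambda/\varepsilon}]$, which combined with $u^\varepsilon-\delta\leq w^\varepsilon$ completes the proof. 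The only genuinely new ingredient beyond Theorem \ref{thm:sts1} is viscosity comparison between a merely upper-semicontinuous sub-solution of \eqref{eq:spe1} and the smooth super-solution $w^\varepsilon$; this is precisely what the Appendix establishes in the course of proving Theorem \ref{thm:spe2}, so no additional analytical work is required at this stage.
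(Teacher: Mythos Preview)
Your proposal is correct and follows essentially the same route as the paper: the paper states that the proof is ``a straightforward adaptation of the proof of Theorem \ref{thm:sts1} with the above choice of function $w^\varepsilon$,'' and that is precisely what you carry out. Your two added observations---that the a~priori bound and the comparison step must now be justified via the viscosity comparison principle from the Appendix, and that the monotonicity in (A6) is what makes $u^\varepsilon-\delta$ again a sub-solution---are exactly the minor adaptations the paper leaves implicit.
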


\noindent Theorem \ref{thm:lts1} can be reformulated for sub-solutions of \eqref{eq:spe1} as follows. 

\begin{thm}\label{thm:spe6} Fix $\lambda>m_0$ and, for each $\varepsilon>0$, 
let $u^\varepsilon\in\USC(\overbar  Q)$ be a sub-solution 
of \eqref{eq:spe1}. Assume that
$u^\varepsilon\leq 0 \ \text{ on } \ \partial\varOmega\times [0,\,\infty) \ \text{ and } \sup_{\varepsilon>0} \|u^\varepsilon\|_{\infty, \overbar Q}<\infty.$
Then 
\[
\lim_{\varepsilon\to 0+}u^\varepsilon_+=0 \quad\text{ uniformly on }\ \overbar\varOmega\times[\e^{\lambda/\varepsilon},\,\infty). 
\]
\end{thm}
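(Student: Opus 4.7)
The plan is to imitate the proof of Theorem \ref{thm:lts1} as closely as possible, with two modifications dictated by the semilinear setting: (i) we only have a sub-solution, so the argument is intrinsically one-sided, and (ii) we must absorb the nonlinear term $f_\varepsilon(x,u^\varepsilon,Du^\varepsilon)$ into the barrier computation. After normalizing so that $\sup_\varepsilon\|u^\varepsilon\|_{\infty,\overbar Q}\leq 1$, it suffices to prove the conclusion as stated, since $u^\varepsilon$ need not be a solution and we cannot symmetrize by replacing $u^\varepsilon$ with $-u^\varepsilon$.

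First I would upgrade $u^\varepsilon$ to a sub-solution of a more convenient equation. By (A6), $f_\varepsilon(x,r,p)\leq M(\varepsilon)|p|$, so any sub-solution $u^\varepsilon$ of \eqref{eq:spe1} is automatically a viscosity sub-solution of
\[
u^\varepsilon_t \leq L_\varepsilon u^\varepsilon+M(\varepsilon)|Du^\varepsilon| \quad\text{in } Q.
\]
This removes the dependence on $u^\varepsilon$ itself from the nonlinearity and places us in a standard framework for which a parabolic viscosity comparison principle applies.

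Next, given $\lambda>m_0$, I would invoke Proposition \ref{prop:lts1} to obtain the Lipschitz semiconcave function $W$ and the constant $\eta>0$, set $\delta:=\min_{\overbar\varOmega}W>0$ and $\mu:=\max_{\partial\varOmega}W<\lambda$, and define, for $\varepsilon$ small,
\[
v^\varepsilon(x):=\exp\bigl(-W(x)/\varepsilon\bigr).
\]
The exact computation from the proof of Theorem \ref{thm:lts1}, using the super-solution inequalities $H(x,-DW)\geq\eta$ and $\eta\tr[aD^2W]\leq 1$, yields $L_\varepsilon v^\varepsilon \geq (v^\varepsilon/\varepsilon)(\eta-\varepsilon/\eta)$. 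Formally $|Dv^\varepsilon|=(v^\varepsilon/\varepsilon)|DW|$, so the nonlinear term contributes at most $M(\varepsilon)(v^\varepsilon/\varepsilon)\|DW\|_{\infty,\varOmega}$, and
\[
L_\varepsilon v^\varepsilon-M(\varepsilon)|Dv^\varepsilon|\;\geq\;\frac{v^\varepsilon}{\varepsilon}\Bigl(\eta-\frac{\varepsilon}{\eta}-M(\varepsilon)\|DW\|_{\infty,\varOmega}\Bigr)\;\geq\;\frac{\eta v^\varepsilon}{2\varepsilon}
\]
for all sufficiently small $\varepsilon$, since both $\varepsilon/\eta$ and $M(\varepsilon)$ vanish as $\varepsilon\to 0+$. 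For $\gamma\in(0,\eta]$ set
\[
w^\varepsilon(x,t):=1+\e^{-\delta/\varepsilon}-v^\varepsilon(x)-\frac{\gamma}{2\varepsilon}\e^{-\mu/\varepsilon}\,t.
\]
Using the displayed lower bound and $v^\varepsilon\geq\e^{-\mu/\varepsilon}$, one verifies that $w^\varepsilon$ is a viscosity super-solution of $w^\varepsilon_t\geq L_\varepsilon w^\varepsilon+M(\varepsilon)|Dw^\varepsilon|$ in $Q$. On $\overbar\varOmega\times\{0\}$ one has $w^\varepsilon\geq 1\geq u^\varepsilon$, and on $\partial\varOmega\times[0,T]$, for $T:=(2\varepsilon/\gamma)\e^{\mu/\varepsilon}$, one has $w^\varepsilon\geq 0\geq u^\varepsilon$. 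Comparison therefore yields $u^\varepsilon\leq w^\varepsilon$ on $\overbar\varOmega\times[0,T]$, hence $u^\varepsilon(\cdot,T)\leq\e^{-\delta/\varepsilon}$. Letting $\gamma\downarrow 0$ covers the whole range $\overbar\varOmega\times[(2\varepsilon/\eta)\e^{\mu/\varepsilon},\infty)$, and since $\mu<\lambda$ and $\delta>0$, this gives $u^\varepsilon_+\to 0$ uniformly on $\overbar\varOmega\times[\e^{\lambda/\varepsilon},\infty)$, as required.

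The main obstacle I anticipate is the rigorous verification of the viscosity super-solution inequality for $v^\varepsilon$ (hence for $w^\varepsilon$) with the extra gradient term $M(\varepsilon)|Dv^\varepsilon|$, since $W$ is only Lipschitz and semiconcave and the chain-rule manipulation is purely formal. As in the proof of Theorem \ref{thm:lts1}, one unpacks the super-solution definition at a test point and uses the semiconcavity of $W$ to bound $D^2W$ from above and its Lipschitz bound to control $|DW|$; the crucial quantitative input is the smallness of $M(\varepsilon)$, which is precisely what allows the extra term to be absorbed into the $\eta v^\varepsilon/\varepsilon$ already produced by the linear part. Apart from this technical book-keeping the argument is a direct transcription of the linear proof.
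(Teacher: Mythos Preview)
Your proposal is correct and follows essentially the same route as the paper: normalize, pass from $\L_\varepsilon$ to $\L_\varepsilon^+$ using $f_\varepsilon(x,u,p)\leq M(\varepsilon)|p|$, take the same $W$, $\eta$, $v^\varepsilon$ and $w^\varepsilon$ as in the proof of Theorem~\ref{thm:lts1}, and absorb the extra gradient term $M(\varepsilon)\|DW\|_{\infty,\varOmega}$ into the $\eta$-gap so that $L_\varepsilon v^\varepsilon-M(\varepsilon)|Dv^\varepsilon|\geq \tfrac{\eta v^\varepsilon}{2\varepsilon}$ for small $\varepsilon$; the rest is the identical comparison argument. The only cosmetic difference is that the paper states the key lower bound directly for $\L_\varepsilon v^\varepsilon$ rather than for $L_\varepsilon v^\varepsilon-M(\varepsilon)|Dv^\varepsilon|$, but the computation and the use of it are the same.
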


\noindent Let $W$, $\eta$, $\delta$, $\mu$ and $v^\varepsilon$ be as in the proof Theorem \ref{thm:lts1}. 
We deduce, following the arguments in the proof of Theorem \ref{thm:lts1}, that, in the sub-solution sense,
\[
\L_\varepsilon v^\varepsilon
\geq \frac{v^\varepsilon}{\varepsilon}\Big(
-\frac{\varepsilon}{\eta}
+\eta
-M(\varepsilon)\|DW\|_{\infty,\,\varOmega}\Big) \  \text{ in } \varOmega.
\]
Fix $\varepsilon_0>0$ so that, for all $\varepsilon\in(0,\,\varepsilon_0)$, 
\[
\frac {\varepsilon}\eta+M(\varepsilon)\|DW\|_{\infty,\,\varOmega}<\frac \eta 2, 
\ \ \ 
\text{ and, hence, }
\ \ \ \L_\varepsilon v^\varepsilon\geq\frac{\eta v^\varepsilon}{2\varepsilon} \quad\text{ in }\varOmega. 
\] 
Define $w^\varepsilon\in\Lip(\overbar Q)$ as in the proof of Theorem \ref{thm:lts1}, that is, for $\gamma\in(0,\,\eta]$,  set  
\[
w^\varepsilon(x,t):=1+\e^{-\delta/\varepsilon}-v^\varepsilon(x)-\frac{\gamma}{2\varepsilon}\e^{-\mu/\varepsilon}\,t,
\]
and then 
follow the proof of Theorem \ref{thm:lts1} with 
$w^\varepsilon$ above, 
to conclude the proof of Theorem \ref{thm:spe6}. 
\smallskip

\noindent A review of the proof of Theorem \ref{thm:ac2} shows 
that, with a minor modification of the function $\psi$, 
the assertion of Theorem 
\ref{thm:ac2} holds true  for sub-solutions $u^\varepsilon\in \USC(\overbar Q)$ of \eqref{eq:spe1}. To prove the first claim of Theorem \ref{thm:spe4}, we just need to follow the proof of part (i) of Theorem \ref{thm:pmt1}, with Theorem 
\ref{thm:sts1} replaced by Theorem \ref{thm:spe5} 
and with Theorem \ref{thm:ac2} replaced by the corresponding assertion 
for sub-solutions $u^\varepsilon\in \USC(\overbar Q)$ of \eqref{eq:spe1}. 
\smallskip

\noindent Now we discuss a version of Theorem \ref{thm:sp2} for sub-solutions of 
\begin{equation}\label{eq:spe4}
\begin{cases}
\L_\varepsilon v^\varepsilon=0&\text{ in }\ \varOmega,\\[3pt]
v^\varepsilon=g &\text{ on }\ \partial\varOmega,
\end{cases}
\end{equation} 
with  $g\in C(\overbar\varOmega)$. The existence and uniqueness of a solution in $C(\overbar \varOmega)$ 
of \eqref{eq:spe4} follow similarly  to the case of 
Theorem \ref{thm:spe1}. 
\smallskip

\noindent Following the proof of Theorem \ref{thm:sp1} we obtain: 

\begin{thm}
\label{thm:spe7} For each $\varepsilon>0$ let $v^\varepsilon\in\USC(\overbar Q)$
 be a sub-solution of \eqref{eq:spe4}. Assume that 
 $g\leq g_0$  on $\argmin(V|\partial\varOmega)$ for some constant $g_0$. Then, for any $\delta>0$, 
 there exists $r>0$ such that 
 \[
 \lim_{\varepsilon\to 0+}(v^\varepsilon-g_0-\delta)_+=0 \ \ \ \text{ uniformly on }\ B_r.
 \]
 \end{thm}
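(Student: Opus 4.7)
My plan is to mimic the proof of Theorem \ref{thm:sp1}, with the linear ingredients (Theorems \ref{thm:lts1} and \ref{thm:sts1}) replaced by their sub-solution semilinear analogs (Theorems \ref{thm:spe6} and \ref{thm:spe5}).

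\emph{Reduction and setup.} First, by the substitution $v^\varepsilon\mapsto v^\varepsilon-g_0$, which remains a sub-solution of a semilinear elliptic equation with the shifted nonlinearity $\tilde f_\varepsilon(x,u,p):=f_\varepsilon(x,u+g_0,p)$ still satisfying (A6), and with the shifted boundary bound $g-g_0\le 0$ on $\argmin(V|\partial\varOmega)$, we reduce to the case $g_0=0$. Fix $\delta>0$. Following the setup of the proof of Theorem \ref{thm:sp1}, pick $m_0<\mu<\lambda$ and $h\in C(\overbar\varOmega)$ with $h=0$ on $\partial\varOmega$, $0\le h\le G_+$ where $G:=g-\delta/2$, and $h=G_+$ on $\{V\le\mu\}$, so that $\{V\le\mu\}\subset\{G-h\le 0\}$. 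Let $\bar u^\varepsilon\in C(\overbar Q)$ be the unique solution of \eqref{eq:spe1} with IBV data $h$ (Theorem \ref{thm:spe2}). Since $\bar u^\varepsilon=0$ on $\partial\varOmega$ and $\bar u^\varepsilon$ is uniformly bounded by $\|h\|_\infty$, Theorem \ref{thm:spe6} yields
\[
(\bar u^\varepsilon)_+\to 0\quad\text{uniformly on}\ \overbar\varOmega\times[\e^{\mu/\varepsilon},\infty).
\]

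\emph{Main step.} Set $w^\varepsilon(x,t):=v^\varepsilon(x)-\delta/2-\bar u^\varepsilon(x,t)$. The aim is to show that $(w^\varepsilon-\delta/2)_+\to 0$ uniformly on $B_r\times[0,\e^{\mu/\varepsilon}]$ for some $r>0$. Note that on $\partial_{\mathrm{p}} Q$ we have $w^\varepsilon\le G-h$, and $\{V\le\mu\}\subset\{G-h\le 0\}$, exactly as in the proof of Theorem \ref{thm:sp1}. The main obstacle, and the central technical difficulty, is that in the linear case $w^\varepsilon$ is automatically a solution of the linear parabolic equation, to which Theorem \ref{thm:sts1} applies, whereas in the semilinear case the nonlinearity $f_\varepsilon$ does not split across the subtraction, so $w^\varepsilon$ is not a priori a sub-solution of \eqref{eq:spe1}. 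To circumvent this, I propose to exploit the fact (noted after assumption (A6)) that any sub-solution of \eqref{eq:spe1} is a sub-solution of the relaxed equation $u_t=\L^+_\varepsilon u$ with $\L^+_\varepsilon u:=L_\varepsilon u+M(\varepsilon)|Du|$, together with $M(\varepsilon)\to 0$. A viscosity doubling-variables argument, using the monotonicity of $f_\varepsilon$ in $u$ and the bounds $|f_\varepsilon|\le M(\varepsilon)|p|$, shows that $w^\varepsilon$ satisfies, in the viscosity sub-solution sense, a perturbed inequality that the explicit super-solution $\phi^\varepsilon(x)+R_\varepsilon t$ from Section \ref{sec:sts} (with $\phi^\varepsilon=\exp((W_r-\mu)/\varepsilon)$ and $R_\varepsilon=2\|\phi^\varepsilon\|_{\infty,B_r}/\varepsilon$) still dominates: indeed, the additional $M(\varepsilon)\|DW_r\|_{\infty,\varOmega}$ correction arising from the relaxation gets absorbed into the strict inequality $H(x,DW_r)\le-\eta$ on $\varOmega\setminus B_r$ of Theorem \ref{thm:qp2}, exactly as in the proof of Theorem \ref{thm:spe5}. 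Comparison on $\partial_{\mathrm{p}} Q$ uses $\{V\le\mu\}\subset\{G-h\le 0\}$ together with the exponential blow-up of $\phi^\varepsilon$ in $\{V>\mu\}$, yielding $w^\varepsilon-\delta/2\le \phi^\varepsilon+R_\varepsilon t$ in $\overbar Q$; on $B_r\times[0,\e^{\mu/\varepsilon}]$ the right-hand side tends to zero as $\varepsilon\to 0+$ by the same exponential-smallness computation at the end of the proof of Theorem \ref{thm:sts1}.

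\emph{Conclusion.} From the pointwise bound
\[
(v^\varepsilon(x)-\delta)_+\le(\bar u^\varepsilon(x,\e^{\mu/\varepsilon}))_+ + (w^\varepsilon(x,\e^{\mu/\varepsilon})-\delta/2)_+,
\]
valid for all $x\in\varOmega$, and the two convergences just established, we deduce $(v^\varepsilon-\delta)_+\to 0$ uniformly on $B_r$, which completes the proof.
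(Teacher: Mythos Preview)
Your overall strategy—mimic the proof of Theorem \ref{thm:sp1}, replacing Theorems \ref{thm:sts1} and \ref{thm:lts1} by their semilinear analogues Theorems \ref{thm:spe5} and \ref{thm:spe6}—is exactly what the paper does. However, there is a genuine gap in your ``Main step'', and it stems from your choice of auxiliary function $\bar u^\varepsilon$.

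You take $\bar u^\varepsilon$ to be the solution of the \emph{original} semilinear problem \eqref{eq:spe1} with data $h$, and then claim that a doubling-of-variables argument, using only $|f_\varepsilon(x,u,p)|\le M(\varepsilon)|p|$ and monotonicity in $u$, yields a useful sub-solution inequality for $w^\varepsilon=v^\varepsilon-\delta/2-\bar u^\varepsilon$. This does not work under (A6) alone. In the doubling argument the penalized gradients $p,q$ in the super/sub jets satisfy $p-q\approx D\psi$, but individually $|p|,|q|$ contain the term $\alpha|\hat x-\hat y|$, which is \emph{not} controlled as $\alpha\to\infty$ (only $\alpha|\hat x-\hat y|^2\to 0$ is available). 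Since (A6) gives no Lipschitz bound on $p\mapsto f_\varepsilon(x,u,p)$, the best you can say is
\[
f_\varepsilon(\hat x,v^\varepsilon,p)-f_\varepsilon(\hat y,\bar u^\varepsilon,q)\le M(\varepsilon)\bigl(|p|+|q|\bigr),
\]
and the right-hand side blows up with the penalization. Equivalently: relaxing to $\L_\varepsilon^+$ preserves \emph{sub}-solutions but not \emph{super}-solutions, so your $\bar u^\varepsilon$ is only a sub-solution of $u_t=\L_\varepsilon^+u$, and you cannot invoke a ``sub minus super is sub'' principle for it.

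The paper avoids this by taking $u^\varepsilon$ to be the solution of the \emph{relaxed} equation $u_t=\L_\varepsilon^+u$ with data $h$ (this equation also satisfies (A6), with $f_\varepsilon^+(x,u,p)=M(\varepsilon)|p|$). Now $v^\varepsilon$, viewed in $Q$, is a sub-solution of $v_t=\L_\varepsilon^+v$ (since $\L_\varepsilon v^\varepsilon\ge 0$ implies $\L_\varepsilon^+v^\varepsilon\ge 0$), and $u^\varepsilon$ is a \emph{super}-solution of that same relaxed equation. The convexity of $p\mapsto M(\varepsilon)|p|$ then gives Lemma \ref{lem:spe2}: $w^\varepsilon=v^\varepsilon-u^\varepsilon$ is a sub-solution of $w_t=\L_\varepsilon^+w$. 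From there Theorems \ref{thm:spe5} and \ref{thm:spe6} apply exactly as you outline, and your conclusion step goes through unchanged. In short, the only correction needed is to let the auxiliary parabolic function solve $u_t=\L_\varepsilon^+u$ rather than \eqref{eq:spe1}; with that change your proof becomes essentially the paper's.
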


\noindent When following the proof of Theorem \ref{thm:sp1}, 
one needs to replace $G$ and $u^\varepsilon$, respectively, by 
the function $G(x)=g(x)-g_0-\delta$ and the solution $u^\varepsilon\in C(\overbar Q)$ of 
\begin{equation}\label{eq:spe5}
u_t^\varepsilon=\L_\varepsilon^+ u^\varepsilon\ \text{ in }\ Q  
\ \text{ with} \ u^\varepsilon=h  \ \text{ on }\ \partial_{\mathrm{p}} Q,
\end{equation}
where $h\in C(\overbar \varOmega)$ is chosen as in the proof of Theorem \ref{thm:sp1} with the present 
choice of $G$. Once it is shown that $w^\varepsilon:=v^\varepsilon-u^\varepsilon$ is a sub-solution of 
\eqref{eq:spe5}, the rest of the argument goes exactly as in the proof 
of Theorem \ref{thm:sp1}.  Thus, the following lemma completes the proof of Theorem \ref{thm:spe7}.

\begin{lem}\label{lem:spe2} 
For a given $\varepsilon>0$ let $v\in\USC(Q)$ and $u\in\LSC(Q)$ be respectively a sub-solution and a 
super-solution of \eqref{eq:spe5}. Then  $w:=v-u$ is a
sub-solution of \eqref{eq:spe5}. 
\end{lem}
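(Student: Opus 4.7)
The plan is to apply the standard doubling-of-variables technique together with the parabolic Crandall--Ishii lemma, exploiting the fact that the nonlinearity $p \mapsto M(\varepsilon)|p|$ is convex and positively homogeneous. The formal (smooth-function) identity underlying the lemma is
$$w_t - L_\varepsilon w = (v-u)_t - L_\varepsilon(v-u) \leq M(\varepsilon)(|Dv| - |Du|) \leq M(\varepsilon)|D(v-u)| = M(\varepsilon)|Dw|,$$
where the crucial inequality is the triangle inequality $|Dv| - |Du| \leq |Dv - Du|$.

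To make this rigorous, I fix $\phi \in C^2(Q)$ and a point $(\hat x, \hat t) \in Q$ at which $v - u - \phi$ attains a strict local maximum, and aim to prove
$$\phi_t(\hat x, \hat t) \leq L_\varepsilon \phi(\hat x, \hat t) + M(\varepsilon)|D\phi(\hat x, \hat t)|.$$
For $\alpha > 0$, I maximize
$$\Psi_\alpha(x,t,y,s) := v(x,t) - u(y,s) - \phi(x,t) - \tfrac{\alpha}{2}\bigl(|x-y|^2 + (t-s)^2\bigr)$$
over a fixed compact neighborhood of $(\hat x, \hat t, \hat x, \hat t)$ in $Q \times Q$, obtaining a maximum point $(x_\alpha, t_\alpha, y_\alpha, s_\alpha)$. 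Standard estimates yield $(x_\alpha, t_\alpha, y_\alpha, s_\alpha) \to (\hat x, \hat t, \hat x, \hat t)$ and $\alpha(|x_\alpha - y_\alpha|^2 + (t_\alpha - s_\alpha)^2) \to 0$ as $\alpha \to \infty$, so the maximum lies in the interior for $\alpha$ large.

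Next, the parabolic Crandall--Ishii lemma produces numbers $\tau_1, \tau_2$ with $\tau_1 - \tau_2 = \phi_t(x_\alpha, t_\alpha)$ and matrices $X, Y \in \bbS^n$ such that
$$\bigl(\tau_1,\, D\phi(x_\alpha, t_\alpha) + \alpha(x_\alpha - y_\alpha),\, X\bigr) \in \overline{\mathcal P}^{2,+} v(x_\alpha, t_\alpha),$$
$$\bigl(\tau_2,\, \alpha(x_\alpha - y_\alpha),\, Y\bigr) \in \overline{\mathcal P}^{2,-} u(y_\alpha, s_\alpha),$$
together with a matrix inequality of the form
$$\begin{pmatrix} X & 0 \\ 0 & -Y \end{pmatrix} \leq \begin{pmatrix} D^2\phi(x_\alpha,t_\alpha)+\alpha I & -\alpha I \\ -\alpha I & \alpha I \end{pmatrix} + o_\alpha(1).$$
Writing the sub- and super-solution inequalities for $v$ and $u$ at the respective points and subtracting (the $\alpha(t_\alpha - s_\alpha)$-terms cancel against each other) yields
$$\phi_t(x_\alpha, t_\alpha) \leq \operatorname{tr}[a(x_\alpha)X] - \operatorname{tr}[a(y_\alpha)Y] + b(x_\alpha) \cdot p_\alpha - b(y_\alpha) \cdot q_\alpha + M(\varepsilon)(|p_\alpha|-|q_\alpha|),$$
with $p_\alpha := D\phi(x_\alpha, t_\alpha) + \alpha(x_\alpha - y_\alpha)$ and $q_\alpha := \alpha(x_\alpha - y_\alpha)$.

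The first-order and nonlinear contributions are easily controlled: $|p_\alpha| - |q_\alpha| \leq |p_\alpha - q_\alpha| = |D\phi(x_\alpha, t_\alpha)|$, while
$$b(x_\alpha) \cdot p_\alpha - b(y_\alpha) \cdot q_\alpha = b(x_\alpha) \cdot D\phi(x_\alpha, t_\alpha) + \alpha(b(x_\alpha) - b(y_\alpha)) \cdot (x_\alpha - y_\alpha),$$
and the second summand vanishes in the limit via the Lipschitz bound $|b(x_\alpha) - b(y_\alpha)| \leq L|x_\alpha - y_\alpha|$ combined with $\alpha|x_\alpha - y_\alpha|^2 \to 0$. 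The main obstacle is the second-order difference $\operatorname{tr}[a(x_\alpha)X] - \operatorname{tr}[a(y_\alpha)Y]$. To handle it, I test the matrix inequality above against the vectors $(\sigma(x_\alpha) e_i,\, \sigma(y_\alpha) e_i)$ for an orthonormal basis $\{e_i\}$ of $\R^n$, where $\sigma = a^{1/2}$ is the Lipschitz square root guaranteed by (A1) and (A2); summing over $i$ yields
$$\operatorname{tr}[a(x_\alpha)X] - \operatorname{tr}[a(y_\alpha)Y] \leq \operatorname{tr}[a(x_\alpha)D^2\phi(x_\alpha, t_\alpha)] + \alpha\,\|\sigma(x_\alpha) - \sigma(y_\alpha)\|_F^2 + o_\alpha(1),$$
and the error tends to zero since $\alpha\|\sigma(x_\alpha) - \sigma(y_\alpha)\|_F^2 \leq C\alpha|x_\alpha - y_\alpha|^2 \to 0$. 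Assembling all estimates and sending $\alpha \to \infty$ produces the required viscosity sub-solution inequality for $w$ at $(\hat x, \hat t)$.
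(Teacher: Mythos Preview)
Your argument is correct in substance but takes a different route from the paper's.  The paper argues by contradiction: assuming $\phi_t>\L_\varepsilon^+\phi$ at the strict maximum point $(\hat x,\hat t)$ of $w-\phi$, the same strict inequality holds on a small cylinder $Q_r$, and then the subadditivity of $p\mapsto M(\varepsilon)|p|$ makes $v-\phi$ itself a sub-solution of \eqref{eq:spe5} on $Q_r$ (if $\psi$ touches $v-\phi$ from above then $\phi+\psi$ touches $v$ from above, and one subtracts the strict super-solution inequality for $\phi$).  The comparison principle already recorded at the start of the proof of Theorem~\ref{thm:spe2}, applied to the pair $(v-\phi,u)$ on $Q_r$, gives $\max_{\overbar Q_r}(v-u-\phi)=\max_{\partial_{\mathrm p}Q_r}(v-u-\phi)$, contradicting the strict interior maximum.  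Thus the paper hides all the Crandall--Ishii machinery inside the black box of comparison, whereas you redo that machinery by hand for this specific pair.

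Two small technical points in your write-up deserve tightening.  First, the factor $\varepsilon$ in $L_\varepsilon$ is missing in front of the trace terms in your displayed inequality.  Second, the Crandall--Ishii lemma does not produce a matrix inequality with an additive ``$o_\alpha(1)$''; it gives $A+\eta A^2$ for every $\eta>0$, and one must choose $\eta$ (say $\eta=1/\alpha$) and verify that testing the resulting matrix against $(\sigma(x_\alpha)e_i,\sigma(y_\alpha)e_i)$ still yields terms dominated by $C\alpha|x_\alpha-y_\alpha|^2$ plus bounded quantities.  Both are routine repairs, and your use of the triangle inequality $|p_\alpha|-|q_\alpha|\le|D\phi(x_\alpha,t_\alpha)|$ is exactly the point where the convexity of the nonlinearity enters, just as it does (through subadditivity) in the paper's shorter argument.
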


\begin{proof} Let $\phi\in C^2(Q)$ and $(\hat x,\hat t)\in Q$ be such that $w-\phi$ achieves 
a strict maximum at $(\hat x,\hat t)$. We need to show that  
$\,\phi_t\leq \L_\varepsilon^+\phi\,$ at $\,(\hat x,\hat t)$. 
\smallskip

\noindent We argue by contradiction and thus assume that this inequality does not hold. 
In this case we may choose $r>0$ so that $Q_r:=B_r(\hat x)\times(\hat t-r, \hat t+r) \subset Q$ and 
\[
\phi_t>\L_\varepsilon^+\phi  \ \text{ in  } \ Q_r:=B_r(\hat x)\times(\hat t-r,\,\hat t+r). 
\]

\noindent It is easily seen that $v-\phi$ is a sub-solution of \eqref{eq:spe5} in $Q_r$. Moreover, 
there is a  comparison between $v-\phi$ and $u$ (see the comparison 
principle at the beginning of the proof of Theorem \ref{thm:spe2} below), 
that is, we have
\[
\max_{\overbar{Q}_r}(v-u-\phi)\leq \max_{\partial_{\mathrm{p}} Q_r}(v-u-\phi), 
\] 
which is a contradiction since $w-\phi=v-u-\phi$ has a strict maximum at $(\hat x,\hat t)\in Q_r$. 
\end{proof} 
\smallskip

\noindent The proof of part (iii), (ii) of Theorem \ref{thm:spe4} follows as that of part (iii), (ii) 
of Theorem \ref{thm:pmt1} 
once $v^\varepsilon$ is chosen as the solution of 
\[
\L_\varepsilon^+ v^\varepsilon=0  \ \text{ in } \ \varOmega \ \ \text{ with } \ \ 
v^\varepsilon=g(x) \ \text{ on } \ \partial\varOmega,
\]
and Theorems \ref{thm:sp2}, \ref{thm:ac2}, \ref{thm:lts1} and 
\ref{thm:sts1} are replaced by those for sub-solutions of \eqref{eq:spe1} and \eqref{eq:spe4}.

\section{ Appendix: The well posedness of the semilinear problem}

\noindent 
We need the following lemma. Its proof is postponed for later.

\begin{lem}\label{lem:spe1}There exists a constant $\lambda_0>0$ such that, 
for all $\, y\in\partial\varOmega\,$  and $\,\lambda\in(0,\,\lambda_0),$ 
$y+\lambda \nu(y)\in\R^n\setminus \overbar\varOmega$. 
Moreover, if 
$\delta(\lambda):=\min_{y\in\partial\varOmega}\dist(y+\lambda\nu(y),\varOmega)$,  then $\lim_{\lambda\to 0+}  \delta(\lambda)/\lambda=1.$
\end{lem}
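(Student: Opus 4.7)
The plan is to deduce both claims from a uniform local graph representation of $\partial\varOmega$. By (A3), $\partial\varOmega$ is a compact $C^1$-hypersurface, so for each $y\in\partial\varOmega$ there exist, after a rigid motion placing $y$ at the origin with $\nu(y)=e_n$, a radius $\rho(y)>0$ and a function $\varphi_y\in C^1(\{z'\in\R^{n-1}: |z'|<\rho(y)\})$ with $\varphi_y(0)=0$ and $D\varphi_y(0)=0$, such that in these local coordinates $\partial\varOmega\cap B_{\rho(y)}(y)$ is the graph of $\varphi_y$ and $\varOmega\cap B_{\rho(y)}(y)=\{(z',z_n): z_n<\varphi_y(z')\}\cap B_{\rho(y)}(y)$. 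The first and main step would be to upgrade this to a uniform representation: for every $\epsilon\in(0,1)$ there is $r_\epsilon>0$, independent of $y\in\partial\varOmega$, such that $\varphi_y$ is defined on $\{|z'|<r_\epsilon\}$ and satisfies $|D\varphi_y(z')|\leq\epsilon$ throughout. This should follow by covering $\partial\varOmega$ with finitely many $C^1$-charts, using uniform continuity of the Gauss map $y\mapsto\nu(y)$ together with an implicit-function argument to construct $\varphi_{y}$ out of a fixed nearby chart, and then shrinking the radius so that the gradient bound is uniform, which is possible thanks to $D\varphi_y(0)=0$.

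Given this uniform representation, the first assertion is immediate: in the local coordinates at $y$, the point $y+\lambda\nu(y)$ becomes $(0,\lambda)$, while $\overbar\varOmega\cap B_{r_\epsilon}(y)$ is contained in $\{z_n\leq\varphi_y(z')\}$. Since $\varphi_y(0)=0<\lambda$, one has $(0,\lambda)\notin\overbar\varOmega\cap B_{r_\epsilon}(y)$ provided $0<\lambda<r_\epsilon$, and trivially $(0,\lambda)\notin\overbar\varOmega\setminus B_{r_\epsilon}(y)$ for $\lambda<r_\epsilon$, so setting $\lambda_0:=r_{1/2}$ gives the first statement.

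For the distance claim, the upper bound $\delta(\lambda)\leq\lambda$ is immediate since $\varOmega$ accumulates at $y$. For the lower bound, fix $\epsilon\in(0,1)$, take $\lambda<r_\epsilon/2$, and split $\varOmega$ into $\varOmega\cap B_{r_\epsilon}(y)$ and $\varOmega\setminus B_{r_\epsilon}(y)$. Any $x$ in the outer piece satisfies $|x-(y+\lambda\nu(y))|\geq r_\epsilon-\lambda$. For the inner piece, the inclusion $\varOmega\cap B_{r_\epsilon}(y)\subset\{z_n\leq\epsilon|z'|\}$, valid because $|\varphi_y(z')|\leq\epsilon|z'|$ on the disk, yields
\[
\dist\bigl(y+\lambda\nu(y),\,\varOmega\cap B_{r_\epsilon}(y)\bigr)\geq \dist\bigl((0,\lambda),\,\{z_n\leq\epsilon|z'|\}\bigr)=\frac{\lambda}{\sqrt{1+\epsilon^2}},
\]
the last equality by a one-variable minimization on the cone boundary $z_n=\epsilon|z'|$. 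Choosing $\lambda$ small enough that $r_\epsilon-\lambda>\lambda/\sqrt{1+\epsilon^2}$ gives $\delta(\lambda)\geq\lambda/\sqrt{1+\epsilon^2}$ uniformly in $y\in\partial\varOmega$, hence $\liminf_{\lambda\to 0+}\delta(\lambda)/\lambda\geq(1+\epsilon^2)^{-1/2}$; sending $\epsilon\to 0$ then gives the limit.

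The main obstacle is producing a graph-radius $r_\epsilon$ that is truly independent of $y\in\partial\varOmega$; once that compactness-based uniformization is in hand, everything else reduces to the observation that the boundary can be trapped between two cones of arbitrarily small opening, together with the elementary distance calculation for those cones.
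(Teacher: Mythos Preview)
Your argument is correct and complete in outline, but it takes a genuinely different route from the paper's proof. The paper works with a single global $C^1$ defining function $\rho$ for $\varOmega$: the first claim follows from the mean-value theorem applied to $\rho$ along the segment from $y$ to $y+\lambda\nu(y)$, and the limit $\delta(\lambda)/\lambda\to 1$ is obtained by contradiction, extracting a sequence $\lambda_j\to 0$ with $\delta(\lambda_j)/\lambda_j\leq\delta_0<1$, choosing minimizing points $y_j,\xi_j\in\partial\varOmega$, using that the closest boundary point $\xi_j$ to $y_j+\lambda_j\nu(y_j)$ satisfies $\xi_j+\delta(\lambda_j)\nu(\xi_j)=y_j+\lambda_j\nu(y_j)$, and equating the two mean-value expressions for $\rho$ at this common point to force $\delta(\lambda_j)/\lambda_j\to 1$.

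Your approach instead sets up, via compactness of $\partial\varOmega$ and continuity of the Gauss map, a \emph{uniform} tangent-graph representation with gradient bound $\epsilon$, traps $\varOmega$ near $y$ in the cone $\{z_n\leq\epsilon|z'|\}$, and reads off the explicit lower bound $\delta(\lambda)\geq\lambda/\sqrt{1+\epsilon^2}$. This is more constructive and yields a quantitative estimate that the paper's contradiction argument does not, at the cost of the uniformization step you flag as the main obstacle (which is indeed standard for compact $C^1$ hypersurfaces). The paper's proof avoids that step entirely by exploiting the defining function and the closest-point identity, making it shorter but less explicit.
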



\begin{proof}[Outline of the proof of Theorem \ref{thm:spe2}]  The  uniqueness follows from  the following comparison principle.
If $v\in \USC(\overbar Q)$ and $w\in\LSC(\overbar Q)$ are, respectively, 
a sub-solution and a super-solution of \eqref{eq:spe1} and 
$v\leq w$ on $\partial_{\mathrm{p}} Q$, then $u\leq w$ in $Q$. The comparison above 
is a special, parabolic version of (i) of Theorem III.1 in \cite{IL90}
and can be proved in the same spirit as the latter theorem.  
A useful comment here is that the proof of (i) of Theorem III.1 in \cite{IL90} works even when 
the constant $C_R$ in 
the assumption (3.2) there replaced by $C_R(1+|p|^\gamma)$, 
for some $\gamma\in(0,\,1)$.   
\smallskip

\noindent The existence of a solution follows from Perron's method provided we construct 
appropriate sub-solution and super-solution of 
\eqref{eq:spe1} in $Q$. 
\smallskip

\noindent To this end, let $\lambda_0 \in (0,1)$ and $\delta :(0,\,\lambda_0)\to (0,\,\lambda_0)$ be as in Lemma \ref{lem:spe1}. 
For each  $y\in\partial\varOmega$ and $\lambda\in(0,\,\lambda_0)$ 
set \ 
$z:=y+\lambda\nu(y)$ and, for $\alpha>0$  a constant which depends on $\lambda$ to be fixed later, define $u_{\mathrm{b}}, v_{\mathrm{b}}\in C^\infty(\overbar\varOmega)$ by 
\[
u_{\mathrm{b}}(x):=u_{\mathrm{b}}(x;y,\lambda):=\e^{-\alpha(|x-z|^2-\delta(\lambda)^2)} \quad\text{ and }\quad 
v_{\mathrm{b}}(x):=v_{\mathrm{b}}(x;y,\lambda):=1-u_{\mathrm{b}}(x),
\]
\smallskip

\noindent Observe that, if $d:=\diam(\varOmega)$, then  for all $x\in\overbar\varOmega$,  
\[
\delta(\lambda)\leq |x-z|\leq d+1.
\]
\noindent Next we estimate $\L_\varepsilon v_{\mathrm{b}}$ from above to find
\[
\begin{aligned}
\L_\varepsilon v_{\mathrm{b}}(x)&\,\leq u_{\mathrm{b}}(x)\big\{\varepsilon\left(2\alpha\tr a(x)-4\alpha^2 a(x)(x-z)\cdot (x-z)\right)+2\alpha M(\varepsilon)|x-z|
\\&\, \qquad 
+2\alpha|x-z|\|b\|_{\infty,\varOmega}\big\}
\\&\,\leq \alpha u_{\mathrm{b}}(x)\{\varepsilon\left(
2n\theta^{-1}-4\alpha\theta \delta(\lambda)^2\right)+2(d+1)(M(\varepsilon)+\|b\|_{\infty,\varOmega})\}. 
\end{aligned}
\]
Fix $\Lambda>0$ and $\alpha=\alpha(\lambda)>0$ so that 
\[
\varepsilon\left(
2n\theta^{-1}-4\theta \Lambda\right)
+2(d+1)\left(M(\varepsilon)+\|b\|_{\infty,\varOmega}\right)=0 \ \text{ and } \ \alpha\delta(\lambda)^2=\Lambda,
\] 
and note that, with this choice,  
\begin{equation}\label{eq:spe2}
\L_\varepsilon v_{\mathrm{b}}\leq 0 \  \text{ on } \  \overbar\varOmega. 
\end{equation}
We also observe that
\[
\begin{aligned}
v_{\mathrm{b}}(y)&\,=1-\exp\left(-\alpha\left(\lambda^2-\delta(\lambda)^2\right)\right)
=1-\exp\left(-\Lambda\left(\lambda^2/\delta(\lambda)^2-1\right)\right),\\ 
v_{\mathrm{b}}(x)&\,\geq 1-\exp\left(-\alpha\left(\delta(\lambda)^2-\delta(\lambda)^2\right)\right)=0 \ \ \text{ for all }\ x\in\overbar\varOmega, 
\end{aligned}
\]
and, for any $x\in\overbar\varOmega\setminus B_{3\lambda}(y)$, 
\[
\begin{aligned}
v_{\mathrm{b}}(x)&\,\geq 1-\exp\left(-\alpha\left((|x-y|-|y-z|)^2-\delta(\lambda)^2\right)\right)
\geq 1-\exp\left(-\alpha\left(4\lambda^2-\delta(\lambda)^2\right)\right)
\\&\,> 1-\exp\left(-\alpha\delta(\lambda)^2\right)=1-\e^{-\Lambda}.
\end{aligned}
\]
Lemma \ref{lem:spe1} together with the first observation above yields
\begin{equation}\label{eq:spe3}
\lim_{\lambda\to 0+}v_{\mathrm{b}}(y;y,\lambda)=0. 
\end{equation}

\noindent Next let $\omega$ denote the modulus of continuity of $g$, choose $A>0$ so that 
$A\left(1-\e^{-\Lambda}\right)>\omega(d),$
and observe that, for any $x\in\overbar\varOmega$ and $y\in\partial\varOmega$, 
\[
g(x)\leq g(y)+\omega(d)\leq g(y)+Av_{\mathrm{b}}(x;y,\lambda) \ \ \ \text{ if } \ x\not\in B_{3\lambda}(y),
\]
and 
\[
g(x)\leq g(y)+\omega(3\lambda) \ \ \ \text{ if }\ x\in B_{3\lambda}(y). 
\]
Hence, for all $x\in\overbar\varOmega$, 
\[
g(x)\leq g(y)+\omega(3\lambda)+Av_{\mathrm{b}}(x;y,\lambda). 
\]
Thus, setting, for $x\in \overbar\varOmega$,
\[
w_{\mathrm{b}}(x):=\inf\{g(y)+\omega(3\lambda)+A v_{\mathrm{b}}(x;y,\lambda): \lambda\in(0,\,\lambda_0),\, y\in\partial\varOmega\}
\]
and recalling \eqref{eq:spe2} and \eqref{eq:spe3}, we deduce that $w_{\mathrm{b}} \in \USC(\overbar\varOmega)$ is a  
super-solution of $\L_\varepsilon w_{\mathrm{b}}=0$ in $\varOmega$, $\,w_{\mathrm{b}}\geq g\,$ 
on $\,\overbar\varOmega\,$ 
and $\,w_{\mathrm{b}}=g\,$ on $\,\partial\varOmega$. 
\smallskip

\noindent Next, let $\gamma>0$, choose $B=B(\gamma)>0$ so that 
$B\gamma^2\geq\omega(d)$, for  
 $y\in\overbar\varOmega$, define $v_{\mathrm{i}}=v_{\mathrm{i}}(\cdot\,,y,\gamma)\in C^\infty(\overbar\varOmega)$
by
\[
v_{\mathrm{i}}(x):=g(y)+B|x-y|^2+\omega(\gamma), 
\]
and observe that 
\[
v_{\mathrm{i}}\geq g  \  \text{ on } \ \overbar \varOmega \  \text{ and } \  
v_{\mathrm{i}}(y)=g(y)+\omega(\gamma). 
\] 
Choose  $C(\gamma)>0$ so that 
\[ 
\L_\varepsilon v_{\mathrm{i}}\leq C(\gamma) \ \text{ in } \ \overbar \varOmega \ \text{ for all }  
v_{\mathrm{i}}=v_{\mathrm{i}}(\cdot\,;y,\gamma) \ \text{ with } \ y\in\overbar\varOmega,
\]
set, for  $(x,t)\in\overbar Q$, 
\[
w_{\mathrm{i}}(x,t):=\inf\{v_{\mathrm{i}}(x;y,\gamma)+C(\gamma)\,t \mid y\in\overbar\varOmega,\,\gamma>0\}, 
\]
and observe that $\,w_{\mathrm{i}} \in \USC(\overbar \varOmega)\,$ is a super-solution of \eqref{eq:spe1}, 
$\,g\leq w_{\mathrm{i}}\,$ on  $\,\overbar Q\,$ and 
$\,w_{\mathrm{i}}(\cdot,0)=g\,$ on $\,\overbar\varOmega$.
\smallskip

\noindent  Now,  for $(x,t)\in\overbar Q$, let
\[
w(x,t):=\min\{w_{\mathrm{b}}(x),\,w_{\mathrm{i}}(x,t)\}; 
\]
it is immediate that  $w\in\USC(\overbar Q)$ is a super-solution of \eqref{eq:spe1}, 
and, in addition, $w=g$ 
on $\partial_{\mathrm{p}} Q$ and $w \geq g $ on $\overbar Q$. 
\smallskip

\noindent Similarly, we can build a  sub-solution $z\in \LSC(\overbar Q)$ of \eqref{eq:spe1} such that 
 $z=g$ on $\partial_{\mathrm{p}} Q$
and $z\leq g$ on  $ \overbar Q$. Perron's method  together 
with the comparison claim mentioned at beginning of the ongoing proof yields 
a solution $u\in C(\overbar Q)$ of \eqref{eq:spe1} such that $z\leq u\leq w$ on $\overbar Q$. 
The last inequality implies that $u=g$ on  $\partial_{\mathrm{p}} Q$.  
\end{proof}

\noindent We present now the 
\begin{proof}[Proof of Lemma \ref{lem:spe1}] 
Let $\rho \in C^1(\R^n)$ be a defining function of $\varOmega$, that is  
\ $\varOmega=\{x\in\R^n: \rho(x)<0\}$ \ and  \ $D\rho\not=0$ if $\rho =0$; its existence is guaranteed by the assumed regularity of the 
boundary of $\varOmega$. 
\smallskip

\noindent Since,  for any $y\in\partial\varOmega$, there exists $\theta_0\in(0,\,1)$ such that
\[
\rho(y+\lambda\nu(y))=\lambda D\rho(y+\theta_0\lambda\nu(y))\cdot\nu(y)
\]
we may deduce that there exists $\lambda_0>0$ 
such that, for all  $y\in\partial\varOmega \ \text{ and} \ \lambda\in(0,\,\lambda_0)$,
\[
y+\lambda\nu(y)\in\R^n\setminus \overbar\varOmega.
\]

\noindent To show that $\lim_{\lambda\to 0+}\delta(\lambda)/\lambda=1$, we first note that $\delta(\lambda)\leq\lambda$ and  
assume by contradiction that $\liminf_{\lambda\to 0+}\delta(\lambda)/\lambda<1$.  It then follows that there exist $\delta_0\in(0,\,1)$ 
and a sequence $\{\lambda_j\}_{j \in \N} \subset (0,\,\lambda_0)$ such that, as $j \to \infty$,   $\lambda_j \to 0$ and 
$\delta(\lambda_j)/\lambda_j\leq\delta_0$ for all $j$. Moreover,
for each $j\in\N$ there are $y_j,\,\xi_j\in\partial\varOmega$ 
such that  
\[
\delta(\lambda_j)=|y_j+\lambda_j\nu(y_j)-\xi_j|. 
\]
We may assume by passing,   if needed,  to a subsequence,  that, as $j\to\infty$,  $y_j \to y_0$ 
for some $y_0\in\partial\varOmega$. It is then clear that $\xi_j\to y_0$ as $j\to\infty$. 
\smallskip

\noindent Since $\xi_j$ is a closest point of $\partial\varOmega$ to $y_j+\lambda_j\nu(y_j)$, we have 
\[
\xi_j+\delta(\lambda_j)\nu(\xi_j)=y_j+\lambda_j\nu(y_j). 
\]
Hence, noting that, for some $\theta_j,\,\tilde\theta_j\in(0,\,1)$,
\[\begin{cases}
\rho(y_j+\lambda_j\nu(y_j))=\lambda_j D\rho(y_j+\theta_j\lambda_j\nu(y_j))\cdot\nu(y_j),\\[3pt]
\rho(\xi_j+\delta(\lambda_j)\nu(\xi_j))=\delta(\lambda_j) D\rho(\xi_j+\tilde\theta_j\delta(\lambda_j)\nu(\xi_j))\cdot\nu(\xi_j),
\end{cases}
\]
we find 
\[
\lambda_j D\rho(y_j+\theta_j\lambda_j\nu(y_j))\cdot\nu(y_j)
=\delta(\lambda_j) D\rho(\xi_j+\tilde\theta_j\delta(\lambda_j)\nu(\xi_j))\cdot\nu(\xi_j),
\]
which, in the limit as $j\to\infty$, yields 
\[
\lim_{j\to\infty}\frac{\delta(\lambda_j)}{\lambda_j}
=\lim_{j\to\infty}\frac{D\rho(y_j+\theta_j\lambda_j\nu(y_j))\cdot\nu(y_j)}{D\rho(\xi_j+\tilde\theta_j\delta(\lambda_j)\nu(\xi_j))\cdot\nu(\xi_j)}
=1,
\]
a contradiction to the inequality $\,\delta(\lambda_j)/\lambda_j\leq \delta_0<1\,$ for $\,j\in\N$. 
\end{proof}

\bibliographystyle{plain}
\bibliography{metastability}

\bye